\documentclass[final,notitlepage,12pt,reqno]{amsart}
\usepackage{graphicx,extarrows}
\usepackage{calc}
\usepackage{url}

\makeatletter
\let\I\@undefined
\makeatother

\setlength{\emergencystretch}{0em}

\usepackage{geometry}
\geometry{body={17.5cm,24.5cm}}

\usepackage{enumitem}[2011/09/28]
\setenumerate{align=left, leftmargin=0pt,labelsep=.5em, labelindent=0\parindent,listparindent=\parindent,itemindent=*}
\usepackage{array}

\usepackage{caption}[2012/02/19]

\captionsetup[figure]{font=small}
\captionsetup[table]{font=small}
\usepackage{colonequals}

\usepackage{amssymb,amsmath}
\usepackage{amsfonts}

\usepackage[french,german,english]{babel}

\newcommand{\pL}{\text{\textit\L}}
\newcommand{\qoppa}{\kern-.1em\rotatebox{180}{\raisebox{-0.42em}{$'$}}\kern-.2em{o}}

\DeclareMathOperator{\Span}{span}
\DeclareMathOperator{\lcm}{lcm}
\DeclareMathOperator{\Li}{Li}

\DeclareMathOperator{\D}{d}
\DeclareMathOperator{\I}{Im}
\DeclareMathOperator{\RE}{Re}

\def\pd{\text{\textit\pounds}}


\def\eor{\hfill$ \square$}

\newcolumntype{L}{>{$}l<{$}}
\newcolumntype{C}{>{$}c<{$}}
\newcolumntype{R}{>{$}r<{$}}

\theoremstyle{plain}
\newtheorem{theorem}{Theorem}[section]

\newtheorem{corollary}[theorem]{Corollary}

\newenvironment{remark}[1][Remark]{\begin{trivlist}
\item[\hskip \labelsep {\bfseries #1}]}{\end{trivlist}}

\theoremstyle{definition}

\numberwithin{equation}{section}

\setcounter{tocdepth}{4}

\usepackage{Baskervaldx}
\usepackage[baskervaldx]{newtxmath}
\usepackage[cal=cm,scr=rsfs,frak=euler,bb=ams]{mathalfa}


\usepackage{color}

\def\l{\left}
\def\r{\right}
\def\bg{\bigg}
\def\({\bg(}
\def\){\bg)}
\def\t{\text}
\def\f{\frac}

\def\bi{\binom}

\begin{document}

\pagenumbering{roman}
\selectlanguage{english}
\title{Evaluations of $ \sum_{k=1}^\infty \frac{x^k}{k^2\binom{3k}{k}}$ and related series}
\author{Zhi-Wei Sun}\address{(Zhi-Wei Sun) Department of Mathematics, Nanjing
University, Nanjing 210093, People's Republic of China}
\email{{\tt zwsun@nju.edu.cn}
\newline\indent
{\it Homepage}: {\tt http://maths.nju.edu.cn/\lower0.5ex\hbox{\~{}}zwsun}}
 \author{Yajun Zhou
}
\address{(Yajun Zhou) Program in Applied and Computational Mathematics (PACM), Princeton University, Princeton, NJ 08544} \email{yajunz@math.princeton.edu}\curraddr{\textrm{} \textsc{Academy of Advanced Interdisciplinary Studies (AAIS), Peking University, Beijing 100871, P. R. China}}\email{yajun.zhou.1982@pku.edu.cn}
\date{\today}\thanks{\textit{Keywords}:  Binomial coefficients, harmonic numbers, cyclotomic multiple zeta values\\\indent\textit{2020 Mathematics Subject Classifiction}:  Primary 05A10, 11M32; Secondary 11B65, 33B15\\\indent *The first author was supported by the Natural Science Foundation of China (grant no. 12371004), and the second author was supported in part  by the Applied Mathematics Program within the Department of Energy
(DOE) Office of Advanced Scientific Computing Research (ASCR) as part of the Collaboratory on
Mathematics for Mesoscopic Modeling of Materials (CM4).}

\maketitle

\begin{abstract}
     We perform polylogarithmic reductions for several classes of infinite sums motivated by Z.-W. Sun's related works in 2022--2023. For certain choices of parameters, these series can be  expressed by cyclotomic multiple zeta values of levels $4$, $5$, $6$, $7$, $8$, $9$, $10$, and $12$. In particular, we obtain closed forms of the series
$$\sum_{k=0}^\infty\frac{x_0^k}{(k+1)\binom{3k}k}
\ \ \text{and}\ \ \sum_{k=1}^\infty\frac{x_0^k}{k^2\bi{3k}k}$$
for any $x_0\in(-27/4,27/4)$.
     \end{abstract}

\pagenumbering{arabic}

\section{Introduction}

Motivated by the identity
$$\sum_{k=0}^\infty\f{25k-3}{2^k\bi{3k}k}=\f{\pi}2$$
observed by R. W. Gosper in 1974 (cf. \cite{AKP2003}),
 Z.-W. Sun \cite{Sun2022ab} determined in 2022 the values of
\begin{align}\sum_{k=1}^\infty\frac {k^rx^k}{\bi{3k}k}\ \left(-\frac{27}4<x<\frac{27}4\right)\ \text{ and } \ \sum_{k=1}^\infty\frac {k^rx^k}{\binom{4k}{2k}}\ (-16<x<16)\end{align}
for $r=0,\pm1$. For example, \cite[Theorem 1.1]{Sun2022ab} implies that
\begin{align}
\begin{split}
\sum_{k=0}^\infty\f{x^{3k}}{(x-1)^k\bi{3k}k}=&\ \f{27(1-x)}{(x+3)(2x-3)^2}+\f{3x(x-1)}{(2x-3)^3}\log(1-x)
\\&\ +\f{2x(x-1)(x^2-12x+9)q(x)}{(x+3)(2x-3)^3\sqrt{(1-x)(x+3)}}
\end{split}
\end{align}
for all $x\in(-3,c)$,
where
\begin{equation}\label{c}c\colonequals \f32\l[\big(1+\sqrt2\big)^{1/3}-\big(1+\sqrt2\big)^{-1/3}\r]=0.8941\cdots
\end{equation}
and
 \begin{equation}\label{q(x)} q(x)\colonequals \arg\frac{1+x-i \sqrt{(1-x) (3+x)}}{1-x-i \sqrt{(1-x) (3+x)}}=\begin{cases}\tan^{-1}\Big(\f x{x+2}\sqrt{\f{3+x}{1-x}}\Big)&\t{if}\ -2<x<1,
 \\-\f{\pi}2&\t{if}\ x=-2,
 \\\tan^{-1}\Big(\f x{x+2}\sqrt{\f{3+x}{1-x}}\Big)-\pi&\t{if}\ -3<x<-2.
 \end{cases}
 \end{equation}Here, ``$ \tan^{-1}$'' denotes the inverse tangent function.
 As observed in \cite{Sun2022ab},
if $x_0\in(-27/4,27/4)$ then $x^3=x_0(x-1)$ for some $x\in(-3,c)$.

 In 2023, Sun \cite{Sun2023} evaluated \begin{align}\sum_{k=1}^\infty\f{x_0^k}{(2k-1)\bi{3k}k},\ \sum_{k=0}^\infty\f{x_0^k}{(3k+1)\bi{3k}k},
\ \t{and}\ \sum_{k=0}^\infty\f{x_0^k}{(3k+2)\bi{3k}k}\end{align}
for any $x_0\in(-27/4,27/4)$, and
\begin{align}\sum_{k=1}^\infty\f{k^rx_0^k}{(2k-1)\bi{4k}{2k}},\ \sum_{k=1}^\infty\f{k^rx_0^k}{(4k+1)\bi{4k}{2k}}
,\ \t{and}\ \sum_{k=1}^\infty\f{k^rx_0^k}{(4k+3)\bi{4k}{2k}}\end{align}
for any $r\in\{0,1\}$ and $x_0\in(-16,16)$.

Sun \cite{Sun2023} proved that
\begin{equation}\label{k+1}\sum_{k=0}^\infty\f1{(k+1)2^k\bi{3k}k}=3\log^2 2-\f{\pi^2}4+\f{8\pi-21\log2}5
\end{equation}
and
\begin{equation}\label{8/3-k+1}\sum_{k=0}^\infty\f{8^k}{(k+1)3^k\bi{3k}k}=\f{9\log^23-3\pi^2}{16}+\f{11\sqrt3\pi}{14}-\f {9\log3}7.
\end{equation}
Motivated by this, we first evaluate the series
\begin{align}\sum_{k=0}^\infty\f{x_0^k}{(k+1)\bi{3k}k}
\ \t{and}\ \sum_{k=1}^\infty\f{x_0^k}{k^2\bi{3k}k}\end{align}
for any $x_0\in(-27/4,27/4)$. Namely, we obtain the following two general theorems.

 \begin{theorem}\label{thm:3k_q} We have
 \begin{align}
\begin{split}
\sum_{k=0}^\infty\frac{x^{3k}}{(k+1)(x-1)^{k}\binom{3k}k}={}&\frac{2 (1-x)q(x)^2}{x^3}-\frac{ x^2+15 x-18}{x^2  (2 x-3)}\sqrt{\frac{1-x}{3+x}}q(x)\\{}&-\frac{3 (1-x) \log ^2(1-x)}{2 x^3}+\frac{3 (x-6) (x-1) \log (1-x)}{2 x^2 (2 x-3)}\label{eq:conj1.1}
\end{split}\end{align} for any $x\in(-3,c)\smallsetminus\{0\}$.
\end{theorem}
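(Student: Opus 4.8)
The plan is to obtain~\eqref{eq:conj1.1} by integrating Sun's $r=0$ evaluation once, exploiting the extra factor $1/(k+1)$. Abbreviate $w\colonequals x^{3}/(x-1)$ and let $F(x)$ denote the right-hand side of the known closed form for $\phi(w)\colonequals\sum_{k=0}^{\infty}w^{k}/\binom{3k}{k}$, so that $\phi(w)=F(x)$. Because $\frac{w^{k}}{k+1}=\frac1w\int_{0}^{w}t^{k}\,\D t$, the target series equals $\frac1w\int_{0}^{w}\phi(t)\,\D t$. Since $\frac{\D}{\D s}\frac{s^{3}}{s-1}=\frac{s^{2}(2s-3)}{(s-1)^{2}}$ and $F$ is valid throughout $(-3,c)$, substituting $t=s^{3}/(s-1)$, under which $s$ runs over the segment from $0$ to $x$, yields the analytic identity
\[
\frac{x^{3}}{x-1}\sum_{k=0}^{\infty}\frac{x^{3k}}{(k+1)(x-1)^{k}\binom{3k}{k}}=\int_{0}^{x}F(s)\,\frac{s^{2}(2s-3)}{(s-1)^{2}}\,\D s,
\]
reducing everything to an elementary integral.

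The computation hinges on one clean fact, obtained by differentiating~\eqref{q(x)} directly: $q'(x)=\frac{3}{2\sqrt{(1-x)(3+x)}}$. Hence $\frac{q(s)}{\sqrt{(1-s)(3+s)}}=\frac13\big(q(s)^{2}\big)'$, which is precisely the mechanism producing the $q(x)^{2}$ term of~\eqref{eq:conj1.1}; likewise $\frac{\log(1-s)}{1-s}=-\frac12\big(\log^{2}(1-s)\big)'$ produces the $\log^{2}(1-x)$ term. Rather than integrate termwise, which passes through spurious dilogarithms that must eventually cancel, I would argue by differentiation. Let $H(x)$ be the right-hand side of~\eqref{eq:conj1.1}. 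Using $q'$ as above, multiplying out gives
\[
\frac{x^{3}}{x-1}H(x)=-2q(x)^{2}+\frac{3}{2}\log^{2}(1-x)-\frac{x(x^{2}+15x-18)}{(x-1)(2x-3)}\sqrt{\frac{1-x}{3+x}}\,q(x)+\frac{3x(x-6)}{2(2x-3)}\log(1-x).
\]

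It then suffices to verify the single identity $\frac{\D}{\D x}\big[\frac{x^{3}}{x-1}H(x)\big]=F(x)\frac{x^{2}(2x-3)}{(x-1)^{2}}$. Differentiating the four summands above and sorting the result by the functions $q^{2}$, $q$, $\log^{2}(1-x)$, $\log(1-x)$, and a purely rational remainder, the identity splits into three rational-function equalities: the $q^{2}$ and $\log^{2}$ parts drop out because their coefficients are constant, and one matches separately the coefficient of $q$, the coefficient of $\log(1-x)$, and the rational part against the three summands of $F(x)\frac{x^{2}(2x-3)}{(x-1)^{2}}$. For instance the $\log(1-x)$ coefficient reduces to $\frac{3}{x-1}+\frac{3(x^{2}-3x+9)}{(2x-3)^{2}}=\frac{3x^{3}}{(x-1)(2x-3)^{2}}$, exactly the $\log$-coefficient on the right. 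Once the derivatives agree, $\frac{x^{3}}{x-1}H$ and $\frac{x^{3}}{x-1}G$, with $G$ the target series, differ by a constant on each of $(-3,0)$ and $(0,c)$; since $q(0)=0$ makes both tend to $0$ as $x\to0^{\pm}$, the constants vanish and $H=G$ follows. The point $x=0$ is excluded only because $H$ carries removable $x^{-3}$ singularities there.

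The main obstacle is the bookkeeping in the $q$-coefficient equality, which requires $\frac{\D}{\D x}$ of the algebraic prefactor $\frac{x(x^{2}+15x-18)}{(x-1)(2x-3)}\sqrt{\frac{1-x}{3+x}}$ together with the collapse of the resulting square roots; once cleared of $\sqrt{(1-x)(3+x)}$ it is again a rational identity, but a bulky one. Care is also needed to keep the branch of $q$ prescribed in~\eqref{q(x)} consistent across $x=-2$, and to treat the two subintervals of $(-3,c)\smallsetminus\{0\}$ separately when invoking the constant-of-integration argument.
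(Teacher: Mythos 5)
Your proof is correct, and it takes a genuinely different route from the paper's. The paper never invokes Sun's earlier $r=0$ evaluation: it works from the integral representation \eqref{eq:(3k,k)a'}, sums the series under the integral sign (via $\tfrac{3k+1}{k+1}=3-\tfrac{2}{k+1}$), and evaluates the resulting one-dimensional integral \eqref{eq:Sun1.1} by partial fractions over $\tfrac1x$, $-\tau^{+}(x)$, $-\tau^{-}(x)$ together with the elementary integral \eqref{eq:3loglogs}. You instead treat the closed form $F(x)$ of $\sum_{k\ge0}\frac{x^{3k}}{(x-1)^k\binom{3k}{k}}$ from \cite{Sun2022ab} as known, write the target as $\frac1w\int_0^w\phi(t)\,\D t$ with $w=\frac{x^3}{x-1}$ and $\phi$ the generating function of $1/\binom{3k}{k}$, pull back along $t=s^3/(s-1)$ (legitimate, since this map is monotone on $(-3,c)$ because $2s-3<0$ there), and verify the antiderivative identity by differentiation. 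Your key facts check out: $q'(x)=\frac{3}{2\sqrt{(1-x)(3+x)}}$ is correct and holds across $x=-2$ (the piecewise arctangent formulas in \eqref{q(x)} differ only by constants), your displayed $\log(1-x)$-coefficient identity is right, and the bulky $q$-coefficient identity does collapse: after clearing $\sqrt{(1-x)(3+x)}$ its numerator reduces to $-2x^3(x^2-12x+9)$, matching the $q$-term of $F(x)\frac{x^2(2x-3)}{(x-1)^2}$, while the leftover rational pieces sum to $\frac{27x^2}{(2x-3)(1-x)(3+x)}$ as required; finally both sides vanish as $x\to0^{\pm}$, so the constants of integration are zero on each subinterval. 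The trade-off: the paper's method is self-contained and serves as the template that extends to the harmonic-number sums of Theorems \ref{thm:3kGPL} and \ref{thm:4kGPL}, whereas yours is more elementary (no complex partial fractions, no dilogarithms anywhere) but is verification-style, requiring both the stated answer and Sun's prior evaluation as inputs.
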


\begin{theorem}\label{thm:conj1.2}
For $ x\in(-3,c)$, we have \begin{align}\sum_{k=1}^\infty\f{x^{3k}}{k^2(x-1)^k\bi{3k}k}
=\f23q(x)^2-\f12\log^2(1-x).\label{eq:conj1.2}
\end{align}\end{theorem}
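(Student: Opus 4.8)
The plan is to treat the left-hand side as a function of the single variable $y\colonequals x^3/(x-1)$ and to recover it by one termwise integration from a weight-one series whose closed form is already available. I would set $S(y)\colonequals\sum_{k=1}^\infty y^k/(k^2\binom{3k}k)$ and $g(y)\colonequals\sum_{k=1}^\infty y^k/(k\binom{3k}k)$; both converge on $(-27/4,27/4)$, and the second is precisely the $r=-1$ instance of Sun's family $\sum_k k^ry^k/\binom{3k}k$ from \cite{Sun2022ab}, obtainable within the present excerpt by integrating the introduction's $r=0$ formula once against $\D y/y$. Termwise differentiation gives $S'(y)=g(y)/y$. Since $y=x^3/(x-1)$ maps $(-3,c)$ onto $(-27/4,27/4)$ as a decreasing bijection with $x=0\mapsto y=0$ (as recalled in the introduction), and since $\D y/y=\frac{2x-3}{x(x-1)}\,\D x$, I would reduce the problem to integrating $g(y)\,\D y/y$ in the variable $x$ from the base point $x=0$, where $S$ and the claimed right-hand side both vanish.

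Two explicit ingredients will drive the computation. First, I would compute the derivative of $q$ by logarithmic differentiation of the complex argument in \eqref{q(x)}; the logarithmic derivative of the numerator $1+x-i\sqrt{(1-x)(3+x)}$ turns out to be purely imaginary, and the contributions combine to
\begin{equation*}
q'(x)=\frac{3}{2\sqrt{(1-x)(3+x)}}.
\end{equation*}
Second, the $r=-1$ evaluation gives, after expressing it through $y=x^3/(x-1)$,
\begin{equation*}
g\!\left(\frac{x^3}{x-1}\right)=\frac{2x}{3-2x}\sqrt{\frac{1-x}{3+x}}\,q(x)+\frac{x\log(1-x)}{3-2x}.
\end{equation*}
Multiplying this by $\frac{2x-3}{x(x-1)}$, I expect the algebraic prefactors to cancel completely, leaving
\begin{equation*}
\frac{\D S}{\D x}=\frac{2q(x)}{\sqrt{(1-x)(3+x)}}+\frac{\log(1-x)}{1-x}=\frac{\D}{\D x}\!\left[\frac23q(x)^2-\frac12\log^2(1-x)\right],
\end{equation*}
where the last step uses $2q/\sqrt{(1-x)(3+x)}=\frac43qq'=\frac23(q^2)'$ together with $\log(1-x)/(1-x)=-\frac12(\log^2(1-x))'$.

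Integrating this identity along the connected interval $(-3,c)$ and matching the value $0$ at $x=0$ will then give \eqref{eq:conj1.2}. The main obstacle will not be the final telescoping but the input it relies on: establishing the closed form of $g$, which requires antidifferentiating the arctangent-type term $q(x)$ against an algebraic weight when integrating the $r=0$ formula, and tracking the branch of $q(x)$, defined piecewise in \eqref{q(x)}, continuously across $x=-2$ so that the formula for $q'$ is valid on all of $(-3,c)$. One must also justify the termwise differentiation and integration, which is immediate from the local uniform convergence of the defining power series on $(-27/4,27/4)$.
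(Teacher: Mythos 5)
Your proposal is correct, but it proceeds by a genuinely different route from the paper. The paper's proof is integral-representation based: using \eqref{eq:(3k,k)a} it writes the series as $-\int_0^1\log\big(1-\frac{x^3t^2(1-t)}{x-1}\big)\frac{\D t}{1-t}$, integrates by parts into three partial-fraction pieces attached to $\frac1x,-\tau^\pm(x)$, evaluates these as dilogarithms, and then collapses $\Li_2(x)+\Li_2(-1/\tau^+(x))+\Li_2(-1/\tau^-(x))$ into pure squared logarithms via Newman's functional equation combined with Landen's transformation, as in \eqref{eq:Newman_sum}. You instead run a first-order ODE argument: termwise differentiation reduces weight $2$ to weight $1$, the weight-$1$ series $g$ is Sun's $r=-1$ evaluation \eqref{eq:2022ak+b} from \cite{Sun2022ab}, and the identity $q'(x)=\frac{3}{2\sqrt{(1-x)(3+x)}}$ (which I verified, including smoothness of $q$ across $x=-2$, where the piecewise branch adjustment in \eqref{q(x)} keeps the ratio off the negative real axis on all of $(-3,c)$) lets you recognize the resulting derivative as that of $\frac23q(x)^2-\frac12\log^2(1-x)$, with matching value $0$ at $x=0$. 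Your algebra is right: the prefactors do cancel exactly as claimed. What each approach buys: the paper's argument is self-contained and its machinery (integral representations plus polylogarithmic reduction) powers the rest of the paper's results, whereas yours is shorter and avoids dilogarithm functional equations entirely --- it is precisely the reverse of the paper's own Remark following Theorem \ref{thm:3kGPL}, which recovers \eqref{eq:2022ak+b} from Theorem \ref{thm:conj1.2} by applying $\frac{x(1-x)}{3-2x}\frac{\D}{\D x}$. The one point you must keep straight is the provenance of $g$: citing the paper's \eqref{eq:2022ak+b} itself would be circular, since there it is derived from Theorem \ref{thm:conj1.2}; citing \cite{Sun2022ab} (as you do) is legitimate, and the fallback of deriving $g$ from the introduction's $r=0$ formula by integrating against $\D y/y$ is feasible but constitutes nontrivial work of roughly the same magnitude as the step you are saving.
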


Sun \cite{Sun2023} proved that \begin{equation}\label{4^k}\sum_{k=0}^\infty\f{4^k}{(k+1)\bi{4k}{2k}}=\f{20\pi-3\pi^2}8-\f7{\sqrt3}\log\f{\sqrt3+1}{\sqrt3-1}
+\f 32\log^2\f{\sqrt3+1}{\sqrt3-1}.
\end{equation}
Motivated by this, we evaluate
\begin{align}\sum_{k=1}^\infty\f{x_0^k}{(k+1)\bi{4k}{2k}}
 \text{ and } \sum_{k=0}^\infty\f{x_0^k}{k^2\bi{4k}{2k}}\end{align}
for all $x_0\in(-16,16)$, arriving at  the general results in the next two  theorems.

\begin{theorem}\label{thm:conj1.3}\begin{enumerate}[leftmargin=*,  label=\emph{(\alph*)},ref=(\alph*),
widest=d, align=left] \item
For $ X>\frac{1}{4}$, we have \begin{align}\begin{split}
\sum_{k=0}^\infty\frac{1}{(k+1)X^{2k}\binom{4k}{2k}}={}&\frac{4X (12X-1) \cot ^{-1}\sqrt{4X-1}}{\sqrt{4X-1}}-24X^2 \left( \cot ^{-1}\sqrt{4X-1} \right)^2\\{}&-\frac{4X(12 X+1) \tanh ^{-1}\frac{1}{\sqrt{4 X+1}}}{\sqrt{4X+1}}+24X^2 \left(\tanh ^{-1}\frac{1}{\sqrt{4X+1}}\right)^2.\end{split}\end{align}
\item If $ x>\frac{1+\sqrt{2}}{2}$ or $ x<\frac{1-\sqrt{2}}{2}$,  then we have \begin{align}
\begin{split}&
\sum_{k=0}^\infty\frac{2^{2k}}{(k+1)[x(1-x)]^{k}\binom{4k}{2k}}\\={}&\frac{2 (1-x) (6 x-1)}{1-2 x} \sqrt{x} \tanh ^{-1}\frac{1}{\sqrt{x}}+\frac{2 x (6 x-5)}{1-2 x} \sqrt{1-x} \tanh ^{-1}\frac{1}{\sqrt{1-x}}\\{}&+3 x(1-x)  \left(\tanh ^{-1}\frac{1}{\sqrt{x}}\right)^2+3 x(1-x)  \left(\tanh ^{-1}\frac{1}{\sqrt{1-x}}\right)^2.
\end{split}
\end{align}\end{enumerate}

For $x_0\in(-16,0)$, it is easy to see that $x_0=4/(x(1-x))$ for some $x>(1+\sqrt{2})/2$ or $x<(1-\sqrt2)/2$.

\end{theorem}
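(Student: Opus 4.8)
The plan is to reduce both parts to the simpler generating function
\[
G(y):=\sum_{k=0}^\infty\frac{y^k}{\binom{4k}{2k}},
\]
whose closed form is essentially the $r=0$ case of Sun's $2022$ evaluation \cite{Sun2022ab}. Writing $\frac1{k+1}=\int_0^1u^k\,\D u$ and interchanging sum and integral gives
\[
\sum_{k=0}^\infty\frac{y^k}{(k+1)\binom{4k}{2k}}=\int_0^1 G(uy)\,\D u=\frac1y\int_0^y G(t)\,\D t,
\]
so the whole problem is an antidifferentiation of $G$. To obtain $G$ I would extract the even part of the central binomial generating function
\[
P(w):=\sum_{m=0}^\infty\frac{w^m}{\binom{2m}m}=\frac{4}{4-w}+\frac{4\sqrt w\,\sin^{-1}(\sqrt w/2)}{(4-w)^{3/2}},
\]
via $G(y)=\tfrac12\big[P(\sqrt y)+P(-\sqrt y)\big]$; the $\sin^{-1}$ at the positive argument and the $\sinh^{-1}$ produced at the negative argument are exactly what become the two inverse functions in the statement.

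For part (a) I would set $y=1/X^2$ (the constraint $X>\frac14$ is exactly the radius of convergence $y<16$), where the extraction collapses to
\[
G\!\left(\tfrac1{X^2}\right)=\frac{2X}{4X-1}+\frac{2X}{4X+1}+\frac{2X\cot^{-1}\sqrt{4X-1}}{(4X-1)^{3/2}}-\frac{2X\tanh^{-1}\frac1{\sqrt{4X+1}}}{(4X+1)^{3/2}},
\]
using $\sin^{-1}\frac1{2\sqrt X}=\cot^{-1}\sqrt{4X-1}$ and $\sinh^{-1}\frac1{2\sqrt X}=\tanh^{-1}\frac1{\sqrt{4X+1}}$. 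Substituting $t=1/T^2$ in $S=\frac1y\int_0^y G(t)\,\D t$ turns it into $2X^2\int_X^\infty T^{-3}G(1/T^2)\,\D T$, whose integrand is a sum of rational terms together with terms $\frac{A(T)}{T^2(4T-1)^{3/2}}$ and $\frac{B(T)}{T^2(4T+1)^{3/2}}$, where $A=\cot^{-1}\sqrt{4T-1}$ and $B=\tanh^{-1}\frac1{\sqrt{4T+1}}$. The key computational identities are
\[
A'(T)=-\frac1{2T\sqrt{4T-1}},\qquad B'(T)=-\frac1{2T\sqrt{4T+1}}.
\]
Integrating the $A$- and $B$-linear pieces by parts converts each into an algebraic multiple of $A$ or $B$ (the source of the $\frac{4X(12X\mp1)}{\sqrt{4X\mp1}}$ prefactors) plus residual algebraic integrals that cancel the separate rational terms; a leftover integrand proportional to $\frac{A(T)}{T\sqrt{4T-1}}=-(A^2)'$, and its $B$-analogue, then integrates directly to $-A^2$ (resp. $-B^2$), which is what produces the squared terms $\mp24X^2(\,\cdot\,)^2$. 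Collecting everything yields (a); equivalently one may bypass the integration and simply verify that the stated right-hand side $\Psi(X)$ satisfies $\frac{\D}{\D y}[y\Psi]=G(y)$ together with $\Psi\to1$ as $X\to\infty$, which pins it down.

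For part (b) the same reduction applies with the reparametrization $y=4/[x(1-x)]$, which covers $y\in(-16,0)$ precisely when $x>\frac{1+\sqrt2}2$ or $x<\frac{1-\sqrt2}2$. The cleanest route is analytic continuation of part (a) along $X=\frac i2\sqrt{x(x-1)}$ (so that $1/X^2=4/[x(1-x)]$), using the radical identities $\sqrt{4X+1}=\sqrt x+i\sqrt{x-1}$ and $\sqrt{4X-1}=\sqrt{x-1}+i\sqrt x$; under these the two inverse functions of (a) split into the real $\tanh^{-1}\frac1{\sqrt x}$ and the continued $\tanh^{-1}\frac1{\sqrt{1-x}}=\tanh^{-1}\frac1{i\sqrt{x-1}}$ pieces, and the rational prefactors rearrange into $\frac{2(1-x)(6x-1)}{1-2x}$ and $\frac{2x(6x-5)}{1-2x}$. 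The main obstacle throughout is the explicit antidifferentiation in part (a) — in particular arranging for the squared inverse-function terms to appear with the correct rational coefficients — and, in part (b), the careful branch bookkeeping needed to land on a manifestly real expression after the continuation; both are lengthy but mechanical once the derivative identities above are in hand.
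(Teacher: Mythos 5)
Your proposal is correct, and it splits into a half that genuinely differs from the paper and a half that coincides with it. For part (b) you do exactly what the paper does: substitute $X=\tfrac{i}{2}\sqrt{x(x-1)}$ into the part-(a) identity, continue analytically to $X/i>\tfrac14$, and track branches so that the combinations $\tanh^{-1}\tfrac{1}{\sqrt{4X+1}}\pm i\cot^{-1}\sqrt{4X-1}$ recombine into $\tanh^{-1}\tfrac1{\sqrt x}$ and $\tanh^{-1}\tfrac1{\sqrt{1-x}}$; your radical identities $\sqrt{4X+1}=\sqrt x+i\sqrt{x-1}$, $\sqrt{4X-1}=\sqrt{x-1}+i\sqrt x$ are the paper's own, up to presentation. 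For part (a), however, your route is different in substance. The paper extracts the even part at the level of the weighted sums, writing the series as $\sum_{k\ge0}\tfrac{1}{(k+2)X^{k}\binom{2k}{k}}+\sum_{k\ge0}\tfrac{1}{(k+2)(-X)^{k}\binom{2k}{k}}$, and then evaluates the $\binom{2k}{k}$ sum through the integral representation $\tfrac1{k\binom{2k}k}=\tfrac12\int_0^1[t(1-t)]^{k-1}\,\D t$ together with Panzer's \texttt{HyperInt} algorithm under the parametrization $X=u(1-u)$, $u=\tfrac{1+i\sqrt{4X-1}}{2}$. You instead use $\tfrac1{k+1}=\int_0^1u^k\,\D u$ to reduce everything to antidifferentiating $G(y)=\sum_{k\ge0}y^k/\binom{4k}{2k}$, obtain $G$ as the even part of the classical central-binomial generating function $\sum_m w^m/\binom{2m}{m}$, and close the computation either by integration by parts or by verifying that the stated right-hand side $\Psi$ satisfies $\tfrac{\D}{\D y}[y\Psi]=G(y)$ with $y\Psi\to0$ as $y\to0^+$. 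Your integration-by-parts sketch is loose about how the ``residual algebraic integrals'' cancel the rational terms, but the ODE verification you offer as a fallback is airtight and pins down the identity: the $A$- and $B$-linear contributions do reproduce $\tfrac{2XA}{(4X-1)^{3/2}}-\tfrac{2XB}{(4X+1)^{3/2}}$, and the leftover rational pieces sum to $\tfrac{16X^2}{16X^2-1}=\tfrac{2X}{4X-1}+\tfrac{2X}{4X+1}$, exactly $G(1/X^2)$. As for what each approach buys: yours is elementary and software-free, needing only the classical arcsine generating function plus calculus, whereas the paper's keeps part (a) inside the integral-representation framework of its \S2.1 (reused throughout the paper) at the cost of invoking symbolic hyperlogarithm integration.
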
\begin{theorem} \label{thm:conj1.4} If $x>(1+\sqrt2)/2$ or $x<(1-\sqrt2)/2$, then we have
\begin{align}
\sum_{k=1}^\infty\f{4^k}{k^2[x(1-x)]^k\bi{4k}{2k}}
=-2\left(\tanh ^{-1}\frac{1}{\sqrt{x}}\right)^{2}-2\left(\tanh ^{-1}\frac{1}{\sqrt{1-x}}\right)^2.
\label{eq:4k2ksymm}\end{align}
\end{theorem}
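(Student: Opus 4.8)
The plan is to realize the target series as the middle member of a chain of three generating functions and to reduce it, by successive integration, to the level-$0$ sum already evaluated in \cite{Sun2022ab}. Throughout write $z=\frac{4}{x(1-x)}$, $a=\tanh^{-1}\frac{1}{\sqrt x}$ and $b=\tanh^{-1}\frac{1}{\sqrt{1-x}}$, and set
\[
U(z)=\sum_{k=1}^\infty\frac{z^k}{\binom{4k}{2k}},\qquad T(z)=\sum_{k=1}^\infty\frac{z^k}{k\binom{4k}{2k}},\qquad S(z)=\sum_{k=1}^\infty\frac{z^k}{k^2\binom{4k}{2k}},
\]
so that $S$ is exactly the sum in \eqref{eq:4k2ksymm}. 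These are linked by $z\,T'(z)=U(z)$ and $z\,S'(z)=T(z)$, i.e.\ $S$ arises from $U$ by two applications of $\int_0^z(\,\cdot\,)\,\frac{dt}{t}$. Since $z=\frac{4}{x(1-x)}$ rationalizes the problem and (per the remark after Theorem~\ref{thm:conj1.3}) sweeps out $z\in(-16,0)$ as $x$ ranges over the two stated intervals, I would first record the differentials $\frac{dz}{z}=\frac{2x-1}{x(1-x)}\,dx$, $\frac{da}{dx}=\frac{1}{2\sqrt x(1-x)}$ and $\frac{db}{dx}=-\frac{1}{2x\sqrt{1-x}}$, valid with fixed branches on each of $x>\frac{1+\sqrt2}{2}$ and $x<\frac{1-\sqrt2}{2}$.

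The substantive step is passing from $U$ to the level-$1$ sum $T$. Starting from Sun's $r=0$ evaluation of $\sum_{k\ge1}z^k/\binom{4k}{2k}$, one integration $T=\int_0^z U(t)\,\frac{dt}{t}$, carried out in the variable $x$ via $\frac{dz}{z}=\frac{2x-1}{x(1-x)}\,dx$, should yield
\[
T(z)=\frac{2\bigl(\sqrt{1-x}\,b-\sqrt x\,a\bigr)}{2x-1}.
\]
Equivalently, both $U$ and $T$ can be produced from the classical squared-arcsine generating function $\sum_{n\ge1}\frac{w^n}{n^2\binom{2n}{n}}=2\arcsin^2\frac{\sqrt w}{2}$ by extracting its even-index part (which turns $\binom{2n}{n}$ into $\binom{4k}{2k}$) and then converting the resulting $\arcsin$/$\operatorname{arcsinh}$ values into the real inverse-hyperbolic tangents $a,b$ under $z=\frac{4}{x(1-x)}$.

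With $T$ in hand the final integration is almost automatic. From $z\,S'(z)=T(z)$ and the chain rule, $\frac{dS}{dx}=T\cdot\frac{d\ln z}{dx}=\frac{2(\sqrt{1-x}\,b-\sqrt x\,a)}{2x-1}\cdot\frac{2x-1}{x(1-x)}$; the factor $2x-1$ cancels and, using the recorded derivatives of $a$ and $b$, the right-hand side collapses to an exact derivative:
\[
\frac{dS}{dx}=\frac{2b}{x\sqrt{1-x}}-\frac{2a}{(1-x)\sqrt x}=\frac{d}{dx}\bigl(-2a^2-2b^2\bigr).
\]
Integrating gives $S=-2a^2-2b^2+C$, and letting $x\to+\infty$ (so that $z\to0^-$ and $a,b\to0$, whence $S\to0$) forces $C=0$, which is precisely \eqref{eq:4k2ksymm}.

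I expect the main obstacle to be the middle step, namely pinning down the clean closed form for $T$: this needs both the level-$0$ input and careful bookkeeping of the branches of $\tanh^{-1}$, $\arcsin$ and $\operatorname{arcsinh}$, since on $x>\frac{1+\sqrt2}{2}$ one has $1-x<0$ (and symmetrically $x<0$ on the other interval), so $\sqrt{1-x}$ and the corresponding inverse-hyperbolic tangent become imaginary and must be matched consistently across the two ranges. Once $T$ is correctly normalized, the concluding exact-derivative identity makes the appearance of only the squared terms $-2a^2-2b^2$ entirely transparent.
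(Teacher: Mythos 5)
Your proposal is correct, and it takes a genuinely different route from the paper's. The paper never introduces the level-$1$ sum $T$: it works directly on $S$, combining the parity identity $\frac12\sum_{k\ge1}\frac{1}{k^2X^{2k}\binom{4k}{2k}}=\sum_{k\ge1}\frac{1+(-1)^k}{k^2X^k\binom{2k}{k}}$ for real $X>\frac14$ with $\sum_{k\ge1}\frac{z^{2k}}{k^2\binom{2k}{k}}=2\bigl(\sin^{-1}\frac z2\bigr)^2$ evaluated at $z=1/\sqrt X$ and $z=i/\sqrt X$, and then performing a single analytic continuation $X=\frac i2\sqrt{x(x-1)}$, recycling the branch identities recorded in the proof of Theorem \ref{thm:conj1.3}(b); in other words, your parenthetical ``equivalently'' remark \emph{is} the paper's actual proof, executed at level $2$ instead of level $1$. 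What your route buys is that the ascent from $T$ to $S$ is pure calculus: with your derivatives $\frac{\D a}{\D x}=\frac1{2\sqrt x\,(1-x)}$ and $\frac{\D b}{\D x}=-\frac1{2x\sqrt{1-x}}$ the integrand indeed collapses to $\frac{\D S}{\D x}=\frac{2b}{x\sqrt{1-x}}-\frac{2a}{(1-x)\sqrt x}=\frac{\D}{\D x}\bigl(-2a^2-2b^2\bigr)$, which also explains structurally why only the pure squares (and no cross term $ab$) appear in the answer; moreover your closed form for $T$ is correct (for instance at $x=2$, $z=-2$, both it and the series give $-0.30737\ldots$). What the paper's route buys is self-containedness: it needs no level-$1$ input, whereas you still owe a proof of the $T$ formula, and that proof is exactly the same parity-plus-continuation work one level down (or a citation to the $r=-1$ case of \cite{Sun2022ab}, where this evaluation effectively lives) — so the branch bookkeeping you identify as the main obstacle does not disappear, it just migrates to the middle step. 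Two small repairs to your write-up: the integration constant must be fixed on each connected component separately, since your limit $x\to+\infty$ only covers $x>\frac{1+\sqrt2}{2}$; for $x<\frac{1-\sqrt2}{2}$ send $x\to-\infty$ instead, or invoke the involution $x\leftrightarrow 1-x$, which fixes $z$ and swaps $a\leftrightarrow b$. Also note that on $x>\frac{1+\sqrt2}{2}$ the quantities $b$ and $\frac{\D b}{\D x}$ are purely imaginary, so your exact-derivative identity should be read as an identity of analytic functions with principal branches, exactly as prescribed by the paper's footnote on square roots.
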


For $r\in\mathbb Z_{>0}=\{1,2,\ldots\}$, the $r$-th order harmonic numbers are defined by \begin{align} \mathsf H_m^{(r)}\colonequals\sum_{k=1}^m\frac1{k^{r}}
 \ (m\in\mathbb Z_{\ge0}=\{0,1,2,\ldots\}).\end{align}
 Those $\mathsf H_m=\mathsf H_m^{(1)}\ (m=0,1,2,\ldots)$ are the usual harmonic numbers.
 Infinite series with summands involving binomial coefficients in their denominators and harmonic numbers   in their numerators, are known as ``inverse binomial sums'' to high energy physicists \cite{DavydychevKalmykov2004,Weinzierl2004bn}.

Sun \cite{Sun2023} evaluated the series
\begin{align}\sum_{k=0}^\infty\f{\mathsf H_{3k}-\mathsf H_k}{(2k-1)2^k\bi{3k}k},\ \sum_{k=0}^\infty\f{\mathsf H_{3k+1}-\mathsf H_k}{(3k+1)2^k\bi{3k}k},
\ \sum_{k=0}^\infty\f{\mathsf H_{2k}-\mathsf H_k}{(3k+1)2^k\bi{3k}k},\text{ and } \sum_{k=0}^\infty\f{\mathsf H_{2k}-\mathsf H_k}{(3k+2)2^k\bi{3k}k}.\end{align}
We present a more general approach in \S\ref{sec:polylog_red} of the current work, accommodating to selected  ($r$-th order)  harmonic numbers and  rational functions of $k$ in the summands. To execute such analysis, we will need
generalized polylogarithms (GPLs), which are defined recursively by \cite[(2.1) and (2.2)]{Frellesvig2016}: \begin{align}\notag\\[-12pt]
\left\{\begin{array}{@{}l}
G(\alpha_{1},\dots,\alpha_n;z)\colonequals\smash[t]{\displaystyle\int_0^z\frac{\D x}{x-\alpha_1}G(\alpha_2,\dots,\alpha_n;x)},\quad\text{if }|\alpha_1|+\cdots+|\alpha_n|\neq0,\ \\[15pt]
\smash[b]{G(\underset{n }{\underbrace{0,\dots,0 }};z)}\colonequals\smash[b]{\dfrac{\log^nz}{n!}},\quad G(-\!\!-;z)\colonequals1. \\
\end{array}\right.\label{eq:GPL_rec}\\[-6pt]\notag
\end{align}In the two theorems below (to be proved in \S\ref{subsec:GPLchar}), we require the following $\mathbb Q$-vector space for suitably chosen $ z\in\mathbb C$, $ k,k_*\in \mathbb Z_{>0}$, and $ S,S_*\subset\mathbb C$:\begin{align}
\mathfrak G_{k;k_*}^{(z)}[S;S_*]\colonequals{}&\sum_{j=k_*}^k\Span_{\mathbb Q}\left\{ G(\alpha_1,\dots,\alpha_k;z)\left|\begin{smallmatrix}\alpha_\ell\in\{0,1\}\cup  S,\ell\in\mathbb Z\cap([1,k]\smallsetminus\{j\})\\{\alpha_j}\in{S_*}\end{smallmatrix}\right. \right\}.
\end{align} According to the original definition of GPLs, the expression $ G(\alpha_1,\dots,\alpha_k;z)$ diverges if $\alpha_1=z$. Later on,  we will tacitly employ Panzer's logarithmic regularization procedure \cite[\S2.3]{Panzer2015} in the  working definition of ``$ G(\alpha_1,\dots,\alpha_k;\alpha_{1})$'', without altering the notation for $ \mathfrak G_{k;k_*}^{(z)}[S;S_*]$.

Recall the constant $c$ from \eqref{c}, the function $ q(x)$ from \eqref{q(x)}.
We also define
\begin{align}  \tau^\pm(x)=\frac{1-x\pm i \sqrt{(1-x) (3+x)}}{2 x}\end{align}
and
\begin{align} \Li_2(z)\colonequals -G\left(0,\frac1z;1\right)\ \t{for}\ z\in\mathbb C\smallsetminus[\{0\}\cup(1,\infty)].\end{align}
 Note that  $ |x^3|\leq\frac{3^{3}}{2^{2}}|1-x|$ \big(resp.\  $ |x^3|<\frac{3^{3}}{2^{2}}|1-x|$\big) when $ x\in[-3,c]$ [resp.\ $ x\in(-3,c)$].

\begin{theorem}\label{thm:3kGPL}  \begin{enumerate}[leftmargin=*,  label=\emph{(\alph*)},ref=(\alph*),
widest=d, align=left] \item
If $ x\in[-3,c]\smallsetminus\{0\}$, $r\in\mathbb Z_{>0} $,  then we have\begin{align}
\sum_{k=1}^\infty\frac{a_{k}}{k^{r+1}\binom{3k}k}\left( \frac{x^{3}}{x-1} \right)^{k}\in\mathfrak{G}^{(1)}_{r+2;r+1}\left[\varnothing;\left\{ \frac{1}{x},-\tau^+ (x),-\tau^-(x)\right\} \right],\label{eq:3k_inv_sum1}
\end{align}when the sequence $ (a_k)_{k\in\mathbb Z_{>0}}$ is a member of $ \big\{\big(\frac1k\big)_{k\in\mathbb Z_{>0}},$  $(\mathsf H_{k}-\mathsf H_{3k})_{k\in\mathbb Z_{>0}},$ $(\mathsf H_{2k}-\mathsf H_{3k})_{k\in\mathbb Z_{>0}}\big\}$.
 If $ x\in[-3,c]\smallsetminus\{0\}$ and $r,r_{1},\dots, r_M\in\mathbb Z_{>0} $,  then we have\begin{align}
\begin{split}&
\sum_{k=1}^\infty\frac{\prod_{j=1}^M\mathsf H_{k}^{(r_{j})}}{k^{r+1}\binom{3k}k}\left( \frac{x^{3}}{x-1} \right)^{k}\\\in{}&{\mathfrak{G}}^{(1)}_{r+1+\sum_{j=1}^M r_j;r+1+\sum_{j=1}^M r_j}\left[\left\{ \frac{1}{x},-\tau^+ (x),-\tau^-(x)\right\};\left\{ \frac{1}{x},-\tau^+ (x),-\tau^-(x)\right\} \right].
\end{split}\label{eq:H_prod_sum}
\end{align}
\item If $ x\in(-3,c)\smallsetminus\{0\}$, then we have{\allowdisplaybreaks\begin{align}
\begin{split}
\sum_{k=0}^\infty\frac{\mathsf H_{k}-\mathsf H_{2k}}{(3k+1)\binom{3k}k}\left( \frac{x^{3}}{x-1} \right)^{k}={}&-\frac{1-x}{2x (3-2 x)}\left[ q(x)^2+\frac{3 \log ^2(1-x)}{4} \right]\\{}&-\frac{3-x}{2 x (3-2 x)}\sqrt{\frac{1-x}{3+x}}q(x) \log (1-x)\\={}&-\frac{1-x}{2x (3-2 x)}\left[q(x)+\frac{1}{2} \sqrt{\frac{3+x}{1-x}} \log (1-x)\right]\\&{}\times\left[q(x)+\frac{3}{2} \sqrt{\frac{1-x}{3+x}} \log (1-x)\right],\label{eq:Hk-H2k(3k+1)}
\end{split}\\\begin{split}
\sum_{k=0}^\infty\frac{\mathsf H_{k}-\mathsf H_{3k+1}}{(3k+1)\binom{3k}k}\left( \frac{x^{3}}{x-1} \right)^{k}={}&-\frac{1-x}{2x (3-2 x)}\left[ 3 \Li_2(x)+\frac{2 q(x)^2}{3}+\log ^2(1-x) \right]\\&{}+i\frac{3-x}{2 x (3-2 x)}\sqrt{\frac{1-x}{3+x}}\left[\Li_2\left( -\frac{1}{\tau^+(x)} \right)-\Li_2\left( -\frac{1}{\tau^-(x)} \right) \right]\\&{}-\frac{3-x}{2 x (3-2 x)}\sqrt{\frac{1-x}{3+x}}q(x) \log (1-x),\label{eq:Hk-H3k(3k+1)}
\end{split}\\\begin{split}
\sum_{k=0}^\infty\frac{\mathsf H_{k}-\mathsf H_{2k}}{(3k+2)\binom{3k}k}\left( \frac{x^{3}}{x-1} \right)^{k}={}&-\frac{ (3-x) (1-x)}{2 x^2 (3-2 x)}q(x)^2+\frac{3 (3+x) }{x^2}\sqrt{\frac{1-x}{3+x}}q(x)\\{}&+\frac{9 -6x-x^{2}  }{2 x^2 (3-2 x)}\sqrt{\frac{1-x}{3+x}}q(x) \log (1-x)\\{}&+\frac{9 (1-x) }{2 x^2}\log (1-x)-\frac{ 3(3-x) (1-x) }{8 x^2 (3-2 x)}\log ^2(1-x),\label{eq:Hk-H2k(3k+2)}
\end{split}\\\begin{split}
\sum_{k=1}^\infty\frac{\mathsf H_{k}-\mathsf H_{2k-2}}{(2k-1)\binom{3k}k}\left( \frac{x^{3}}{x-1} \right)^{k}={}&-\frac{(1-x) x}{3 (3-2 x)}\left[ q(x)^2+\frac{3 \log ^2(1-x)}{4} \right]\\&{}+\frac{x \left(3-x^2\right) }{3 \sqrt{(1-x) (3+x)} (3-2 x)}q(x) \log (1-x),\label{eq:Hk-H2k(2k-1)}
\end{split}\\\begin{split}
\sum_{k=1}^\infty\frac{\mathsf H_{k}-\mathsf H_{3k-1}}{(2k-1)\binom{3k}k}\left( \frac{x^{3}}{x-1} \right)^{k}={}&-\frac{(1-x) x}{3 (3-2 x)}\left[ 3 \Li_2(x)+\frac{2 q(x)^2}{3}+\log ^2(1-x) \right]\\&{}-\frac{ix \left(3-x^2\right) }{3 \sqrt{(1-x) (3+x)} (3-2 x)}\left[\Li_2\left( -\frac{1}{\tau^+(x)} \right)-\Li_2\left( -\frac{1}{\tau^-(x)} \right) \right]\\&{}+\frac{x \left(3-x^2\right) }{3 \sqrt{(1-x) (3+x)} (3-2 x)}q(x) \log (1-x).\label{eq:Hk-H3k(2k-1)}
\end{split}
\end{align}}
\end{enumerate}\end{theorem}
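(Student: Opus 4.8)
The plan is to turn every inverse binomial sum into an iterated integral by means of a Beta-function representation of $1/\binom{3k}k$, and then to read off a generalized polylogarithm. The basic identities are $\frac{1}{(3k+1)\binom{3k}k}=\int_0^1 t^{k}(1-t)^{2k}\,\D t$ and $\frac{1}{(2k-1)\binom{3k}k}=\frac23\int_0^1 t^{k}(1-t)^{2k-2}\,\D t$, together with their counterparts for the weights $1/(3k+2)$ and $1/k^{r+1}$, all instances of $B(a+1,b+1)=a!\,b!/(a+b+1)!$. The mechanism that manufactures the harmonic numbers is differentiation under the integral sign in the Beta parameters: from $\frac{\partial}{\partial a}B(a+1,2k+1)\big|_{a=k}=B(k+1,2k+1)\big[\psi(k+1)-\psi(3k+2)\big]$ and $\psi(k+1)-\psi(3k+2)=\mathsf H_k-\mathsf H_{3k+1}$, one sees that inserting $\log t$, $\log(1-t)$, or $\log\frac{t}{1-t}$ under the integral produces exactly the differences $\mathsf H_k-\mathsf H_{3k+1}$, $\mathsf H_{2k}-\mathsf H_{3k+1}$, and $\mathsf H_k-\mathsf H_{2k}$ appearing in the summands. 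Interchanging sum and integral is legitimate because $x\in[-3,c]$ forces $|x^3|\le\frac{27}4|1-x|$ while $\max_{t\in[0,1]}t^2(1-t)=\frac4{27}$, so the geometric factor $\big(\frac{x^3}{x-1}t^2(1-t)\big)^k$ has modulus at most $1$, strictly less on $(-3,c)$.

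After the interchange, carrying out the sum over $k$ (geometric when $a_k$ carries no extra $1/k^r$, polylogarithmic otherwise) leaves a single integral over $t\in(0,1)$ whose integrand is a rational function of $t$ times powers of $\log t$, $\log(1-t)$, and classical polylogarithms of $\frac{x^3}{x-1}t^2(1-t)$. The algebraic heart of the matter is the factorization
\begin{align*}
1-\frac{x^3}{x-1}\,t^2(1-t)=\frac{x^3}{x-1}\Big(t-\frac1x\Big)\big(t+\tau^+(x)\big)\big(t+\tau^-(x)\big),
\end{align*}
whose roots $\frac1x,\,-\tau^+(x),\,-\tau^-(x)$ are precisely the nonzero letters of the target spaces, while the letters $0$ and $1$ come from the elementary kernels $t^{-1}$ and $(1-t)^{-1}$. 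Performing the $t$-integration iteratively and normalizing the upper limit to $z=1$ then displays each term as a GPL $G(\alpha_1,\dots;1)$ with $\alpha_\ell\in\{0,1\}\cup\{\frac1x,-\tau^\pm(x)\}$. For the first claim of part (a), a weight count shows that $r+1$ letters are drawn from $\{0,1\}$ (via either $\Li_{r+1}$, or $\Li_r$ together with the one $\log$ carrying the harmonic number) and the outer $t$-integration supplies the single distinguished letter from $S_*=\{\frac1x,-\tau^\pm\}$, giving membership in $\mathfrak G^{(1)}_{r+2;r+1}$; for the product $\prod_j\mathsf H_k^{(r_j)}$ one instead applies the generating identity $\sum_k\mathsf H_k^{(r)}z^k=\Li_r(z)/(1-z)$ repeatedly, which raises both the weight and the number of distinguished letters to $r+1+\sum_j r_j$, as in \eqref{eq:H_prod_sum}.

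For the explicit identities of part (b) the same pipeline terminates at weight $2$, so each sum becomes a $\mathbb Q$-combination of products of the weight-one objects $G(\tfrac1x;1)=\log(1-x)$ and $G(-\tau^\pm(x);1)=\log\big(1+1/\tau^\pm(x)\big)$ together with genuine dilogarithms. I would collapse these using the key relations $\I\,G(-\tau^-(x);1)=q(x)$ and $\RE\,G(-\tau^-(x);1)=-\frac12\log(1-x)$, both following from $\tau^+(x)=\overline{\tau^-(x)}$ on $-3<x<1$ and $|1+1/\tau^-(x)|^2=1/(1-x)$, so that the real and imaginary parts of the weight-one letters reproduce exactly $\log(1-x)$ and $q(x)$; the dilogarithm reflection and Landen transformations, which express the weight-two GPLs through $\Li_2(x)$, $\Li_2(-1/\tau^\pm(x))$, and products of logarithms; and the identification of $\Li_2(-1/\tau^+(x))-\Li_2(-1/\tau^-(x))$ as the purely imaginary term displayed in \eqref{eq:Hk-H3k(3k+1)} and \eqref{eq:Hk-H3k(2k-1)}. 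Matching these against the rational prefactors produced by the partial-fraction decomposition yields the five closed forms.

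The main obstacle I anticipate is twofold. In part (b) the weight-two reduction requires tracking the correct branches of $\log\tau^\pm(x)$ and of the dilogarithms across the sign change of $q(x)$ at $x=-2$ (and its vanishing at $x=0$), and organizing the dilogarithm functional equations so that all genuinely level-dependent pieces cancel, leaving only $q(x)$, $\log(1-x)$, and $\Li_2(x)$; getting these cancellations and branch assignments exactly right is the delicate part, and is where the open interval $(-3,c)$ and the constant $c$ enter. In part (a) the product-of-harmonic-numbers case demands careful bookkeeping to certify that every letter occupies a position in the prescribed index set and that the total weight is exactly $r+1+\sum_j r_j$, together with a correct use of Panzer's regularization \cite{Panzer2015} whenever a letter coincides with the endpoint $z=1$. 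By contrast, the convergence, interchange, and factorization steps are routine once the Beta representations and the cubic factorization above are in hand, with the base generating functions supplied by \cite{Sun2022ab}.
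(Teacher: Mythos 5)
Your overall strategy coincides with the paper's: Beta-type integral representations in which inserting $\log t$, $\log(1-t)$, or $\log\frac{t}{1-t}$ (equivalently, differentiating the Beta function in its parameters) manufactures the harmonic-number differences, a justified interchange of sum and integral, the cubic factorization $1-\frac{x^3}{x-1}t^2(1-t)=\frac{x^3}{x-1}\bigl(t-\frac1x\bigr)\bigl(t+\tau^+(x)\bigr)\bigl(t+\tau^-(x)\bigr)$ supplying the letters, GPL recursion and shuffles for part (a), and partial fractions plus dilogarithm functional equations for part (b); your relations $\RE\, G(-\tau^-(x);1)=-\tfrac12\log(1-x)$ and $\I\, G(-\tau^-(x);1)=q(x)$ are also correct and are what the paper uses implicitly. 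However, one step fails as stated: the weight $\frac{1}{3k+2}$ is \emph{not} ``an instance of'' the Beta function. Indeed $\frac{1}{(3k+2)\binom{3k}{k}}=\frac{k!\,(2k)!\,(3k+1)}{(3k+2)!}$, whereas the nearby Beta value $\int_0^1 t^{k}(1-t)^{2k+1}\,\D t=\mathrm B(k+1,2k+2)$ equals $\frac{2k+1}{3k+1}\cdot\frac{1}{(3k+2)\binom{3k}{k}}$, off by a $k$-dependent rational factor, and no choice of exponents removes this defect. So your pipeline stalls exactly at \eqref{eq:Hk-H2k(3k+2)}. The paper's remedy is genuinely extra work: it integrates the \emph{combination} $\bigl(\frac{1}{3k+1}+\frac{1}{3k+2}\bigr)\frac{1}{\binom{3k}{k}}=\frac92\int_0^1[t^2(1-t)]^{k+1}\frac{\D t}{1-t}$ and its logarithmic variants \eqref{eq:3k+2_int}--\eqref{eq:3k+2_int''}, subtracts the already-proved $(3k+1)$-sum \eqref{eq:Hk-H2k(3k+1)}, and, because the harmonic index there is $\mathsf H_{2k+2}$ rather than $\mathsf H_{2k}$, must absorb a correction $\frac{1}{2k+1}+\frac{1}{2k+2}$ whose partial-fraction expansion in $k$ produces a $\frac{1}{k+1}$-series; thus Theorem \ref{thm:3k_q} enters as an essential input, as in \eqref{eq:3k+2patch}. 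None of this is visible in your sketch.

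A second gap concerns \eqref{eq:H_prod_sum}: you cannot obtain the generating function of $\prod_{j=1}^M \mathsf H_k^{(r_j)}$ by applying $\sum_{k\ge1}\mathsf H_k^{(r)}z^k=\frac{\Li_r(z)}{1-z}$ ``repeatedly,'' because a product of harmonic numbers at the same index $k$ does not factor through products of generating functions. One needs a quasi-shuffle (stuffle) expansion of $\prod_j\mathsf H_k^{(r_j)}$ into nested sums, whose generating function then lies in $\mathfrak G^{(z)}_{w;w}[\varnothing;\{1\}]$ with $w=r+\sum_j r_j$; this is precisely \eqref{eq:H_prod_gf}, which the paper imports from \cite{Zhou2022mkMpl} before fibring in $t$ with the kernels $\frac2s+\frac1{s-1}$ and $\frac{1}{s-\frac1x}+\frac{1}{s+\tau^+(x)}+\frac{1}{s+\tau^-(x)}$. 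Finally, a bookkeeping slip in part (a): the distinguished letter from $S_*$ is supplied by the innermost $\Li_1$ factorization \eqref{eq:Li1_GGG}, so it sits at the deepest position ($r+1$ or $r+2$) of the word, while the outer integration $\int_0^1\cdots\frac{\D t}{1-t}$ prepends the letter $1$ --- the opposite of the assignment you describe; with that corrected, your weight count for $\mathfrak{G}^{(1)}_{r+2;r+1}$ agrees with the paper's.
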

\begin{theorem}\label{thm:4kGPL}\begin{enumerate}[leftmargin=*,  label=\emph{(\alph*)},ref=(\alph*),
widest=d, align=left] \item
If $ |4x(1-x)|\geq1$  and $r,r_{1},\dots, r_M\in\mathbb Z_{>0} $,  then we have\begin{align}\begin{split}&
\sum_{k=1}^\infty\frac{\prod_{j=1}^M\mathsf H_{k}^{(r_{j})}}{k^{r+1}\binom{4k}{2k}}\left[ \frac{2^{2}}{x(1-x)} \right]^{k}\\\in{}&{\mathfrak{G}}^{(1)}_{r+1+\sum_{j=1}^M r_j;r+1+\sum_{j=1}^M r_j}\left[\left\{ \xi_{0,0}(x),\xi_{0,1}(x),\xi_{1,0}(x),\xi_{1,1}(x)\right\};\right.\\&{}\left.\left\{ \xi_{0,0}(x),\xi_{0,1}(x),\xi_{1,0}(x),\xi_{1,1}(x)\right\} \right],\label{eq:H_prod_sum4k}
\end{split}
\end{align}where $ \xi_{\ell ,m}(x)\colonequals\frac{1+(-1)^{\ell} \sqrt{x}+(-1)^{m}\sqrt{1-x}}{2}$.
\item If $ |4x(1-x)|>1$, then we have {\allowdisplaybreaks\begin{align}\begin{split}&
\sum_{k=1}^\infty\frac{\mathsf H_{k}}{(4k+1)\binom{4k}{2k}}\left[ \frac{2^{2}}{x(1-x)} \right]^{k}\\={}&-{}\sum_{\ell,\ell',m,m'\in\{0,1\}}\frac{\sqrt{(1-x)x}\big[(-1)^{\ell}\sqrt{1-x}-(-1)^m\sqrt{x}\big]G(\xi_{\ell,m}(x),\xi _{\ell',m'}(x);1)}{4 (2 x-1)},
\end{split}\label{eq:Hk(4k+1)}
\\\begin{split}&\sum_{k=1}^\infty\frac{\mathsf H_{k}}{(4k+3)\binom{4k}{2k}}\left[ \frac{2^{2}}{x(1-x)} \right]^{k}\\={}&\sum_{\ell,\ell',m,m'\in\{0,1\}}\frac{2 [(1-x) x]^{3/2}G(\xi_{\ell,m}(x),\xi_{\ell',m'}(x);1)}{(-1)^{\ell }\sqrt{1-x} +(-1)^m \sqrt{x}-2 \sqrt{(1-x) x}-(-1)^{\ell +m }}\\{}&+\sum_{\ell,\ell',m,m'\in\{0,1\}}\frac{\sqrt{(1-x)x}\big[(-1)^{\ell}\sqrt{1-x}-(-1)^m\sqrt{x}\big]G(\xi_{\ell,m}(x),\xi_{\ell',m'}(x);1)}{4 (2 x-1)},\label{eq:Hk(4k+3)}
\end{split}\end{align}} where the GPLs are defined recursively by \eqref{eq:GPL_rec}.
\end{enumerate}
\end{theorem}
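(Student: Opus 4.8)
The plan is to treat Theorem~\ref{thm:4kGPL} as the $\bi{4k}{2k}$-analogue of Theorem~\ref{thm:3kGPL}, with the quartic that governs the $\bi{4k}{2k}$ series playing the role of the cubic behind the $\bi{3k}k$ series. The starting point is the Eulerian Beta integral
\[
\f1{(4k+1)\bi{4k}{2k}}=\int_0^1[t(1-t)]^{2k}\,\D t,\qquad \f1{\bi{4k}{2k}}=(4k+1)\int_0^1[t(1-t)]^{2k}\,\D t,
\]
which turns $\sum_k(\cdots)\big[\f{2^2}{x(1-x)}\big]^k\big/\bi{4k}{2k}$ into an integral over $t\in[0,1]$ of a power series in $\rho(t)\colonequals\f{4[t(1-t)]^2}{x(1-x)}$. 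The decisive algebraic fact is that the denominator produced by summing the geometric-type series factors over the four points $\xi_{\ell,m}(x)$:
\[
1-\rho(t)=\f{x(1-x)-4[t(1-t)]^2}{x(1-x)}=\f{-4}{x(1-x)}\prod_{\ell,m\in\{0,1\}}\big(t-\xi_{\ell,m}(x)\big),
\]
because $4[t(1-t)]^2=x(1-x)$ forces $t^2-t=\mp\tfrac12\sqrt{x(1-x)}$, whose four solutions are exactly $\xi_{\ell,m}(x)=\tfrac12\big(1+(-1)^\ell\sqrt x+(-1)^m\sqrt{1-x}\big)$ once one uses $1\mp2\sqrt{x(1-x)}=(\sqrt x\mp\sqrt{1-x})^2$. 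Thus the $t$-singularities of the integrand sit precisely at $\{\xi_{\ell,m}(x)\}$ together with the endpoints $0,1$, which is what pins down the letter set of the target space $\mathfrak G^{(1)}$.

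For part~(a) I would first record the (known, letter-$\{0,1\}$) Euler-sum fact that $\Psi(z)\colonequals\sum_{k\ge1}\f{(4k+1)\prod_{j=1}^M\mathsf H_k^{(r_j)}}{k^{r+1}}z^k$ is a $\mathbb Q$-combination of GPLs $G(\alpha_1,\dots;z)$ with $\alpha_i\in\{0,1\}$ of weight at most $r+1+\sum_{j}r_j$, assembled by the standard operators $z\,\tfrac{\D}{\D z}$, $\int_0^z\tfrac{\D s}{s}$ and multiplication by $-\tfrac{\log(1-z)}{1-z}$ (resp.\ higher $\Li_{r_j}$). Composing with the rational pullback $z=\rho(t)$ preserves the weight and replaces each inner letter $0,1$ by its $\rho$-preimages: $\rho(t)=0$ gives $t\in\{0,1\}$ and $\rho(t)=1$ gives $t\in\{\xi_{\ell,m}(x)\}$, so $\Psi(\rho(t))$ is a GPL in $t$ with letters in $\{0,1\}\cup\{\xi_{\ell,m}(x)\}$. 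The final step is $\int_0^1\Psi(\rho(t))\,\D t$: integrating by parts against $\D t$ and partial-fractioning the weight-$0$ rational function $t\,\rho'(t)/(\rho(t)-\beta)$ over the $\xi_{\ell,m}(x)$ converts each contribution into a $\D\log$ form, yielding GPLs at $z=1$ whose outermost letter lies in $\{\xi_{\ell,m}(x)\}$; the boundary contributions vanish since $\rho(0)=\rho(1)=0$. This gives the membership \eqref{eq:H_prod_sum4k}, with Panzer's regularization \cite{Panzer2015} absorbing any outer letter equal to $1$.

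Part~(b) is the explicit specialization $r=0$, $M=1$, $r_1=1$, where the inner sum collapses via $\sum_{k\ge1}\mathsf H_k\rho^k=-\log(1-\rho)/(1-\rho)$. For the $(4k+1)$-series I would use $\int_0^1[t(1-t)]^{2k}\,\D t=\f1{(4k+1)\bi{4k}{2k}}$ directly, so the integrand is $-\log(1-\rho(t))/(1-\rho(t))$; partial-fractioning $1/(1-\rho(t))$ and expanding $\log(1-\rho(t))$ through the factorization above produces terms $\int_0^1\f{\log(t-\xi_{\ell',m'})}{t-\xi_{\ell,m}}\,\D t$, i.e.\ the weight-$2$ GPLs $G(\xi_{\ell,m},\xi_{\ell',m'};1)$ of \eqref{eq:Hk(4k+1)}, the partial-fraction residues supplying the rational prefactors $\pm\f{\sqrt{(1-x)x}\,[(-1)^\ell\sqrt{1-x}-(-1)^m\sqrt x]}{4(2x-1)}$. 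The $(4k+3)$-series is handled identically after inserting $\int_0^1 s^{4k+2}\,\D s=\f1{4k+3}$ (equivalently, by writing $\f1{4k+3}$ through $\f1{4k+1}$): this reproduces the $(4k+1)$ combination and contributes the additional GPLs whose denominators $(-1)^\ell\sqrt{1-x}+(-1)^m\sqrt x-2\sqrt{(1-x)x}-(-1)^{\ell+m}$ appear in \eqref{eq:Hk(4k+3)}.

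The main obstacle I anticipate is not the factorization but ensuring that the weight comes out \emph{exactly} $r+1+\sum_j r_j$, i.e.\ that the result lands in the homogeneous space $\mathfrak G^{(1)}_{N;N}$ rather than a filtered one. Both the $(4k+1)$ prefactor and the plain-$\D t$ integration manufacture subleading-weight and purely rational remainders, and these must cancel. The cancellation is in fact structural: the integration-by-parts identity $t\,\rho'(t)/\rho(t)=4-2/(1-t)$ feeds the constant $4$ that annihilates the weight-lowering piece coming from $\tfrac{4k}{k^{r+1}}=\tfrac4{k^{r}}$, leaving only the genuine $\D\log$ contribution; verifying this in general, for arbitrary products $\prod_j\mathsf H_k^{(r_j)}$ and all $r$, is the crux. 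Secondary technical points are justifying the interchange of summation and integration and the endpoint behavior at the edge $|4x(1-x)|=1$ in part~(a) (while part~(b) needs the strict inequality), and the consistent cancellation of the spurious logarithmic constants $\log\tfrac{-4}{x(1-x)}$, $\log(-\xi_{\ell,m})$ in part~(b) so that only honest GPLs $G(\xi_{\ell,m},\xi_{\ell',m'};1)$ survive. As a cross-check I would confirm that the $r=1$, $M=0$ instance of this mechanism reproduces the closed form $-2\big(\tanh^{-1}\tfrac1{\sqrt x}\big)^2-2\big(\tanh^{-1}\tfrac1{\sqrt{1-x}}\big)^2$ of Theorem~\ref{thm:conj1.4}.
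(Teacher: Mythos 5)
Your proposal is correct and, in substance, follows the same route as the paper: Beta-integral representations, the factorization $x(1-x)-4[t(1-t)]^2=-4\prod_{\ell,m\in\{0,1\}}\bigl(t-\xi_{\ell,m}(x)\bigr)$, pullback of the letters $\{0,1\}$ under $\rho(t)$, and partial fractions plus the GPL recursion \eqref{eq:GPL_rec}. The one real difference is your entry point for part (a): you start from \eqref{eq:(4k,2k)a'} (plain $\D t$, hence the multiplier $4k+1$), which creates the weight-inhomogeneity you call ``the crux'', whereas the paper starts from \eqref{eq:(4k,2k)a}, namely $\frac{1}{k\binom{4k}{2k}}=2\int_0^1[t(1-t)]^{2k}\frac{\D t}{1-t}$, whose built-in kernel $\frac{\D t}{1-t}$ makes the weight bookkeeping automatic: the inner sum $\sum_{k\geq1}\frac{\prod_j\mathsf H_k^{(r_j)}}{k^r}\rho(t)^k$ is homogeneous of weight $w=r+\sum_j r_j$ with \emph{innermost} letter in $\{\xi_{\ell,m}(x)\}$ (note the constraint in $\mathfrak G^{(1)}_{N;N}$ with $k_*=k=N$ is on $\alpha_N$, the letter nearest $0$, not on the outermost letter as you wrote), and the final $\frac{\D t}{1-t}$ integration prepends exactly one letter. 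Your worry in fact dissolves by your own observation: writing $\Psi=4\Phi_r+\Phi_{r+1}$ with $\Phi_s(z)=\sum_{k\geq1}\frac{\prod_j\mathsf H_k^{(r_j)}}{k^s}z^k$, using $z\Phi_{r+1}'(z)=\Phi_r(z)$ and $t\rho'(t)/\rho(t)=4-\frac{2}{1-t}$, integration by parts gives $\int_0^1\Psi(\rho(t))\,\D t=2\int_0^1\Phi_r(\rho(t))\frac{\D t}{1-t}$ exactly, uniformly in $r,M,r_j$ --- i.e.\ your IBP maneuver is nothing but the derivation of \eqref{eq:(4k,2k)a} from \eqref{eq:(4k,2k)a'}, and can even be done termwise in $k$ before any harmonic numbers enter, so no case-by-case verification is needed. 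Similarly in part (b), the paper realizes your ``$\frac{1}{4k+3}$ through $\frac{1}{4k+1}$'' step concretely via $\frac{1}{(4k+3)\binom{4k}{2k}}=\int_0^1[t(1-t)]^{2k}\,[16t^2(1-t)-1]\,\D t$, and it sidesteps your spurious constants $\log\frac{-4}{x(1-x)}$ and $\log(-\xi_{\ell,m})$ by expanding $\Li_1(\rho(t))=-\sum_{\ell,m}G(\xi_{\ell,m}(x);t)$ in the normalized factors $1-t/\xi_{\ell,m}(x)$, which carry no additive constant since both sides vanish at $t=0$.
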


Note that  in the definition of Goncharov's \cite{Goncharov1997,Goncharov1998} multiple polylogarithm (MPL)\begin{align}
\Li_{a_1,\dots,a_n}(z_1,\dots,z_n)\colonequals \sum_{\ell_{1}>\dots>\ell_{n}>0}\prod_{j=1}^n\frac{z_{j}^{\ell_{j}}}{\ell_j^{a_j}},
\label{eq:Mpl_defn}\end{align}the right-hand side  converges absolutely for  $ a_1,\dots,a_n\in\mathbb Z_{>0}, \prod_{j=1}^m|z_j|<1,m\in\mathbb Z\cap[1,n]$. The GPLs  give analytic continuations of MPLs, via the following  MPL-GPL correspondence (see \cite[(1.3)]{Panzer2015} or \cite[(2.5)]{Frellesvig2016}){\begin{align} \Li_{a_1,\dots,a_n}(z_1,\dots,z_n)=(-1)^{n}G\left(\smash[b]{\underset{a_1-1 }{\underbrace{0,\dots,0 }}},\frac{1}{z_{1}},\smash[b]{\underset{a_2-1 }{\underbrace{0,\dots,0 }}},\frac{1}{z_{1}z_2},\dots,\smash[b]{\underset{a_n-1 }{\underbrace{0,\dots,0 }}},\frac{1}{\prod_{j=1}^nz_j};1\right)\label{eq:MPL_GPL}\\[-6pt]\notag\end{align}}when  $ \prod_{j=1}^nz_j\neq0$. As a consequence, for meticulously chosen values of $x$, special cases of Theorems \ref{thm:3kGPL} and \ref{thm:4kGPL} bring us explicit close-form evaluations of infinite series, through  the  $ \mathbb Q$-vector space for cyclotomic multiple zeta values (CMZVs)\footnote{In the literature \cite{Zhao2016MZVbook,SingerZhao2020,Au2022a}, one may find various other notations  for $ \mathfrak Z_k(N) $, including $ \mathsf {CMZV}(k,N)$, $ \mathsf{CMZV}_{k,N}$, and $ \mathsf{CMZV}_k^N$. }  of weight $k\in\mathbb Z_{>0}$ and level $ N\in\mathbb Z_{>0}$ \begin{align}
\begin{split}
\mathfrak Z_{ k}(N)\colonequals{}&\Span_{\mathbb Q}\left\{\Li_{a_1,\dots,a_n}(z_1,\dots,z_n)\left|\begin{smallmatrix}a_1,\dots,a_n\in\mathbb Z_{>0}\\z_{1}^{N}=\dots=z_n^N=1\\(a_1,z_1)\neq(1,1)\\\sum _{j=1}^{n}a_{j}=k\end{smallmatrix}\right. \right\}\\\xlongequal{\text{\eqref{eq:MPL_GPL}}}{}&\Span_{\mathbb Q}\left\{G(z_1,\dots,z_k;1)\left|\begin{smallmatrix}z_1^N,\dots,z_{k}^N\in\{0,1\}\\z_1\neq1,z_{k}\neq0\end{smallmatrix}\right.\right\}.
\end{split}\label{eq:Zk(N)_defn}
\end{align}  The corresponding results are summarized in the two corollaries below (to be proved in \S\ref{subsec:CMZVchar}). \begin{corollary}\label{cor:3kCMZV}Set \begin{align}\mathsf
S_{3,r}\left(a_k;z\right)\colonequals\sum_{k=1}^\infty\frac{a_{k}z^k}{k^{r+1}\binom{3k}k}
\end{align}and\begin{align}
\mathfrak r_{\nu}\colonequals\frac{\left[1-4\cos^2(\nu\pi)\right]^{3}}{-4\cos^2(\nu\pi)}=\frac{16\cos^{3}\left( \nu\pi+\frac{\pi}{6}\right) \cos^{3}\left( \nu\pi-\frac{\pi}{6}\right)}{\cos^2(\nu\pi)}.
\end{align}For $ N\in\{ 4,5,6,7,8,9,10,12\}$ and  $r\in\mathbb Z_{>0} $,   we have \begin{align}
\mathsf S_{3,r}\left( a_k ;\mathfrak r_{m/N}\right)\in{}&\mathfrak Z_{r+2}(N),\label{eq:3kZ(N)}
\end{align}where $m\in\mathbb Z $, $0<|\mathfrak r_{m/N}|<\frac{3^3}{2^2} $, and  $ (a_k)_{k\in\mathbb Z_{>0}}\in\big\{\big(\frac1k\big)_{k\in\mathbb Z_{>0}},$  $(\mathsf H_{k}-\mathsf H_{3k})_{k\in\mathbb Z_{>0}},$ $(\mathsf H_{2k}-\mathsf H_{3k})_{k\in\mathbb Z_{>0}}\big\}$. For any $ N\in\mathbb Z_{>0}$,  with the same constraints on $m$ and $(a_k)_{k\in\mathbb Z_{>0}}$  as above, one has the  following variation on  \eqref{eq:3kZ(N)}:\begin{align}
\mathsf S_{3,r}\left( a_k ;\mathfrak r_{m/N}\right)\in{}&\mathfrak Z_{r+2}(\lcm(6, N)),\label{eq:3kZ(N)'}\tag{\ref{eq:3kZ(N)}$'$}
\end{align}where $ \lcm$ stands for ``least common multiple''.
Furthermore, one has \begin{align}
\mathsf S_{3,r}\left(a_k;\frac{3^{3}}{2^{2}}\right)\in{}&\mathfrak Z_{r+2}(6),\label{eq:3kZ(6)'}
\end{align}when  $ (a_k)_{k\in\mathbb Z_{>0}}\in\big\{\big(\frac1k\big)_{k\in\mathbb Z_{>0}},$  $(\mathsf H_{k}-\mathsf H_{3k})_{k\in\mathbb Z_{>0}},$ $(\mathsf H_{2k}-\mathsf H_{3k})_{k\in\mathbb Z_{>0}},$ $(\mathsf H_k)_{k\in\mathbb Z_{>0}}\big\}$.\end{corollary}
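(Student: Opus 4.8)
The plan is to exhibit each $\mathfrak r_{m/N}$ as $x^3/(x-1)$ for an explicit real $x$, invoke Theorem~\ref{thm:3kGPL}(a), and then reduce the resulting GPL-span to a space of CMZVs. Put $\omega\colonequals e^{2\pi i m/N}$ and take $x\colonequals-1-\omega-\omega^{-1}=-1-2\cos(2\pi m/N)=-\frac{\omega^2+\omega+1}{\omega}$, which is real. Then $x-1=-\frac{(\omega+1)^2}{\omega}$ and $x^3=-\frac{(\omega^2+\omega+1)^3}{\omega^3}$, so $\frac{x^3}{x-1}=\frac{(\omega^2+\omega+1)^3}{\omega^2(\omega+1)^2}$; writing $t=2\cos(m\pi/N)$ one has $t^2-1=\frac{\omega^2+\omega+1}{\omega}$ and $t^2=\frac{(\omega+1)^2}{\omega}$, whence $\frac{x^3}{x-1}=\frac{(t^2-1)^3}{t^2}=\mathfrak r_{m/N}$. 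Because $x\mapsto x^3/(x-1)$ is strictly decreasing on $[-3,c]$ and carries it onto $[-27/4,27/4]$ with $|{\cdot}|=27/4$ only at $x\in\{-3,c\}$, the hypothesis $0<|\mathfrak r_{m/N}|<\frac{3^3}{2^2}$ forces our $x$ into $(-3,c)\smallsetminus\{0\}\subset[-3,c]\smallsetminus\{0\}$. Thus Theorem~\ref{thm:3kGPL}(a) applies and places $\mathsf S_{3,r}(a_k;\mathfrak r_{m/N})$ in $\mathfrak G^{(1)}_{r+2;r+1}[\varnothing;\{1/x,-\tau^+(x),-\tau^-(x)\}]$: it is a $\mathbb Q$-combination of weight-$(r+2)$ GPLs $G(\alpha_1,\dots,\alpha_{r+2};1)$ whose letters lie in $\{0,1\}$ save for one special letter.

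Next I would compute the special letters. Using $\tau^+(x)\tau^-(x)=\frac{1-x}{x^2}$, $\tau^+(x)+\tau^-(x)=\frac{1-x}{x}$ and $\sqrt{(1-x)(3+x)}=\pm\frac{i(\omega^2-1)}{\omega}$, one obtains, as an unordered set,
\[
\{1/x,-\tau^+(x),-\tau^-(x)\}=\Big\{\tfrac{-\omega}{\omega^2+\omega+1},\tfrac{\omega+1}{\omega^2+\omega+1},\tfrac{\omega(\omega+1)}{\omega^2+\omega+1}\Big\}=\Big\{\tfrac{\omega-\omega^2}{\omega^3-1},\tfrac{\omega^2-1}{\omega^3-1},\tfrac{\omega^3-\omega}{\omega^3-1}\Big\}.
\]
Hence every singular point of the underlying iterated integral, namely $\{0,1,1/x,-\tau^\pm(x)\}$, is a ratio of two differences of powers of the root of unity $\omega=\zeta_N^m$, so all the GPLs above live over the cyclotomic field $\mathbb Q(\zeta_N)$.

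The crux is to turn these GPLs into honest CMZVs in the sense of \eqref{eq:Zk(N)_defn}. I would apply the transformation calculus for iterated integrals — scaling invariance $G(\lambda\alpha_1,\dots,\lambda\alpha_{r+2};\lambda)=G(\alpha_1,\dots,\alpha_{r+2};1)$ (modulo shuffle-regularization of trailing zeros), path reversal, the shuffle product, and Möbius changes of variable — to move the configuration $\{0,1,1/x,-\tau^\pm(x),\infty\}$ onto roots of unity, and then path composition/regularization to restore the endpoints to $0$ and $1$. Since $\omega^3-1=(\omega-1)(\omega-\zeta_3)(\omega-\zeta_3^2)$, the transformed letters involve, beyond $\omega=\zeta_N^m$, the cube root $\zeta_3$ and the sign $-1$, and the roots of unity thereby generated lie in $\mu_{\lcm(6,N)}$; this yields \eqref{eq:3kZ(N)'} for every $N$. (When $\gcd(m,N)>1$ the order of $\omega$ drops and the level only decreases, which is harmless because $\mathfrak Z_{r+2}(M)\subseteq\mathfrak Z_{r+2}(\lcm(6,N))$ whenever $M\mid\lcm(6,N)$.)

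To sharpen the level from $\lcm(6,N)$ to $N$ for the eight distinguished values $N\in\{4,5,6,7,8,9,10,12\}$, as asserted in \eqref{eq:3kZ(N)}, I would treat these cases one at a time, using that the level-$\lcm(6,N)$ CMZVs actually produced satisfy known $\mathbb Q$-linear relations — available from the motivic theory of \cite{Goncharov1998} and the explicit CMZV relation tables of \cite{Au2022a} — that re-express them at level $N$; the Möbius invariant governing the configuration, the cross ratio $-(\omega^{-1}+\omega^{-2})\in\mathbb Q(\zeta_N)$ of $(0,1,\beta,\infty)$, keeps the whole reduction over $\mathbb Q(\zeta_N)$. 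Finally, \eqref{eq:3kZ(6)'} is the boundary case $\mathfrak r=\frac{3^3}{2^2}$, i.e.\ $\omega=1$ and $x=-3$: there $\{1/x,-\tau^\pm(x)\}$ degenerates to the rationals $\{-\tfrac13,\tfrac23\}$, so every letter lies in $\{0,1,-\tfrac13,\tfrac23\}$ and reduces to level-$6$ CMZVs by classical polylogarithm identities. Because these letters are still rational, the companion inclusion \eqref{eq:H_prod_sum} of Theorem~\ref{thm:3kGPL}(a) (taken with $M=1$, $r_1=1$) places $\mathsf S_{3,r}(\mathsf H_k;\frac{3^3}{2^2})$ in the same alphabet $\{0,1,-\tfrac13,\tfrac23\}$, which is exactly why $(\mathsf H_k)_k$ may be adjoined to the admissible sequences only at this point. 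I expect the reduction of the third paragraph, and above all the case-by-case collapse to level $N$ for the eight special values, to be the main obstacle: there the soft functoriality of iterated integrals no longer suffices, and one must exploit the arithmetic coincidences peculiar to those levels.
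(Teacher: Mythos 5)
Your first two paragraphs do match the paper's setup: the parametrization $x=-1-2\cos(2\pi m/N)$, the monotonicity argument forcing $x\in(-3,c)\smallsetminus\{0\}$, the appeal to Theorem \ref{thm:3kGPL}(a), and the identification of the special letters $\bigl\{\tfrac{-\omega}{\omega^2+\omega+1},\tfrac{\omega+1}{\omega^2+\omega+1},\tfrac{\omega(\omega+1)}{\omega^2+\omega+1}\bigr\}$ with $\omega=e^{2\pi im/N}$ are all correct, and are equivalent to the factorization of $1-\mathfrak r_\nu t^2(1-t)$ over $\mathbb Q\big(\omega,e^{2\pi i/3}\big)$ displayed in the paper's proof. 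The proof breaks down exactly where you predict it would, in two places. First, for \eqref{eq:3kZ(N)'} you assert that scaling, path reversal, shuffle, M\"obius substitution and path composition suffice to move the singular configuration $\{0,1,\infty,\tfrac1x,-\tau^\pm(x)\}$ onto roots of unity. This toolkit cannot do that: you exhibit no such map, and none exists among your operations --- a M\"obius transformation is fixed by three points, the letters themselves are not cyclotomic for general $N$, and the natural candidate substitutions fail (e.g.\ $t=\tfrac{w(w+1)}{w^2+w+1}$ rationalizes one letter but turns the other two into $-\tfrac1{\omega+1}$ and $-\tfrac{\omega}{\omega+1}$). The paper's device is different in kind: it is a \emph{fibration} in Panzer's sense \cite[Lemma 2.14 and Corollary 3.2]{Panzer2015} --- the GPLs are rewritten as GPLs whose \emph{argument} is the root of unity $\omega$ and whose letters are the constants $\{0,1,-1,e^{\pm2\pi i/3}\}$ --- after which \eqref{eq:3kZ(N)'} follows from the endpoint-at-a-root-of-unity characterization \eqref{eq:Zk(N)_defn'}; there is no need to ``restore the endpoints to $0$ and $1$''. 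Note also that this fibration stays cyclotomic only because each GPL in \eqref{eq:3k_inv_sum1} contains a single letter outside $\{0,1\}$; when two special letters occur, as in \eqref{eq:H_prod_sum}, forms $\D\log$ of differences of special letters enter, with poles at the non-cyclotomic points $\omega=-\tfrac12$ and $\omega=-2$. This is precisely why $(\mathsf H_k)$ is admitted only at $x=-3$, and it shows that your heuristic ``the whole reduction stays over $\mathbb Q(\zeta_N)$'' cannot carry any weight: CMZV spaces are not characterized by fields of definition.

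Second, and decisively, for the headline claim \eqref{eq:3kZ(N)} at $N\in\{4,5,6,7,8,9,10,12\}$ you offer only the hope that the level-$\lcm(6,N)$ quantities satisfy known relations collapsing them to level $N$, and you concede that this collapse is an obstacle you cannot overcome; since that collapse \emph{is} the content of \eqref{eq:3kZ(N)}, the proposal does not prove the corollary. The paper never descends from level $\lcm(6,N)$ at all: it certifies level $N$ directly and algorithmically, by running Au's test (the \texttt{IterIntDoableQ} routine of \cite{Au2022a}) on the alphabet $\bigl\{0,1,\tfrac1x,-\tau^+(x),-\tau^-(x)\bigr\}$ at the explicit algebraic points $x=-1$, $-\tfrac{1\pm\sqrt5}2$, $-2$, $-(1\pm\sqrt2)$, $-\bigl(\tfrac{1\pm\sqrt5}2\bigr)^2$, $-(1\pm\sqrt3)$ for $N=4,5,6,8,10,12$, again for the values of $x$ attached to $N=7,9$, and at $x=-3$ for \eqref{eq:3kZ(6)'}, where the letters degenerate to $\{-\tfrac13,\tfrac23\}$ (your treatment of this last case via \eqref{eq:H_prod_sum} is structurally right, but ``classical polylogarithm identities'' is again an appeal to unproduced relations, where the paper invokes the same algorithmic certification). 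In short, you reproduce the paper's reduction to Theorem \ref{thm:3kGPL}(a) but replace both of its actual reduction mechanisms --- fibration onto a cyclotomic base point, and computer-certified level membership --- by transformations that are insufficient and by relations that are conjectured rather than supplied.
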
\begin{corollary}\label{cor:4kCMZV}Set \begin{align}\mathsf
S_{4,r}\left(a_k;z\right)\colonequals\sum_{k=1}^\infty\frac{a_{k}z^k}{k^{r+1}\binom{4k}{2k}}.
\end{align}For $r\in\mathbb Z_{>0} $,  we have \begin{align}\mathsf
S_{4,r}\big(\mathsf H_k;2^4\big)\in{}&\mathfrak Z_{r+2}(8),\\\mathsf
S_{4,r}\big(\mathsf H_k;2^2\big)\in{}&\mathfrak Z_{r+2}(12).
\end{align}

\end{corollary}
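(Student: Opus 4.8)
The plan is to read both memberships off as the single instance $M=1$, $r_1=1$ of Theorem~\ref{thm:4kGPL}(a), in which $\prod_{j=1}^M\mathsf H_k^{(r_j)}=\mathsf H_k$, and then to pin down the value of $x$ producing the prescribed argument $z=\frac{2^2}{x(1-x)}$. For $z=2^4$ one solves $x(1-x)=\tfrac14$, giving the boundary value $x=\tfrac12$, so that $|4x(1-x)|=1$: the limiting case is still covered by the hypothesis, and convergence of $\sum_k\mathsf H_kz^k/(k^{r+1}\binom{4k}{2k})$ is secured by $\binom{4k}{2k}\sim 16^k/\sqrt{2\pi k}$ together with $r\geq1$. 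For $z=2^2$ one solves $x(1-x)=1$, giving $x=\frac{1\pm i\sqrt3}{2}=e^{\pm i\pi/3}$ with $|4x(1-x)|=4>1$. Theorem~\ref{thm:4kGPL}(a) then places each series in $\mathfrak G^{(1)}_{r+2;r+2}[\{\xi_{\ell,m}(x)\};\{\xi_{\ell,m}(x)\}]$, a $\mathbb Q$-span of length-$(r+2)$ GPLs at argument $1$ whose letters lie in $\{0,1\}\cup\{\xi_{\ell,m}(x)\}$ with the final letter among the $\xi_{\ell,m}(x)$; the weight $r+2$ already matches that of the targets $\mathfrak Z_{r+2}(N)$.

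Next I would evaluate the letters. Writing $2\xi_{\ell,m}(x)-1=(-1)^\ell\sqrt x+(-1)^m\sqrt{1-x}$ and performing the affine normalization $u=2t-1$ on the GPL integration variable, the alphabet $\{0,1\}\cup\{\xi_{\ell,m}(x)\}$ becomes $\{0,\pm1,\pm\sqrt2\}$ when $x=\tfrac12$ and $\{\pm1,\pm\sqrt3,\pm i\}$ when $x=e^{i\pi/3}$, the argument staying at $u=1$. (Under $u=2t-1$ the letter $0$ maps to $-1$ and the letter $1$ to $1$, while $\xi=\tfrac12$ maps to $0$ and $\xi=\frac{1\pm i}{2}$ to $\pm i$; the accompanying shift of the base point from $0$ to $-1$ is reabsorbed by the path-composition identity $\int_{-1}^{w}=\int_0^w-\int_0^{-1}$ at the cost of extra GPLs of the same alphabet evaluated at $\pm1$.) The decisive algebraic observation is $\sqrt2=\zeta_8+\zeta_8^{-1}=2\cos\frac\pi4$, $\sqrt3=\zeta_{12}+\zeta_{12}^{-1}=2\cos\frac\pi6$, and $i=\zeta_{12}^3$, so that the two alphabets generate exactly $\mathbb Q(\sqrt2)=\mathbb Q(\zeta_8)^{+}$ of conductor $8$ and $\mathbb Q(i,\sqrt3)=\mathbb Q(\zeta_{12})$ of conductor $12$; this is what fixes the levels $N=8$ and $N=12$.

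The main obstacle is the final reduction: turning GPLs whose singular letters include the real-quadratic points $\pm\sqrt2$ (resp.\ $\pm\sqrt3$) into $\mathbb Q$-linear combinations of honest level-$N$ CMZVs, i.e.\ GPLs at argument $1$ all of whose letters are $N$-th roots of unity or $0$. This cannot be achieved by any single scaling or Möbius substitution: the Cayley transform $u\mapsto\frac{u-i}{u+i}$, for instance, sends $\{0,\pm1\}$ to $\{-1,i,-i\}\subset\mu_8$ but sends $\sqrt2$ to $\frac{1-2\sqrt2\,i}{3}$, which lies on the unit circle yet is not a root of unity. The reduction must therefore invoke the deeper functional-equation and fibration apparatus for GPLs developed in \S\ref{subsec:CMZVchar}, fed by the cyclotomic-descent input $\sqrt2\in\mathbb Q(\zeta_8)$, $\sqrt3,i\in\mathbb Q(\zeta_{12})$, to express such iterated integrals over the letters $\{0,\pm1,\pm\sqrt2\}$ (resp.\ $\{\pm1,\pm\sqrt3,\pm i\}$) in the CMZV basis of the corresponding level.

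Finally I would dispose of the bookkeeping: GPLs with leading letter $\alpha_1=1$ are divergent and are read through Panzer's logarithmic regularization, under which the regularized values remain in $\mathfrak Z_{r+2}(N)$. Assembling the pieces yields $\mathsf S_{4,r}(\mathsf H_k;2^4)\in\mathfrak Z_{r+2}(8)$ and $\mathsf S_{4,r}(\mathsf H_k;2^2)\in\mathfrak Z_{r+2}(12)$.
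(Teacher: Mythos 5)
Your proposal is correct and follows essentially the same route as the paper: the paper's proof consists precisely of specializing Theorem~\ref{thm:4kGPL}(a) (with $M=1$, $r_1=1$) to $x=\tfrac12$ and $x=e^{\pi i/3}$ and then invoking ``Au's test,'' i.e.\ the automated GPL-fibration machinery of \S\ref{subsec:CMZVchar}, to certify membership in $\mathfrak Z_{r+2}(8)$ and $\mathfrak Z_{r+2}(12)$. Your extra detail---the affine normalization of the alphabets to $\{0,\pm1,\pm\sqrt2\}$ and $\{\pm1,\pm\sqrt3,\pm i\}$ and the cyclotomic identification $\sqrt2\in\mathbb Q(\zeta_8)$, $\sqrt3,i\in\mathbb Q(\zeta_{12})$---is a sound explanation of why those levels emerge, and you correctly recognize that the actual reduction is delegated to the same fibration apparatus the paper relies on.
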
As examples for Corollary \ref{cor:3kCMZV}, we have (cf.\ \cite[Remark 1.4]{Zhou2023SunCMZV})\begin{align}
\sum_{k=1}^\infty\frac{1}{k^{2}2^k\binom{3k}k}={}&\displaystyle \frac{\pi ^2}{24}-\frac{\lambda^{2}}{2}\in\mathfrak Z_2(2)\subset\mathfrak Z_2(4),\\\sum_{k=1}^\infty\frac{1}{k^{3}2^k\binom{3k}k}={}&{}\displaystyle -\frac{33 \zeta (3)}{16}+\pi  G+\frac{\lambda ^3}{6}-\frac{\pi ^2 \lambda}{24}\in\mathfrak Z_3(4),\\\sum_{k=1}^\infty\frac{1}{k^{4}2^k\binom{3k}k}={}&{}\displaystyle -\frac{21\Li_{3,1}(-1,1)}{4} - \pi  \I\Li_{2,1}(1,i)+\frac{33  \zeta (3) \lambda}{16}\\&{}-\frac{\pi  G \lambda }{2}-\frac{\lambda ^4}{24}+\frac{\pi ^2 \lambda ^2}{48}+\frac{\pi ^4}{60}\in\mathfrak Z_4(4),\\\sum_{k=1}^\infty\frac{1}{k^{5}2^k\binom{3k}k}={}&\frac{51\Li_{3,1,1}(-1,1,1)}{4} -\frac{1107 \zeta (5)}{128}+\frac{21\Li_{3,1}(-1,1)\lambda}{4} +\frac{4\pi\I\Li_{2,1,1}(1,1,i)}{3}\\&{}+\frac{13\pi\I\Li_4(i)}{3}+\frac{\pi   \lambda\I\Li_{2,1}(1,i)}{3}-\frac{33 \lambda ^2 \zeta (3)}{32}-\frac{191 \pi ^2 \zeta (3)}{192}\\&{}+\frac{\pi  G \lambda ^2}{6} +\frac{2 \pi ^3 G}{9}+\frac{\lambda ^5}{120}-\frac{\pi ^2 \lambda ^3}{144}-\frac{\pi ^4 \lambda }{60}\in\mathfrak Z_4(5).
\end{align}Here, one notes that $ \lambda\colonequals\log 2=-\Li_1(-1)\in\mathfrak Z_1(2)\subset\mathfrak Z_1(4) $, $ \pi i=2[\Li_1(i)-\Li_1(-i)]\in\mathfrak Z_1(4)$, $ G\colonequals \I \Li_2(i)\in i\mathfrak Z_2(4)$, $ \zeta(3)\colonequals \Li_3(1)\in\mathfrak Z_3(1)\subset\mathfrak Z_3(4)$, and  $ \zeta(5)\colonequals \Li_5(1)\in\mathfrak Z_5(1)\subset\mathfrak Z_5(4)$, while $z_j\in\mathfrak Z_j(4) $ and $z_k\in\mathfrak Z_k(4)$ together entail $ z_jz_k\in\mathfrak Z_{j+k}(4)$     \cite[\S1.2]{Goncharov1998}.

 For other small weights $k\in\mathbb Z_{>0}$ and levels $ N\in\mathbb Z_{>0}$, such  $\mathfrak Z_{ k}(N) $ characterizations  boil down to explicitly computable expressions in Au's \texttt{MultipleZetaValues} package \cite{Au2022a}, as illustrated by Tables \ref{tab:Zk(5)3k}--\ref{tab:Zk(8,12)4k} of this article.

\section{Logarithmic reductions of certain inverse binomial sums\label{sec:log_red}}

\subsection{Some integrals related to binomial coefficients\label{subsec:3k4kintrepn}}Using Euler's  gamma function $\Gamma(s) \colonequals\int_0^\infty t^{s-1}e^{-t}\D t$ for $ \RE s>0$, Euler's digamma  function $ \psi^{(0)}(s)\colonequals\frac{\D}{\D s}\log\Gamma(s)$,  and Euler's beta function $ \mathrm B(a,b)=\frac{\Gamma(a)\Gamma(b)}{\Gamma(a+b)}=\int_0^1 t^{a-1}(1-t)^{b-1}\D t$,  one can verify the following integral formulae for  $ k\in\mathbb Z_{\geq0}$:{\allowdisplaybreaks
\begin{align}\frac{1}{(3k+1)\binom{3k}k}={}&\int_0^1[t^{2}(1-t)]^{k}\D t,\label{eq:(3k,k)a'}\\
\frac{\mathsf H_{2k}-\mathsf H_{3k+1}}{(3k+1)\binom{3k}k}={}&\int_0^1[t^{2}(1-t)]^{k}\log t\D t,\label{eq:(3k,k)b'}\\\frac{\mathsf H_{k}-\mathsf H_{3k+1}}{(3k+1)\binom{3k}k}={}&\int_0^1[t^{2}(1-t)]^{k}\log (1-t)\D t,\label{eq:(3k,k)c'}\\\frac{1}{(4k+1)\binom{4k}{2k}}={}&\int_0^1[t(1-t)]^{2k}\D t,\label{eq:(4k,2k)a'}\\\frac{\mathsf H_{2k}-\mathsf H_{4k+1}}{(4k+1)\binom{4k}{2k}}={}&\int_0^1[t^{}(1-t)]^{2k}\log t\D t,\label{eq:(4k,2k)b'}\\\frac{\mathsf H_{2k}-\mathsf H_{4k+1}}{(4k+1)\binom{4k}{2k}}={}&\int_0^1[t^{}(1-t)]^{2k}\log (1-t)\D t.\label{eq:(4k,2k)c'}
\intertext{and the following integral formulae for  $ k\in\mathbb Z_{>0}$:}
\frac{1}{k\binom{3k}k}={}&\int_0^1[t^{2}(1-t)]^{k}\frac{\D t}{1-t},\label{eq:(3k,k)a}\\\frac{\mathsf H_{2k}-\mathsf H_{3k}}{k\binom{3k}k}={}&\int_0^1[t^{2}(1-t)]^{k}\frac{\log t\D t}{1-t},\label{eq:(3k,k)b}\\\frac{\mathsf H_{k-1}-\mathsf H_{3 k}}{k\binom{3k}k}={} &\int_0^1[t^{2}(1-t)]^{k}\frac{\log (1-t)\D t}{1-t},\label{eq:(3k,k)c}\\\frac{1}{k\binom{4k}{2k}}={}&2\int_0^1[t^{}(1-t)]^{2k}\frac{\D t}{1-t},\label{eq:(4k,2k)a}\\
\frac{\mathsf H_{2k}-\mathsf H_{4k}}{k\binom{4k}{2k}}={}&2\int_0^1[t^{}(1-t)]^{2k}\frac{\log t\D t}{1-t},\label{eq:(4k,2k)b}\\
\frac{\mathsf H_{2k-1}-\mathsf H_{4k}}{k\binom{4k}{2k}}={}&2\int_0^1[t^{}(1-t)]^{2k}\frac{\log (1-t)\D t}{1-t},\label{eq:(4k,2k)c}
\end{align}These integral representations for binomial expressions will be the key to our analysis in the present work. Some of these integral formulae  have already been reported in the literature \cite{AKP2003,Au2020,Au2022a,Zhou2023SunCMZV,Sun2023}.

\subsection{Proofs of Theorems \ref{thm:3k_q}--\ref{thm:conj1.2}\label{subsec:Sun3kconj}}
We begin with a non-trivial application of \eqref{eq:(3k,k)a'}.\begin{proof}[Proof of Theorem \ref{thm:3k_q}]To prepare for the evaluation of an integral representation for  \eqref{eq:conj1.1}, we first establish \begin{align}\begin{split}&\frac{x-1 }{x^3}\int_0^1\frac{1 }{t^2 (1-t)}\log \left(1-\frac{ x^3t^2 (1-t)}{x-1}\right)\D t\\
={}&\frac{(1-x) q(x)^2}{x^3}-\frac{\sqrt{(1-x) (3+x)} q(x)}{x^2}-\frac{3 (1-x) \log ^2(1-x)}{4 x^3}-\frac{3 (1-x) \log (1-x)}{2 x^2}\end{split}\label{eq:Sun1.1}
\end{align}for $ x\in(-3,c)\smallsetminus\{0\}$.

Integrating by parts in $t$, we may identify the left-hand side of  \eqref{eq:Sun1.1} with\begin{align}
-\frac{(1-x) }{x^3}\int_0^1\left[ \frac{1}{t-\frac{1}{x}} +\frac{1}{t+\tau^+(x)}+\frac{1}{t+\tau^-(x)}\right]\left(\frac{1}{t}+\log \frac{1-t}{t}\right)\D t,
\end{align}where $ \tau^\pm(x)=\frac{1-x\pm i \sqrt{(1-x) (3+x)}}{2 x}$. Upon completing an elementary integration\begin{align}
\begin{split}
&-\frac{(1-x) }{x^3}\int_0^1\left[ \frac{1}{t-\frac{1}{x}} +\frac{1}{t+\tau^+(x)}+\frac{1}{t+\tau^-(x)}\right]\frac{\D t}{t}\\={}&-\frac{\sqrt{(1-x) (3+x)} q(x)}{x^2}-\frac{3 (1-x) \log (1-x)}{2 x^2},
\end{split}
\end{align} we are left with convergent integrals in the form of  \cite{Panzer2015}\begin{align}
\int_0^1\frac{1}{t+\tau}\log \frac{1-t}{t}\D t=\frac{\log ^2(1+\tau )}{2} +\frac{\log ^2\tau }{2}-\log (\tau ) \log (1+\tau ).\label{eq:3loglogs}
\end{align}After a little algebra with $ \tau\in\left\{-\frac1x,\tau^+(x),\tau^-(x)\right\}$, we arrive at the right-hand side of \eqref{eq:Sun1.1}.

Thanks to \eqref{eq:(3k,k)a'}, the left-hand side of \eqref{eq:conj1.1} is equal to\begin{align}
\begin{split}
\sum_{k=0}^\infty\frac{(3k+1)x^{3k}}{(k+1)(x-1)^{k}}\int_0^1[t(1-t^2)]^{2 k}\D t={}&\frac{x-1 }{x^3}\int_0^1\frac{2 }{t^2 (1-t)}\log \left(1-\frac{ x^3t^2 (1-t)}{x-1}\right)\D t\\{}&-\int_0^1\frac{3 (x-1)\D t}{1-x+t^2 (1-t) x^3}.\label{eq:conj1.1prep}
\end{split}
\end{align}Combining  \eqref{eq:Sun1.1} with the last elementary integral in \eqref{eq:conj1.1prep}, we obtain \eqref{eq:conj1.1} as claimed.\end{proof}\begin{remark}On several occasions in  this work, we use Panzer's \texttt{HyperInt} package \cite{Panzer2015}  to verify certain integral identities where the integrands involve logarithms and rational functions. For example, one may validate \eqref{eq:3loglogs} by typing \begin{quote}\texttt{fibrationBasis(fibrationBasis(hyperInt(log((1 - t)/t)/(t + tau), [t = 0 .. 1]), [tau]) - (log(1 + tau)\^{}2/2 + log(tau)\^{}2/2 - log(tau)*log(1 + tau)), [tau]);}\end{quote}in \texttt{Maple} and checking that the answer is $0$.
\eor\end{remark}

We  continue with an application of \eqref{eq:(3k,k)a}.

\begin{proof}[Proof of Theorem \ref{thm:conj1.2}] Exploiting \eqref{eq:(3k,k)a}, we may identify the left-hand side of \eqref{eq:conj1.2} with\begin{align}
\sum_{k=1}^\infty\f{x^{3k}}{k(x-1)^k\bi{3k}k}\int_0^1[t(1-t^2)]^{2 k}\frac{\D t}{1-t}=-\int_0^1\log \left(1-\frac{ x^3t^2 (1-t)}{x-1}\right)\frac{\D t}{1-t}.
\end{align}Integrating by parts,  one can recast the formula above into (cf.\ \cite{Panzer2015} for a  symbolic evaluation of the integral below) \begin{align}
\begin{split}{}&
-\int_0^1\left[ \frac{1}{t-\frac{1}{x}} +\frac{1}{t+\tau^+(x)}+\frac{1}{t+\tau^-(x)}\right]\log(1-t)\D t\\={}&-\frac{\log^2(1-x)+\log^2\big(1+\frac{1}{\tau^+(x)}\big)+\log^2\big(1+\frac{1}{\tau^-(x)}\big)}{2}\\&{}-\Li_2(x)-\Li_2\left(-\frac{1}{\tau^+(x)}\right)-\Li_2\left(-\frac{1}{\tau^-(x)}\right),
\end{split}
\end{align}where the dilogarithm function  is  analytically continued from  $ \Li_2(z)\colonequals\sum_{k=1}^\infty\frac{z^k}{k^2}$ for $ |z|\leq1$, being compatible with the MPL-GPL\ correspondence  $ \Li_2(z)\colonequals -G\big(0,\frac1z;1\big)$.

Since we have  $ \frac{1}{x}-{\tau^+}(x)-{\tau^-}(x)=1$, Newman's functional equation \cite[(1.44) or A.2.1(20)]{Lewin1981} fits here, in the form of\begin{align}
\begin{split}
&2\left[ \Li_2(x)+\Li_2\left(-\frac{1}{\tau^+(x)}\right)+\Li_2\left(-\frac{1}{\tau^-(x)}\right) \right]\\={}&\Li_2\left( -\frac{x\tau^-(x)}{\tau^+(x)} \right)+\Li_2\left( -\frac{x\tau^+(x)}{\tau^-(x)} \right)+\Li_2\left( -\frac{1}{x\tau^+(x)\tau^-(x)} \right).
\end{split}
\end{align} Owing to the elementary identities \begin{align}
-\frac{x\tau^-(x)}{\tau^+(x)}={}&\frac{1}{1+\tau^+(x)},&-\frac{x\tau^+(x)}{\tau^-(x)}={}&\frac{1}{1+\tau^-(x)},&-\frac{1}{x\tau^+(x)\tau^-(x)}=\frac{1}{1-\frac{1}{x}}{}&
\end{align} and the functional equations of Landen's type (cf.\ \cite[(1.12) or A.2.1(8)]{Lewin1981})\begin{align}
\Li_2(z)+\Li_2\left( \frac{1}{1-\frac{1}{z}} \right)=-\frac{\log^2(1-z)}{2},\quad z\in\left\{ x,-\frac{1}{\tau^+(x)},-\frac{1}{\tau^-(x)} \right\},
\end{align}we may further reduce Newman's relation into\begin{align}
\Li_2(x)+\Li_2\left(-\frac{1}{\tau^+(x)}\right)+\Li_2\left(-\frac{1}{\tau^-(x)}\right) =-\frac{\log^2(1-x)+\log^2\big(1+\frac{1}{\tau^+(x)}\big)+\log^2\big(1+\frac{1}{\tau^-(x)}\big)}{6},\label{eq:Newman_sum}
\end{align}  which gives the right-hand side of \eqref{eq:conj1.2}.
\end{proof}
\subsection{Proofs of Theorems \ref{thm:conj1.3}--\ref{thm:conj1.4}\label{subsec:4k2k}}
In this subsection, we will not need the full-fledged form for $ {4k \choose 2k}^{-1}$ given in \S\ref{subsec:3k4kintrepn}. Instead, for the moment, we only require a light version\begin{align}
\frac{1}{k\binom{2k}k}=\frac{1}{2}\int_0^1[t(1-t)]^{k-1}\D t
\end{align}that has  been covered in \cite[Corollary 3.4]{Zhou2022mkMpl}.
\begin{proof}[Proof of Theorem \ref{thm:conj1.3}]\begin{enumerate}[leftmargin=*,  label=(\alph*),ref=(\alph*),
widest=d, align=left] \item
Since \begin{align}
\sum_{k=0}^\infty\frac{1}{(k+1)X^{2k}\binom{4k}{2k}}=\sum_{k=0}^\infty\frac{1}{(k+2)X^{k}\binom{2k}{k}}+\sum_{k=0}^\infty\frac{1}{(k+2)(-X)^{k}\binom{2k}{k}}
\end{align}holds for  $ X>\frac{1}{4}$, it suffices to check that \begin{align}
\sum_{k=0}^\infty\frac{1}{(k+2)X^{k}\binom{2k}{k}}=\frac{4X(12X-1) \cot ^{-1}\sqrt{4X-1}}{\sqrt{4X-1}}-24X^2 \left( \cot ^{-1}\sqrt{4X-1} \right)^2-6X.\label{eq:2k_k_prep}
\end{align}The left-hand side of the equation above can be evaluated by\begin{align}
\frac{1}{2}+\sum_{k=1}^\infty\frac{k}{k+2}\int_0^1\left[\frac{t(1-t)}{X}\right]^k\frac{\D t}{2t(1-t)}=\frac{1}{2}+X\int_0^1\frac{ \frac{(t-1) t (t^2-t+2X)}{(t-1) t+X}-2X \log \frac{t^2-t+X}{X}}{2 (t-1)^3 t^3}\D t.
\end{align}Setting  $ u \colonequals\frac{1+i\sqrt{4X-1}}{2}$ so that $ \I u>0$ and  $ X=u(1-u)>\frac{1}{4}$, we may use Panzer's algorithm   \cite{Panzer2015} to identify the right-hand side of the last displayed equation with\begin{align}
-6 (1-u)u+\frac{2  (1-u)u [12 (1-u)u-1] \log\frac{u}{u-1}}{ 1-2u}+6 (1-u)^2 u^2 \log ^2\frac{u}{u-1},
\end{align}    which is the same as \eqref{eq:2k_k_prep}.
\item Writing $ X=\frac{i}{2}\sqrt{x(x-1)}$ in \eqref{eq:2k_k_prep},  performing analytic continuation to $\frac{X}{i}>\frac14$, while noting that\footnote{We compute the square roots of non-zero complex numbers by $\sqrt z\colonequals\sqrt{|z|}e^{\frac{i}{2}\arg z} $, where $-\pi<\arg z\leq\pi $.} {\allowdisplaybreaks\begin{align}
\frac{1}{\sqrt{4X-1}}={}&\frac{\sqrt{-x}-\sqrt{x-1}}{1-2x},\\\frac{1}{\sqrt{4X+1}}={}&\frac{\sqrt{1-x}-\sqrt{x}}{1-2 x},\\ \tanh ^{-1}\frac{1}{\sqrt{4X+1}}+i\cot ^{-1}\sqrt{4X-1}={}&\begin{cases}\tanh^{-1}\frac{1}{\sqrt{x}}, &\text{if } x>\frac{1+\sqrt{2}}{2} ,\\
\tanh^{-1}\frac{1}{\sqrt{1-x}}, & \text{if }x<\frac{1-\sqrt{2}}{2}, \\
\end{cases}\\\tanh ^{-1}\frac{1}{\sqrt{4X+1}}-i\cot ^{-1}\sqrt{4X-1}={}&\begin{cases}\tanh^{-1}\frac{1}{\sqrt{1-x}}, & \text{if }x>\frac{1+\sqrt{2}}{2} ,\\
\tanh^{-1}\frac{1}{\sqrt{x}}, & \text{if }x<\frac{1-\sqrt{2}}{2}, \\
\end{cases}
\end{align}}we immediately reach the claimed identity.\qedhere\end{enumerate}\end{proof} \begin{proof}[Proof of Theorem \ref{thm:conj1.4}] For $X>1/4$, one can establish\begin{align}\sum_{k=1}^\infty\f1{k^2X^{2k}\bi{4k}{2k}}=4\left(\cot^{-1}\sqrt{4X-1}\right)^{2}-\log^2\f{\sqrt{4X+1}+1}{\sqrt{4X+1}-1}\end{align}
on the observations that \begin{align}
\f12\sum_{k=1}^\infty\f1{k^2X^{2k}\bi{4k}{2k}}=\sum_{k=1}^\infty\f{1+(-1)^k}{k^2X^k\bi{2k}k}
\end{align}and \begin{align}
\sum_{k=1}^\infty\f {z^{2k}}{k^2\bi{2k}{k}}=2\left(\sin^{-1}\frac{z}{2}\right)^{2}=\begin{cases}2\left(\cot^{-1}\sqrt{4X-1}\right)^2 &\text{if } z=1/\sqrt{X}\in(0,2),\ \\
-\frac{1}{2} \log^2\f{\sqrt{4X+1}+1}{\sqrt{4X+1}-1}& \text{if } z/i=1/\sqrt{X}\in(0,2). \\
\end{cases}
\end{align} Again, the substitution $ X=\frac{i}{2}\sqrt{x(x-1)}$ will bring us a new summation formula, as declared in \eqref{eq:4k2ksymm}.\end{proof}

\section{Polylogarithmic reductions of certain inverse binomial sums\label{sec:polylog_red}}
\subsection{Polylogarithmic characterizations of certain infinite series\label{subsec:GPLchar}}As extensions to  the proof of \cite[Theorem 1.3(a)]{Zhou2023SunCMZV}, the arguments in the current subsection draw on repeated invocations of the recursive structure for GPLs, as given in \eqref{eq:GPL_rec}.\begin{proof}[Proof of Theorem \ref{thm:3kGPL}]\begin{enumerate}[leftmargin=*,  label=(\alph*),ref=(\alph*),
widest=d, align=left] \item
For  $r\in\mathbb Z_{>0} $,  we first show inductively that\begin{align}
\Li_r\left(\frac{x^{3}t^{2}(1-t)}{x-1} \right)\in \mathfrak{G}^{(t)}_{r;r}\left[\varnothing;\left\{ \frac{1}{x},-\tau^+ (x),-\tau^-(x)\right\} \right].\label{eq:Li_fib_3k}
\end{align} When $r=1$, the statement is evident from the relation\begin{align}
\Li_1\left(\frac{x^{3}t^{2}(1-t)}{x-1} \right)=-G\left( \frac{1}{x} ;t\right)-G(-\tau^+(x);t)-G(-\tau^-
(x);t).\label{eq:Li1_GGG}\end{align}Assuming that \eqref{eq:Li_fib_3k} is true for $ r\in\mathbb Z_{>0}$, we may  verify the same claim for $r+1$ by applying the   GPL recursion \eqref{eq:GPL_rec}  to the  following relation:\begin{align}
\Li_{r+1}\left(\frac{x^{3}t^{2}(1-t)}{x-1} \right)=\int_{0}^1\left( \frac{2}{s} +\frac{1}{s-1}\right)\Li_{r}\left(\frac{x^{3}s^{2}(1-s)}{x-1} \right)\D s.\label{eq:Li_rec}
\end{align}So far, we see that   \eqref{eq:GPL_rec}, \eqref{eq:Li_fib_3k}, and\begin{align}
\sum_{k=1}^\infty\frac{1}{k^{r+2}\binom{3k}k}\left( \frac{x^{3}}{x-1} \right)^{k}=\int_0^1\Li_{r+1}\left(\frac{x^{3}t^{2}(1-t)}{x-1} \right)\frac{\D t}{1-t},\quad r\in\mathbb Z_{>0}\label{eq:3k_zeta}
\end{align}  lead us  to  \eqref{eq:3k_inv_sum1} for $a_k=\frac1k ,k\in\mathbb Z_{>0}$. In view of \eqref{eq:Li1_GGG} and \eqref{eq:3k_zeta},  we may extend the validity of   \eqref{eq:3k_inv_sum1} back to $r=0$ for $x\in(-3,c)$ and  $a_k=\frac1k ,k\in\mathbb Z_{>0}$.

To move onto the cases where $ (a_k)_{k\in\mathbb Z_{>0}}\in\big\{(\mathsf H_{k}-\mathsf H_{3k})_{k\in\mathbb Z_{>0}},$ $(\mathsf H_{2k}-\mathsf H_{3k})_{k\in\mathbb Z_{>0}}\big\}$, we recall from \eqref{eq:GPL_rec}  that $ G(1;t)=\log(1-t)$ and $G(0;t)=\log t$, before employing  a shuffle product    \cite[(2.4)]{Frellesvig2016}\begin{align}
\begin{split}
G(\alpha;t)G(\beta_1,\dots,\beta_r;t)={}&G(\alpha,\beta_1,\dots,\beta_r;t)+\sum_{j=1}^{r-1}G(\beta_{1},\dots,\beta_j,\alpha,\beta_{j+1},\dots ,\beta_r;t)\\{}&+G(\beta_1,\dots,\beta_r,\alpha;t)\label{eq:ShGPL}
\end{split}
\end{align}to deduce\begin{align}
\Li_{r}\left(\frac{x^{3}t^{2}(1-t)}{x-1} \right)\log t\in {}&\mathfrak{G}^{(t)}_{r+1;r}\left[\varnothing;\left\{ \frac{1}{x},-\tau^+ (x),-\tau^-(x)\right\} \right],\\\Li_{r}\left(\frac{x^{3}t^{2}(1-t)}{x-1} \right)\log (1-t)\in {}&\mathfrak{G}^{(t)}_{r+1;r}\left[\varnothing;\left\{ \frac{1}{x},-\tau^+ (x),-\tau^-(x)\right\} \right].
\end{align}Subsequent invocations of the  GPL recursion \eqref{eq:GPL_rec} bring us   \eqref{eq:3k_inv_sum1} for $a_k=\mathsf H_{k}-\mathsf H_{3k} =\mathsf H_{k-1}-\mathsf H_{3k} +\frac{1}{k},k\in\mathbb Z_{>0}$ and  $a_k=\mathsf H_{2k}-\mathsf H_{3k} ,k\in\mathbb Z_{>0}$. Here, the requirement  $r\in\mathbb Z_{>0}$ cannot be relaxed, as the situation $r=0$ corresponds to different structures [see \eqref{eq:H2k-H3k_spec} and \eqref{eq:Hk-H3k_spec}  below].

From the proof of \cite[Theorem 3.1]{Zhou2022mkMpl}, we know that \begin{align}
\sum_{k=1}^\infty\frac{\prod_{j=1}^M\mathsf H_{k}^{(r_{j})}}{k^{r}}z^k\in\mathfrak G_{w;w}^{(z)}[\varnothing;\{1\}]\label{eq:H_prod_gf}
\end{align}for $ |z|<1$ and $ r\in\mathbb Z_{>0}$, where $w=r+\sum_{j=1}^Mr_j$. Now, in parallel to \eqref{eq:Li_rec}, we have{\allowdisplaybreaks\begin{align}
\begin{split}&
G\left(0,\alpha_{1},\dots,\alpha_r;\frac{x^{3}t^{2}(1-t)}{x-1} \right)\\={}&\int_{0}^1\left( \frac{2}{s} +\frac{1}{s-1}\right)G\left(\alpha_{1},\dots,\alpha_r;\frac{x^{3}s^{2}(1-s)}{x-1} \right)\D s,
\end{split}\\\begin{split}&
G\left(1,\alpha_{1},\dots,\alpha_r;\frac{x^{3}t^{2}(1-t)}{x-1} \right)\\={}&\int_{0}^1\left[ \frac{1}{s-\frac{1}{x}} +\frac{1}{s+\tau ^+(x)}+\frac{1}{s+\tau ^-(x)}\right]G\left(\alpha_{1},\dots,\alpha_r;\frac{x^{3}s^{2}(1-s)}{x-1} \right)\D s,
\end{split}
\end{align}}by virtue of the  GPL recursion \eqref{eq:GPL_rec} (up to a variable substitution), so \eqref{eq:H_prod_gf} entails\begin{align}
\sum_{k=1}^\infty\frac{\prod_{j=1}^M\mathsf H_{k}^{(r_{j})}}{k^{r}}\left[\frac{x^{3}t^{2}(1-t)}{x-1}\right]^k\in\mathfrak G_{w;w}^{(t)}\left[\left\{ \frac{1}{x},-\tau^+ (x),-\tau^-(x)\right\};\left\{ \frac{1}{x},-\tau^+ (x),-\tau^-(x)\right\} \right],\label{eq:Hprod_gf3k_prep}
\end{align}hence \eqref{eq:H_prod_sum}.
\item We  note that the closed forms for \begin{align}
\sum_{k=0}^\infty\frac{1}{(3k+1)\binom{3k}k}\left( \frac{x^{3}}{x-1} \right)^{k},\quad \sum_{k=0}^\infty\frac{1}{(3k+2)\binom{3k}k}\left( \frac{x^{3}}{x-1} \right)^{k},\quad \sum_{k=1}^\infty\frac{1}{(2k-1)\binom{3k}k}\left( \frac{x^{3}}{x-1} \right)^{k}\label{eq:Sun3k_sums}
\end{align}were the subject of  \cite[Theorem 1.1]{Sun2023}. What we will do soon is  a  modest generalization of  \cite[Theorems 1.1 and  1.3]{Sun2023}.

Thanks to \eqref{eq:(3k,k)b'} and \eqref{eq:(3k,k)c'}, we may represent the left-hand side of \eqref{eq:Hk-H2k(3k+1)} as an integral\begin{align}
\int_0^1\frac{\log\frac{1-t}{t}\D t}{1-\frac{ x^3t^2 (1-t)}{x-1}}.
\end{align}Before simplifying this integral with the help from \eqref{eq:3loglogs}, we  need a partial fraction expansion\begin{align}
\begin{split}
\frac{1}{ 1-\frac{ x^3t^2 (1-t)}{x-1}}={}&-\frac{1-x}{x (3-2 x)}\frac{1}{t-\frac{1}{x}}+\left[\frac{1-x}{2x (3-2 x)}+i\sqrt{\frac{1-x}{3+x}}\frac{3-x}{2 x (3-2 x)}\right]\frac{1}{t+\tau^+(x)}\\{}&+\left[\frac{1-x}{2x (3-2 x)}-i\sqrt{\frac{1-x}{3+x}}\frac{3-x}{2 x (3-2 x)}\right]\frac{1}{t+\tau^-(x)}.
\end{split}
\end{align}It is then an elementary exercise to check that\begin{align}
\int_0^1\left[ -\frac{2}{t-\frac{1}{x}}+ \frac{1}{t+\tau^+(x)}+\frac{1}{t+\tau^-(x)}\right]\log\frac{1-t}{t}\D t={}&-q(x)^2-\frac{3 \log ^2(1-x)}{4}
\intertext{and}
\int_0^1\left[  \frac{1}{t+\tau^+(x)}-\frac{1}{t+\tau^-(x)}\right]\log\frac{1-t}{t}\D t={}&i q(x) \log (1-x),
\end{align}so  the right-hand side of \eqref{eq:Hk-H2k(3k+1)} immediately follows.

In a similar vein as the last paragraph, one can identify the left-hand side of \eqref{eq:Hk-H3k(3k+1)} with  \begin{align}
\int_0^1\frac{\log(1-t)\D t}{1-\frac{ x^3t^2 (1-t)}{x-1}},
\end{align} and subsequently compute [cf.\ \eqref{eq:Newman_sum}]\begin{align}
\begin{split}
&\int_0^1\left[ -\frac{2}{t-\frac{1}{x}}+ \frac{1}{t+\tau^+(x)}+\frac{1}{t+\tau^-(x)}\right]\log(1-t)\D t\\={}&-3 \Li_2(x)-\frac{2q(x)^2}{3}-\log ^2(1-x),
\end{split} \\
\begin{split}
&\int_0^1\left[  \frac{1}{t+\tau^+(x)}-\frac{1}{t+\tau^-(x)}\right]\log(1-t)\D t\\={}&\Li_2\left( -\frac{1}{\tau^+(x)} \right)-\Li_2\left( -\frac{1}{\tau^-(x)} \right)+i q(x) \log (1-x),
\end{split}
\end{align}hence the right-hand side of \eqref{eq:Hk-H3k(3k+1)}.

We have the following integral formulae by variations on \eqref{eq:(3k,k)a}--\eqref{eq:(3k,k)c}:\begin{align}
\left(\frac{1}{3k+1}+\frac{1}{3k+2}\right)\frac{1}{\binom{3k}k}={}&\frac{9}{2}\int_0^1[t^2(1-t)]^{k+1}\frac{\D t}{1-t},\label{eq:3k+2_int}\\\left(\frac{1}{3k+1}+\frac{1}{3k+2}\right)\frac{\mathsf H_{2 k+2}-\mathsf H_{3 k+3}}{\binom{3k}k}={}&\frac{9}{2}\int_0^1[t^2(1-t)]^{k+1}\frac{\log t\D t}{1-t},\label{eq:3k+2_int'}\\\left(\frac{1}{3k+1}+\frac{1}{3k+2}\right)\frac{\mathsf H_{k}-\mathsf H_{3 k+3}}{\binom{3k}k}={}&\frac{9}{2}\int_0^1[t^2(1-t)]^{k+1}\frac{\log (1-t)\D t}{1-t},\label{eq:3k+2_int''}
\end{align}which enable us to compute\begin{align}
\begin{split}&
\sum_{k=0}^\infty\left(\frac{1}{3k+1}+\frac{1}{3k+2}\right)\frac{\mathsf H_{k}-\mathsf H_{2k+2}}{\binom{3k}k}\left( \frac{x^{3}}{x-1} \right)^{k}=\frac{9}{2}\int_0^1\frac{t^2(1-t)}{1-\frac{x^{3}}{x-1}t^2(1-t)}\frac{\log \frac{1-t}{t}\D t}{1-t}\\={}&-\frac{9(1-x) }{4x^3 (3-2 x)} \log ^2(1-x)+\frac{9 (1-x)^2}{8 x^3 (3-2 x)} \left[4q(x)^2-\log ^2(1-x)\right]\\{}&+\frac{ 9(1-x)}{2x^2 (3-2 x)} \sqrt{\frac{1-x}{3+x}} q(x) \log (1-x)
\end{split}
\end{align}and{\allowdisplaybreaks\begin{align}
\begin{split}&
\sum_{k=0}^\infty\left(\frac{1}{3k+1}+\frac{1}{3k+2}\right)\frac{\mathsf H_{2 k+2}-\mathsf H_{2k}}{\binom{3k}k}\left( \frac{x^{3}}{x-1} \right)^{k}\\={}&\sum_{k=0}^\infty\left[ \frac{15}{4 (3 k+1)}-\frac{3}{2 (3 k+2)}-\frac{3}{4 (k+1)} \right]\frac{1}{\binom{3k}{k}}\left( \frac{x^{3}}{x-1} \right)^{k}\\={}&\frac34\int_{0}^1\frac{(7-9t^{2})\D t}{1-\frac{t^2 (1-t) x^3}{x-1}}-\frac{3}{4}\sum_{k=0}^\infty\frac{1}{(k+1)\binom{3k}{k}}\left( \frac{x^{3}}{x-1} \right)^{k}\\={}&-\frac{3\left(7 x^2-3 x-18\right)}{4x^2 (3-2 x)}\sqrt{\frac{1-x}{3+x}}q(x)+\frac{9 (1-x) (6-7 x) }{8 x^2 (3-2 x)}\log (1-x)\\{}&-\frac{3}{4}\sum_{k=0}^\infty\frac{1}{(k+1)\binom{3k}{k}}\left( \frac{x^{3}}{x-1} \right)^{k}.\label{eq:3k+2patch}
\end{split}
\end{align}}Here, the last infinite sum has been treated in Theorem \ref{thm:3k_q}.
 These
efforts are then combined into \eqref{eq:Hk-H2k(3k+2)}.

  One can  reformulate the last sum in \eqref{eq:Sun3k_sums} as\begin{align}
\sum_{k=1}^\infty\frac{1}{(2k-1)\binom{3k}k}\left( \frac{x^{3}}{x-1} \right)^{k}=\frac{2}{3}\int_0^1\frac{\frac{x^{3}(1-t)}{x-1}\D t}{1-\frac{x^{3}t^2(1-t)}{x-1}}.
\end{align}By extension of the procedures in \S\ref{subsec:3k4kintrepn}, we may evaluate \eqref{eq:Hk-H2k(2k-1)} and \eqref{eq:Hk-H3k(2k-1)}  via the following integral representations:\begin{align}\frac{\mathsf H_{2k-2}-\mathsf H_{3 k-1}}{(2k-1)\binom{3k}k}={}&\frac{2}{3}\int_0^1[t^2(1-t)]^k\frac{\log t}{t^2}\D t,\\
\frac{\mathsf H_k-\mathsf H_{3 k-1}}{(2k-1)\binom{3k}k}={}&\frac{2}{3}\int_0^1[t^2(1-t)]^k\frac{\log (1-t)}{t^2}\D t.
\end{align} We leave the details to diligent readers, since they are essentially similar to the foregoing proof of  \eqref{eq:Hk-H2k(3k+1)} and \eqref{eq:Hk-H3k(3k+1)}.
 \qedhere\end{enumerate}
\end{proof}\begin{remark}We may supplement part (a) of Theorem \ref{thm:3kGPL}  by taking derivatives in the following manner:\begin{align}
\sum_{k=1}^\infty\frac{a_{k}}{k^{2-n}\binom{3k}k}\left( \frac{x^{3}}{x-1} \right)^{k}=\left[\frac{x (1-x)}{3-2 x} \frac{\D}{\D x}\right]^n\sum_{k=1}^\infty\frac{a_{k}}{k^{2}\binom{3k}k}\left( \frac{x^{3}}{x-1} \right)^{k},
\end{align}where $x\in(-3,c) $ and $ n\in\mathbb Z_{>0}$. In particular, one has {\allowdisplaybreaks\begin{align}\begin{split}
\sum_{k=1}^\infty\frac{1}{k^{}\binom{3k}k}\left( \frac{x^{3}}{x-1} \right)^{k}={}&  \frac{2x}{3-2 x}\sqrt{\frac{1-x}{3+x}}q(x)+\frac{x \log (1-x)}{3-2 x},\label{eq:2022ak+b}
\end{split}\\\begin{split}
\sum_{k=1}^\infty\frac{\mathsf H_{2k}-\mathsf H_{3k}}{k^{}\binom{3k}k}\left( \frac{x^{3}}{x-1} \right)^{k}={}&\frac{x \Li_2(x)}{3-2 x}+\frac{i x   }{3-2 x}\sqrt{\frac{1-x}{3+x}}\left[\Li_2\left(-\frac{1}{\tau ^+(x)}\right)-\Li_2\left(-\frac{1}{\tau ^-(x)}\right)\right]\\{}&+\frac{1-x}{3-2x}\left[\frac{q(x)^2}{3}-\frac{\log ^2(1-x)}{4}  \right],\label{eq:H2k-H3k_spec}
\end{split}\\\begin{split}
\sum_{k=1}^\infty\frac{\mathsf H_{k-1}-\mathsf H_{2k}}{k^{}\binom{3k}k}\left( \frac{x^{3}}{x-1} \right)^{k}={}&-\frac{(1-x)q(x)^2}{3-2 x}-\frac{x }{3-2 x} \sqrt{\frac{1-x}{3+x}}q(x) \log (1-x)\\{}&+\frac{(3-x) \log ^2(1-x)}{4 (3-2 x)}.\label{eq:Hk-H3k_spec}
\end{split}
\end{align}}Here, series like   \eqref{eq:2022ak+b} were first investigated in \cite{Sun2022ab}.        \eor\end{remark}\begin{remark}From \cite[(25)]{Mezo2014}, one can read off a particular case of \eqref{eq:H_prod_gf}: $ \sum_{k=1}^\infty\frac{\mathsf H_k}{k}z^k=\Li_2(z)+\frac{\log^2(1-z)}{2}$ for $ |z|<1$. This formula will become useful for certain tabulated entries in  \S\ref{subsec:CMZVchar}.
\eor\end{remark}

As seen from \S\ref{subsec:4k2k} and \cite[Remark 1.6]{Zhou2023SunCMZV}, one may fall back on \cite[Corollary 3.4]{Zhou2022mkMpl} (see also \cite{KalmykovVeretin2000,DavydychevKalmykov2001,DavydychevKalmykov2004,Weinzierl2004bn,Kalmykov2007,ABRS2014,Ablinger2017}) for infinite series involving $ \binom{4k}{2k}^{-1}$, $ \mathsf H_{2k}$, and $ \mathsf H_{4k}$. Instead of dwelling on such straightforward consequences of infinite series involving $ \binom{2k}{k}^{-1}$, $ \mathsf H_{k}$, and $ \mathsf H_{2k}$, we will focus on scenarios where the integral representations in \S\ref{subsec:3k4kintrepn} become indispensable, namely the cases treated in Theorem \ref{thm:4kGPL}.
\begin{proof}[Proof of Theorem \ref{thm:4kGPL}]\begin{enumerate}[leftmargin=*,  label=(\alph*),ref=(\alph*),
widest=d, align=left] \item
With the help from \eqref{eq:(4k,2k)a}, we may rewrite the left-hand side of \eqref{eq:H_prod_sum4k} as\begin{align}
\int_0^1\sum_{k=1}^\infty\frac{\prod_{j=1}^M\mathsf H_{k}^{(r_{j})}}{k^{r}}\left[ \frac{2^{2}t^2(1-t)^2}{x(1-x)} \right]^{k}\frac{\D t}{1-t}.
\end{align}    A routine variation on the proof of \eqref{eq:Hprod_gf3k_prep} brings us \begin{align}
\begin{split}
&\sum_{k=1}^\infty\frac{\prod_{j=1}^M\mathsf H_{k}^{(r_{j})}}{k^{r}}\left[ \frac{2^{2}t^2(1-t)^2}{x(1-x)} \right]^{k}\\\in{}&{\mathfrak{G}}^{(1)}_{w;w}\left[\left\{ \xi_{0,0}(x),\xi_{0,1}(x),\xi_{1,0}(x),\xi_{1,1}(x)\right\};\left\{ \xi_{0,0}(x),\xi_{0,1}(x),\xi_{1,0}(x),\xi_{1,1}(x)\right\} \right],
\end{split}
\end{align}where  $w=r+\sum_{j=1}^Mr_j$, so the right-hand side of  \eqref{eq:H_prod_sum4k} follows from an application of the GPL recursion \eqref{eq:GPL_rec}.
\item In view of \eqref{eq:(4k,2k)a'}, we may evaluate the left-hand side of \eqref{eq:Hk(4k+1)} via\begin{align}
\int_0^1\frac{\Li_1\left( \frac{[2t (1-t)]^2}{x (1-x)} \right)\D t}{1-\frac{[2t (1-t)]^2}{x (1-x)}},
\end{align}where\begin{align}
\Li_1\left( \frac{[2t (1-t)]^2}{x (1-x)} \right)=-\sum_{\ell,m\in\{0,1\}}\log\left(1- \frac{t}{\xi_{\ell,m}(x)} \right)=-\sum_{\ell,m\in\{0,1\}} G(\xi_{\ell,m}(x);t)
\end{align}and\begin{align}
\frac{1}{1-\frac{[2t (1-t)]^2}{x (1-x)}}=\frac{\sqrt{(1-x) x}}{4 (2 x-1)}\sum_{\ell,m\in\{0,1\}}\frac{(-1)^{\ell}\sqrt{1-x}-(-1)^m\sqrt{x}}{t-\xi_{\ell,m}(x)}.
\end{align}Thus, the standard GPL recursion brings us the right-hand side of \eqref{eq:Hk(4k+1)}.

Analogously, the integral\begin{align}
\int_0^1\frac{\Li_1\left( \frac{[2t (1-t)]^2}{x (1-x)} \right)}{1-\frac{[2t (1-t)]^2}{x (1-x)}}[16 t^2 (1-t)-1]\D t
\end{align}leads to a GPL representation of \eqref{eq:Hk(4k+3)}.\qedhere\end{enumerate}
\end{proof} \begin{table}[b]\caption{Selected CMZV characterizations of  $\mathsf S_{3,r}\left( a_k;z\right)$ at level $5$, where $ \phi\colonequals\frac{1+\sqrt{5}}{2}$, $ \varsigma\colonequals e^{2\pi i/5}$, and $ \pd\colonequals\log \phi$\label{tab:Zk(5)3k}}

\begin{scriptsize}\begin{align*}\begin{array}{@{}c@{}l|@{}c@{}l} \hline\hline
\vphantom{\frac{\int}{1}}&\mathsf S_{3,0}\left(\frac1k;\mathfrak r_{1/5}=\phi\right)=\int_0^1\Li_1\left( \phi t ^2(1-t)\right)\frac{\D t}{1-t}&&\mathsf S_{3,0}\left(\frac1k;\mathfrak r_{2/5}=-\frac{1}{\phi}\right)=\int_0^1\Li_1\left( -\frac{t ^2(1-t)}{\phi}\right)\frac{\D t}{1-t}
\\
{}={}&\frac{8 \pi ^2}{75}-2 \pd ^2&{}={}&\frac{2 \pi ^2}{75}-2 \pd ^2 \\[5pt]&\mathsf S_{3,1}\left(\frac1k;\phi\right)=\int_0^1\Li_2\left( \phi t ^2(1-t)\right)\frac{\D t}{1-t}&&\mathsf S_{3,1}\left(\frac1k;-\frac{1}{\phi}\right)=\int_0^1\Li_2\left( -\frac{t ^2(1-t)}{\phi}\right)\frac{\D t}{1-t}\\{}={}&-\frac{5 \RE\Li_3(\varsigma )}{2}-\frac{84 \zeta (3)}{25}+\frac{4\pi  \I\left[2\Li_2\big(\varsigma ^2\big)+\Li_2(\varsigma )\right]}{5}     &{}={}&\frac{5 \RE\Li_3(\varsigma )}{2}-\frac{54 \zeta (3)}{25}-\frac{2\pi\I\left[\Li_2\big(\varsigma ^2\big)-2\Li_2(\varsigma )\right]  }{5}   \\&{} -\frac{2 \pd ^3}{3}+\frac{8 \pi ^2 \pd }{75}&&{}+\frac{2 \pd ^3}{3}-\frac{2 \pi ^2 \pd }{75}\\[3pt]\hline \vphantom{\frac{\int}{1}}&\mathsf S_{3,1}\left(\mathsf H_{2k}-\mathsf H_{3k};\phi\right)=\int_0^1\Li_1\left( \phi t ^2(1-t)\right)\frac{\log t\D t}{1-t}&\vphantom{\frac{\int}{1}}&\mathsf S_{3,1}\left(\mathsf H_{2k}-\mathsf H_{3k};-\frac{1}{\phi}\right)=\int_0^1\Li_1\left( -\frac{t ^2(1-t)}{\phi}\right)\frac{\log t\D t}{1-t}\\{}={}&-\frac{25 \RE\Li_3(\varsigma )}{12}-\frac{14 \zeta (3)}{5}+\frac{2\pi  \I\left[2\Li_2\big(\varsigma ^2\big)+\Li_2(\varsigma )\right]}{5}    &{}={}&\frac{25 \RE\Li_3(\varsigma )}{12}-\frac{9 \zeta (3)}{5}-\frac{\pi\I\left[\Li_2\big(\varsigma ^2\big)-2\Li_2(\varsigma )\right]  }{5}   \\&{} +\frac{\pd ^3}{3}+\frac{19 \pi ^2 \pd }{75}&&{}-\frac{\pd ^3}{3}+\frac{14 \pi ^2 \pd }{75}\\[3pt]\hline \vphantom{\frac{\int}{1}}&\mathsf S_{3,1}\left(\mathsf H_{k-1}-\mathsf H_{3k};\phi\right)=\int_0^1\Li_1\left( \phi t ^2(1-t)\right)\frac{\log(1-t)\D t}{1-t}&\vphantom{\frac{\int}{1}}&\mathsf S_{3,1}\left(\mathsf H_{k-1}-\mathsf H_{3k};-\frac{1}{\phi}\right)=\int_0^1\Li_1\left( -\frac{t ^2(1-t)}{\phi}\right)\frac{\log(1- t)\D t}{1-t}\\{}={}&-\frac{5 \RE\Li_3(\varsigma )}{6}-\frac{28 \zeta (3)}{25}+\frac{7 \pi ^2 \pd }{75} &{}={}&\frac{5 \RE\Li_3(\varsigma )}{6}-\frac{18 \zeta (3)}{25}+\frac{17 \pi ^2 \pd }{75}\\[3pt]\hline\hline
\end{array}\end{align*}\end{scriptsize}\end{table}
\begin{table}[t]\caption{Selected CMZV characterizations of $ \mathsf S_{3,r}\left(a_k;z\right)$ at level $6$, where $ \omega\colonequals e^{2\pi i/3}$, $ \varrho\colonequals e^{\pi i/3}$, $ \lambda\colonequals\log2$,  and $ \varLambda\colonequals\log3$\label{tab:Zk(6)3k}}

\begin{scriptsize}\begin{align*}\begin{array}{@{}c@{}l|@{}c@{}l} \hline\hline
\vphantom{\frac{\int}{1}}&\mathsf S_{3,0}\left(\frac1k;\mathfrak r_{1/6}=\frac{2^3}{3}\right)=\int_0^1\Li_1\left( \frac{2^{3}}{3}t ^2(1-t)\right)\frac{\D t}{1-t}&&\mathsf S_{3,0}\left(\frac1k;\mathfrak r_{0}=\frac{3^3}{2^2}\right)=\int_0^1\Li_1\left( \frac{3^3}{2^2}t ^2(1-t)\right)\frac{\D t}{1-t}\\{}={}&\frac{\pi^2}{6}-\frac{\varLambda^2}{2}&{}={}&\frac{2 \pi ^2}{3}-2 \lambda ^2\\[5pt]&\mathsf S_{3,1}\left(\frac1k;\frac{2^3}{3}\right)=\int_0^1\Li_2\left( \frac{2^{3}}{3}t ^2(1-t)\right)\frac{\D t}{1-t}&&\mathsf S_{3,1}\left(\frac1k;\frac{3^3}{2^2}\right)=\int_0^1\Li_2\left( \frac{3^3}{2^2}t ^2(1-t)\right)\frac{\D t}{1-t}\\{}={}&12 \RE\Li_{1,1,1}(\omega ,1,\varrho )+\frac{\zeta (3)}{6}+6 \varLambda  \RE\Li_{1,1}(\omega ,\varrho )   &{}={}&24 \RE\Li_{1,1,1}(-1,1,\varrho )-\frac{22 \zeta (3)}{3} -24 \lambda  \RE\Li_{1,1}(\omega ,\varrho )\\&{} -\frac{\pi  \I\Li_2(\omega )}{2}-\frac{\varLambda ^2 (9 \lambda -\varLambda )}{6} +\frac{\pi ^2 (3 \lambda -\varLambda )}{6} &&{} +\frac{2\lambda ^2 (2 \lambda +9 \varLambda )}{3} -\frac{2\pi ^2 (2 \lambda -3 \varLambda )}{3}  \\[3pt]
\hline \vphantom{\frac{\int}{1}}&\mathsf S_{3,1}\left(\mathsf H_{2k}-\mathsf H_{3k};\frac{2^3}{3}\right)=\int_0^1\Li_1\left( \frac{2^{3}}{3}t ^2(1-t)\right)\frac{\log t\D t}{1-t}&&\mathsf S_{3,1}\left(\mathsf H_{2k}-\mathsf H_{3k};\frac{3^3}{2^2}\right)=\int_0^1\Li_1\left( \frac{3^3}{2^2}t ^2(1-t)\right)\frac{\log t\D t}{1-t}\\{}={}&10 \RE\Li_{1,1,1}(\omega ,1,\varrho ) +\frac{5 \zeta (3)}{36} +3 \varLambda  \RE\Li_{1,1}(\omega ,\varrho )   &{}={}&20 \RE\Li_{1,1,1}(-1,1,\varrho )-\frac{55 \zeta (3)}{9}-12 \lambda  \RE\Li_{1,1}(\omega ,\varrho )\\&{}   -\frac{5\pi  \I\Li_2(\omega )}{4}-\frac{\lambda  \varLambda ^2}{4}+\frac{\pi ^2 (3 \lambda +4 \varLambda )}{36} &&{} +5 \lambda ^2 \varLambda +\frac{\pi ^2 (2 \lambda +3 \varLambda )}{9}  \\[3pt]\hline \vphantom{\frac{\int}{1}}&\mathsf S_{3,1}\left(\mathsf H_{k-1}-\mathsf H_{3k};\frac{2^3}{3}\right)=\int_0^1\Li_1\left( \frac{2^{3}}{3}t ^2(1-t)\right)\frac{\log (1-t)\D t}{1-t}&&\mathsf S_{3,1}\left(\mathsf H_{k-1}-\mathsf H_{3k};\frac{3^3}{2^2}\right)=\int_0^1\Li_1\left( \frac{3^3}{2^2}t ^2(1-t)\right)\frac{\log(1- t)\D t}{1-t}\\{}={}&4 \RE\Li_{1,1,1}(\omega ,1,\varrho )+\frac{\zeta (3)}{18}-\pi  \I\Li_2(\omega )&{}={}&8 \RE\Li_{1,1,1}(-1,1,\varrho )-\frac{22 \zeta (3)}{9} \\&{}+ \frac{\varLambda ^2 (3 \lambda -\varLambda )}{6} -\frac{\pi ^2 (3 \lambda -2 \varLambda )}{18}  &&{}-\frac{2 \lambda ^2 (2 \lambda -3 \varLambda )}{3} +\frac{2\pi ^2 (\lambda -3 \varLambda )}{9} \\[3pt]\hline &\text{CMZVs  unavailable from Au's test for }\mathsf S_{3,1}\left(\mathsf H_{k};\frac{2^3}{3}\right)&&\vphantom{\frac{\frac\int\int}{\int}}\mathsf S_{3,1}\left(\mathsf H_{k};\frac{3^3}{2^2}\right)=\int_0^1\left[\Li_2\left( \frac{3^3}{2^2}t ^2(1-t)\right)+\frac{\log^2\left( 1-\frac{3^3}{2^2}t ^2(1-t) \right)}{2}\right]\frac{\D t}{1-t}\\&&{}={}&72 \RE\Li_{1,1,1}(-1,1,\varrho )+4 \zeta (3)-48 \lambda  \RE\Li_{1,1}(\omega ,\varrho )\\&&&{} +18 \lambda ^2 \varLambda -2 \pi ^2 (\lambda -\varLambda )\\[1pt]\hline\hline
\end{array}\end{align*}\end{scriptsize}\end{table}\subsection{CMZV representations of certain infinite series\label{subsec:CMZVchar}}
In principle, for certain sets $S,S_*\subset\mathbb C $, we can embed $\mathfrak G_{k,k_*}^{(1)}[S,S_*] $ into the CMZV space $ \mathfrak Z_k(N)$ [defined in \eqref{eq:Zk(N)_defn}] for  a suitable $N\in\mathbb Z_{>0}$, by fibrations of GPLs \cite[Lemma 2.14
and Corollary 3.2]{Panzer2015}, as done in \cite[\S\S2--3]{Zhou2022mkMpl}. In practice, we can automate the fibration procedures using Au's \texttt{MultipleZetaValues} (v1.2.0) package \cite{Au2022a}.

Concretely speaking, if the output of Au's function \texttt{IterIntDoableQ[\textbraceleft0,1\textbraceright$\cup S\cup $\textbraceleft$ s$\textbraceright]} is a positive integer that  divides $N$ for every point $ s\in S_*$, then we have $ \mathfrak G_{k,k_*}^{(1)}[S,S_*]\subseteq\mathfrak Z_k(N) $.  We will refer to this as ``Au's test'' hereafter.

\begin{table}[t]\centering\caption{Selected CMZV characterizations of  $\mathsf  S_{3,r}\left( a_k;z\right)$ at level $7$, where   $ \eta\colonequals e^{2\pi i/7}$, $ \lambdabar_\nu\colonequals\log(2\sin(\nu\pi))=-\RE\Li_1\left(e^{2\nu\pi i}\right)$, and  $\pL \colonequals\log7=2(\lambdabar_{1/7}+2\lambdabar_{2/7}+\lambdabar_{3/14})= 2(2\lambdabar_{1/7}+\lambdabar_{2/7}-\lambdabar_{1/14})=2(\lambdabar_{1/7}+\lambdabar_{2/7}+\lambdabar_{3/7})\in\mathfrak Z_1(7) $ \label{tab:Zk(7)3k}}

\begin{tiny}\begin{align*}\begin{array}{@{}c@{}l|@{}c@{}l|@{}c@{}l} \hline\hline
&\vphantom{\frac{\int}{1}}\mathsf S_{3,0}\left(\frac1k;\mathfrak r_{1/7}\right)=\int_0^1\Li_1\left( \mathfrak r_{1/7}t ^2(1-t)\right)\frac{\D t}{1-t}&&\vphantom{\frac{\int}{1}}\mathsf S_{3,0}\left(\frac1k;\mathfrak r_{2/7}\right)=\int_0^1\Li_1\left( \mathfrak r_{2/7}t ^2(1-t)\right)\frac{\D t}{1-t}&&\mathsf S_{3,0}\left(\frac1k;\mathfrak r_{3/7}\right)=\int_0^1\Li_1\left( \mathfrak r_{3/7}t ^2(1-t)\right)\frac{\D t}{1-t}\\{}={}&\frac{32 \pi ^2}{147}-2 \big(\lambdabar_{1/7}-\lambdabar_{2/7}\big)^2&{}={}&\frac{2 \pi ^2}{147}-2\lambdabar_{3/14}^{2} &{}={}&\frac{8 \pi ^2}{147}-2\lambdabar_{1/14}^2\\[5pt]&\vphantom{\frac{\int}{1}}\mathsf S_{3,1}\left(\frac1k;\mathfrak r_{1/7}\right)=\int_0^1\Li_2\left( \mathfrak r_{1/7}t ^2(1-t)\right)\frac{\D t}{1-t}&&\vphantom{\frac{\int}{1}}\mathsf S_{3,0}\left(\frac1k;\mathfrak r_{2/7}\right)=\int_0^1\Li_2\left( \mathfrak r_{2/7}t ^2(1-t)\right)\frac{\D t}{1-t}&&\vphantom{\frac{\int}{1}}\mathsf S_{3,0}\left(\frac1k;\mathfrak r_{3/7}\right)=\int_0^1\Li_2\left( \mathfrak r_{3/7}t ^2(1-t)\right)\frac{\D t}{1-t}\\{}={}&12 \RE\Li_{1,1,1}(\eta ^2,1,\eta )&{}={}&-12 \RE\left[\Li_{1,1,1}(\eta ^4,1,\eta )+\Li_{1,1,1}(\eta ^2,1,\eta )\right]&{}={}&12 \RE\Li_{1,1,1}(\eta ^4,1,\eta )\\&{}+ \RE\left[5\Li_3(\eta ^2)+2 \Li_3(\eta )\right]+\frac{48 \zeta (3)}{49}&&{}+ \RE\left[3\Li_3(\eta ^2)+7 \Li_3(\eta )\right]-\frac{366 \zeta (3)}{49}&&{}-\RE\left[8\Li_3(\eta ^2)+9 \Li_3(\eta )\right]-\frac{144 \zeta (3)}{49}\\&{}-12  \big(\lambdabar_{1/7}-\lambdabar_{2/7}\big)\RE\Li_{1,1}(\eta ^2,\eta )&&{} +12 \lambdabar_{3/14} \RE\Li_{1,1}(\eta ^2,\eta) &&{}+12 \lambdabar_{1/14} \RE\Li_{1,1}(\eta ^2,\eta)\\&{} -\frac{2\pi  \I\left[4\Li_2(\eta ^3)-4\Li_2(\eta ^2)+5\Li_2(\eta )\right]}{7}  &&{}-\frac{2\pi  \I\left[\Li_2(\eta ^3)-\Li_2(\eta ^2)-7\Li_2(\eta )\right]}{7} &&{}-\frac{2\pi  \I\left[2\Li_2(\eta ^3)-2\Li_2(\eta ^2)-5\Li_2(\eta )\right]}{7} \\&{}+\frac{2\big(10 \lambdabar_{1/7}^3-3 \lambdabar_{2/7} ^{}\lambdabar_{1/7}^2-15 \lambdabar_{2/7}^2 \lambdabar_{1/7}^{}+11 \lambdabar_{2/7}^3\big)}{3}&&{}-\frac{2\big(6 \lambdabar_{2/7}^3+18 \lambdabar_{3/14}^{} \lambdabar_{2/7}^2-2 \lambdabar_{3/14}^3\big)}{3} &&{}+\frac{2\big(3 \lambdabar_{1/7}^3-18\lambdabar_{1/7}^{} \lambdabar_{2/7}^{} \lambdabar_{1/14}^{} -\lambdabar_{1/14}^3 \big)}{3}  \\&{}-3 \big(\lambdabar_{1/7}-\lambdabar_{2/7}\big){}^2 \pL-\frac{\pi ^2 \big(337 \lambdabar_{1/7}+320 \lambdabar_{2/7}-96 \pL\big)}{294} &&{}-6 \lambdabar_{3/14}^2 \lambdabar_{2/7}^{}+\frac{\pi ^2 \big(24 \lambdabar_{1/7}+21 \lambdabar_{2/7}+23 \lambdabar_{3/14}\big)}{147}&&{}+6\lambdabar_{1/14}^2\lambdabar_{2/7}^{} -\frac{\pi ^2 \big(57 \lambdabar_{1/7}-48 \lambdabar_{2/7}-70 \lambdabar_{1/14}\big)}{294}  \\[3pt]\hline &\vphantom{\frac{\int}{1}}\mathsf S_{3,1}\left(\mathsf H_{2k}-\mathsf H_{3k};\mathfrak r_{1/7}\right)&&\vphantom{\frac{\int}{1}}\mathsf S_{3,1}\left(\mathsf H_{2k}-\mathsf H_{3k};\mathfrak r_{2/7}\right)&&\vphantom{\frac{\int}{1}}\mathsf S_{3,1}\left(\mathsf H_{2k}-\mathsf H_{3k};\mathfrak r_{3/7}\right)\\{}={}&\int_0^1\Li_2\left( \mathfrak r_{1/7}t ^2(1-t)\right)\frac{\log t\D t}{1-t}&{}={}&\int_0^1\Li_2\left( \mathfrak r_{2/7}t ^2(1-t)\right)\frac{\log t\D t}{1-t}&{}={}&\int_0^1\Li_2\left( \mathfrak r_{3/7}t ^2(1-t)\right)\frac{\log t\D t}{1-t}\\{}={}& 10\RE\Li_{1,1,1}(\eta ^2,1,\eta ) &{}={}&-10 \RE\left[\Li_{1,1,1}(\eta ^4,1,\eta )+\Li_{1,1,1}(\eta ^2,1,\eta )\right] &{}={}&10 \RE\Li_{1,1,1}(\eta ^4,1,\eta ) \\&{}+\frac{5\RE\left[5\Li_3(\eta ^2)+2 \Li_3(\eta )\right]}{6}+\frac{40 \zeta (3)}{49}&&{}+\frac{5\RE\left[3\Li_3(\eta ^2)+7 \Li_3(\eta )\right]}{6}-\frac{305 \zeta (3)}{49} &&{}-\frac{5\RE\left[8\Li_3(\eta ^2)+9 \Li_3(\eta )\right]}{6}-\frac{120 \zeta (3)}{49}\\&{}-6   \big(\lambdabar_{1/7}-\lambdabar_{2/7}\big)\RE\Li_{1,1}(\eta ^2,\eta )&&{}+6 \lambdabar_{3/14} \RE\Li_{1,1}(\eta ^2,\eta )&&{}+6 \lambdabar_{1/14} \RE\Li_{1,1}(\eta ^2,\eta)\\&{} -\frac{\pi  \I\left[4\Li_2(\eta ^3)-4\Li_2(\eta ^2)+11\Li_2(\eta )\right]}{7}  &&{}-\frac{\pi  \I\left[\Li_2(\eta ^3)-\Li_2(\eta ^2)-11\Li_2(\eta )\right]}{7} &&{}-\frac{\pi  \I\left[2\Li_2(\eta ^3)-2\Li_2(\eta ^2)-7\Li_2(\eta )\right]}{7}  \\&{}+\frac{2 \lambdabar_{1/7}^3+3 \lambdabar_{2/7}^{} \lambdabar_{1/7}^2-3 \lambdabar_{2/7}^2 \lambdabar_{1/7}^{}+3 \lambdabar_{2/7}^3}{3}&&{}-\frac{\lambdabar_{2/7} \big(10 \lambdabar_{2/7}^2+24 \lambdabar_{3/14}^{} \lambdabar_{2/7}^{}\big)}{3}&&{}+\frac{5 \lambdabar_{1/7}^3-6 \lambdabar_{1/14} \lambdabar_{1/7}^2-18 \lambdabar_{2/7} \lambdabar_{1/14} \lambdabar_{1/7}+\lambdabar_{1/14}^3}{3}  \\&{} -\frac{\big(\lambdabar_{1/7}-\lambdabar_{2/7}\big)^2 \pL}{2}-\frac{\pi ^2 \big(377 \lambdabar_{1/7}-50 \lambdabar_{2/7}-32 \pL\big)}{588} &&{}-5\lambdabar_{3/14}^2 \lambdabar_{2/7}^{}+\frac{\pi ^2 \big(32 \lambdabar_{1/7}+43 \lambdabar_{2/7}+74 \lambdabar_{3/14}\big)}{294}&&{}+5\lambdabar_{1/14}^2\lambdabar_{2/7}^{}-\frac{\pi ^2 \big(31 \lambdabar_{1/7}-16 \lambdabar_{2/7}+60 \lambdabar_{1/14}\big)}{588}
\\[3pt]\hline &\vphantom{\frac{\int}{1}}\mathsf S_{3,1}\left(\mathsf H_{k-1}-\mathsf H_{3k};\mathfrak r_{1/7}\right)&&\vphantom{\frac{\int}{1}}\mathsf S_{3,1}\left(\mathsf H_{k-1}-\mathsf H_{3k};\mathfrak r_{2/7}\right)&&\vphantom{\frac{\int}{1}}\mathsf S_{3,1}\left(\mathsf H_{k-1}-\mathsf H_{3k};\mathfrak r_{3/7}\right)\\{}={}&\int_0^1\Li_2\left( \mathfrak r_{1/7}t ^2(1-t)\right)\frac{\log (1-t)\D t}{1-t}&{}={}&\int_0^1\Li_2\left( \mathfrak r_{2/7}t ^2(1-t)\right)\frac{\log (1-t)\D t}{1-t}&{}={}&\int_0^1\Li_2\left( \mathfrak r_{3/7}t ^2(1-t)\right)\frac{\log (1-t)\D t}{1-t}\\{}={}& 4 \RE\Li_{1,1,1}(\eta ^2,1,\eta )&{}={}&-4  \RE\left[\Li_{1,1,1}(\eta ^4,1,\eta )+\Li_{1,1,1}(\eta ^2,1,\eta )\right]&{}={}&4 \RE\Li_{1,1,1}(\eta ^4,1,\eta )\\{}&+\frac{\RE\left[5\Li_3(\eta ^2)+2 \Li_3(\eta )\right]}{3} +\frac{16 \zeta (3)}{49} &&{}+\frac{ \RE\left[3\Li_3(\eta ^2)+7 \Li_3(\eta )\right]}{3}-\frac{122 \zeta (3)}{49} &&{}-\frac{\RE\left[8\Li_3(\eta ^2)+9 \Li_3(\eta )\right]}{3}-\frac{48 \zeta (3)}{49}   \\&{}-\frac{6\pi\I\Li_2(\eta )}{7}-\frac{2\big(3 \lambdabar_{1/7}^3-9 \lambdabar_{2/7}^2 \lambdabar_{1/7}^{}+5 \lambdabar_{2/7}^3\big)}{3} &&{}+\frac{4\pi\I\Li_2(\eta )}{7}-\frac{2\big(2 \lambdabar_{2/7}^3+3 \lambdabar_{3/14}^{} \lambdabar_{2/7}^2 +2 \lambdabar_{3/14}^3\big)}{3}&&{}+\frac{2\pi\I\Li_2(\eta )}{7}+\frac{2\big(\lambdabar_{1/7}^3-3 \lambdabar_{1/14} \lambdabar_{1/7}^2\big)}{3} \\&{}+ \big(\lambdabar_{1/7}-\lambdabar_{2/7}\big)^2 \pL+\frac{\pi ^2 \big(44 \lambdabar_{1/7}+121 \lambdabar_{2/7}-32 \pL\big)}{294}&&{}-2\lambdabar_{3/14}^2 \lambdabar_{2/7}^{}+\frac{\pi ^2 \big(8 \lambdabar_{1/7}+22 \lambdabar_{2/7}+47 \lambdabar_{3/14}\big)}{294}&&{}+2\lambdabar_{1/14}^2\lambdabar_{2/7}^{}+\frac{\pi ^2 \big(13 \lambdabar_{1/7}-16 \lambdabar_{2/7}-81 \lambdabar_{1/14}\big)}{294} \\[3pt]\hline\hline
\end{array}\end{align*}\end{tiny}\end{table}\begin{table}[p]\caption{Selected CMZV characterizations of  $ \mathsf S_{3,r}\left( a_k;\vartheta^\pm\right)$ at level $8$, where $ \vartheta^\pm\colonequals\pm\frac{\big(\sqrt{2}\pm1\big)^2}{\sqrt{2}}=\frac{4\pm3\sqrt{2}}{2}$, $ \theta\colonequals e^{\pi i/4}$, $\lambda\colonequals\log2 $, $\widetilde \lambda\colonequals\log\big(1+\sqrt{2}\big)$, and $ G\colonequals\I\Li_2(i)$ \label{tab:Zk(8)3k}}

\begin{scriptsize}\begin{align*}\begin{array}{@{}c@{}l|@{}c@{}l} \hline\hline
&\vphantom{\frac{\int}{1}}\mathsf S_{3,0}\left(\frac1k;\mathfrak{r}_{1/8}=\vartheta^+\right)=\int_0^1\Li_1\left( \vartheta^+t ^2(1-t)\right)\frac{\D t}{1-t}&&\vphantom{\frac{\int}{1}}\mathsf S_{3,0}\left(\frac1k;\mathfrak{r}_{3/8}=\vartheta^-\right)=\int_0^1\Li_1\left( \vartheta^-t ^2(1-t)\right)\frac{\D t}{1-t}\\{}={}&\frac{25 \pi ^2}{96}-\frac{(2 \widetilde{\lambda }+\lambda )^2}{8} &{}={}&\frac{\pi ^2}{96}-\frac{(2 \widetilde{\lambda }-\lambda )^2}{8}\\[5pt]&\vphantom{\frac{\int}{1}}\mathsf S_{3,1}\left(\frac1k;\vartheta^+\right)=\int_0^1\Li_2\left( \vartheta^+t ^2(1-t)\right)\frac{\D t}{1-t}&&\vphantom{\frac{\int}{1}}\mathsf S_{3,1}\left(\frac1k;\vartheta^-\right)=\int_0^1\Li_2\left( \vartheta^-t ^2(1-t)\right)\frac{\D t}{1-t}\\{}={}&12 \RE\Li_{1,1,1}(i,1,\theta )+2 \RE\Li_3(\theta )-\frac{15 \zeta (3)}{64}&{}={}&12 \RE\Li_{1,1,1}(i,1,-\theta) -2 \RE\Li_3(\theta )-\frac{9 \zeta (3)}{32}\\&{}+3 (2 \widetilde{\lambda }+\lambda ) \RE\Li_{1,1}(i,\theta ) -3 \pi  \I\Li_2(\theta )+\frac{15 \pi G}{8}&&{}+3 (2 \widetilde{\lambda }-\lambda) \RE\Li_{1,1}(i,\theta )+3 \pi  \I\Li_2(\theta )-\frac{9 \pi  G}{8} \\&{}-\frac{15 \lambda ^2 \widetilde{\lambda }+12 \lambda  \widetilde{\lambda }^2+20 \widetilde{\lambda }^3+\lambda ^3}{24} +\frac{\pi ^2 (136 \widetilde{\lambda }-43 \lambda)  }{192} &&{} -\frac{24 \lambda ^2 \widetilde{\lambda }-12 \lambda  \widetilde{\lambda }^2+32 \widetilde{\lambda }^3-25 \lambda ^3}{48} -\frac{\pi ^2 (17 \widetilde{\lambda }+11 \lambda )}{96} \\[3pt]\hline &\vphantom{\frac{\int}{1}}\mathsf S_{3,1}(\mathsf H_{2k}-\mathsf H_{3k};\vartheta^+)=\int_0^1\Li_1\left( \vartheta^+t ^2(1-t)\right)\frac{\log t\D t}{1-t}&&\vphantom{\frac{\int}{1}}\mathsf S_{3,1}(\mathsf H_{2k}-\mathsf H_{3k};\vartheta^-)=\int_0^1\Li_1\left( \vartheta^-t ^2(1-t)\right)\frac{\log t\D t}{1-t}\\{}={}&10 \RE\Li_{1,1,1}(i,1,\theta ) +\frac{5 \RE\Li_3(\theta )}{3}-\frac{25 \zeta (3)}{128}&{}={}&10 \RE\Li_{1,1,1}(i,1,-\theta )  -\frac{5 \RE\Li_3(\theta )}{3}-\frac{15 \zeta (3)}{64}\\&{}+\frac{3 (2 \widetilde{\lambda }+\lambda ) \RE\Li_{1,1}(i,\theta ) }{2} -\frac{5 \pi  \I\Li_2(\theta )}{2}+\frac{15 \pi  G}{16}&&{}+\frac{3 (2 \widetilde{\lambda }-\lambda) \RE\Li_{1,1}(i,\theta )}{2}+\frac{5\pi  \I\Li_2(\theta )}{2}-\frac{17 \pi  G}{16} \\&{} -\frac{15 \lambda ^2 \widetilde{\lambda }+4 \widetilde{\lambda }^3-2 \lambda ^3}{48} +\frac{5\pi ^2 (28 \widetilde{\lambda }-3 \lambda )}{384} &&{}-\frac{24 \lambda ^2 \widetilde{\lambda }-36 \lambda  \widetilde{\lambda }^2+64 \widetilde{\lambda }^3-31 \lambda ^3}{96} -\frac{\pi ^2 (19 \widetilde{\lambda }+21 \lambda )}{192} \\[3pt]\hline  &\vphantom{\frac{\int}{1}}\mathsf S_{3,1}(\mathsf H_{k-1}-\mathsf H_{3k};\vartheta^+)=\int_0^1\Li_1\left( \vartheta^+t ^2(1-t)\right)\frac{\log(1- t)\D t}{1-t}&&\vphantom{\frac{\int}{1}}\mathsf S_{3,1}(\mathsf H_{k-1}-\mathsf H_{3k};\vartheta^-)=\int_0^1\Li_1\left( \vartheta^-t ^2(1-t)\right)\frac{\log(1-t)\D t}{1-t}\\[5pt]{}={}&4 \RE\Li_{1,1,1}(i,1,\theta )+\frac{2 \RE\Li_3(\theta )}{3}-\frac{5 \zeta (3)}{64}-\pi  \I\Li_2(\theta )&{}={}&4 \RE\Li_{1,1,1}(i,1,-\theta )-\frac{2 \RE\Li_3(\theta )}{3}-\frac{3 \zeta (3)}{32}+\pi  \I\Li_2(\theta )-\frac{\pi  G}{2}\\&{}-\frac{6 \lambda ^2 \widetilde{\lambda }-12 \lambda  \widetilde{\lambda }^2-24 \widetilde{\lambda }^3-5 \lambda ^3}{96} -\frac{\pi ^2 (46 \widetilde{\lambda }-3 \lambda)}{384} &&{}+\frac{12 \lambda ^2 \widetilde{\lambda }+24 \lambda  \widetilde{\lambda }^2-48 \widetilde{\lambda }^3+10 \lambda ^3}{192} -\frac{\pi ^2 (2 \widetilde{\lambda }+21 \lambda )}{384} \\[3pt]\hline\hline
\end{array}\end{align*}\end{scriptsize}\end{table}
\begin{proof}[Proof of Corollary \ref{cor:3kCMZV}]
Feeding Theorem \ref{thm:3kGPL}(a)  to Au's test, while setting $x$ as $ -1$, $ -\frac{1\pm\sqrt{5}}{2}$, $-2$, $-\big(1\pm\sqrt{2}\big) $, $ -\left(\frac{1\pm\sqrt{5}}{2}\right)^2$,  and $ -\big(1\pm \sqrt{3}\big)$,  respectively, we receive in return{\allowdisplaybreaks\begin{align}\small\begin{array}{@{}r@{{}\in{}}lr@{{}\in{}}lr@{{}\in{}}l}\mathsf S_{3,r}\left(a_k;\frac{1}{2}\right)&\mathfrak Z_{r+2}(4),& \mathsf S_{3,r}\left(a_k;\frac{1\pm\sqrt{5}}{2}\right)&\mathfrak Z_{r+2}(5),&
\mathsf S_{3,r}\left(a_k;\frac{2^{3}}{3}\right)&\mathfrak Z_{r+2}(6)\\\mathsf S_{3,r}\left(a_k;\pm\frac{\big(\sqrt{2}\pm1\big)^2}{\sqrt{2}}\right)&\mathfrak Z_{r+2}(8),&\mathsf S_{3,r}\left(a_k;\pm\frac{1}{\sqrt{5}}\left(\frac{1\pm\sqrt{5}}{2}\right)^{5}\right)&\mathfrak Z_{r+2}(10),&\mathsf S_{3,r}\left(a_k;2\big(1\pm\sqrt{3}\big)\right)&\mathfrak Z_{r+2}(12),\end{array}\label{eq:3kZ(4-12)}
\end{align}}when $ (a_k)_{k\in\mathbb Z_{>0}}\in\big\{\big(\frac1k\big)_{k\in\mathbb Z_{>0}},$  $(\mathsf H_{k}-\mathsf H_{3k})_{k\in\mathbb Z_{>0}},$ $(\mathsf H_{2k}-\mathsf H_{3k})_{k\in\mathbb Z_{>0}}\big\}$. For $ x=-3$, we get
  \eqref{eq:3kZ(6)'}.

We note that  the formulae in   \eqref{eq:3kZ(4-12)} already entail \eqref{eq:3kZ(N)}  with $ N\in\{4$, $5$, $6$, $8$, $10$, $12\}$. For $ N\in\{7,9\}$, simply run Au's test again. To move onto generic positive integers $N$, note that \begin{align}\begin{split}&
1-\mathfrak r_\nu t^2(1-t)\\={}&\left[ 1+t\frac{\big(e^{2\nu\pi i  }-e^{2\pi i/3}\big) \big(e^{2\nu\pi i  }-e^{-2\pi i/3}\big) }{e^{2\nu\pi i  }} \right]\left[ 1-t\frac{\big(e^{2\nu\pi i  }-e^{2\pi i/3}\big) \big(e^{2\nu\pi i  }-e^{-2\pi i/3}\big) }{\big(e^{2\nu\pi i  }+1\big)e^{2\nu\pi i  }} \right]\\{}&\times\left[ 1-t\frac{\big(e^{2\nu\pi i  }-e^{2\pi i/3}\big) \big(e^{2\nu\pi i  }-e^{-2\pi i/3}\big) }{e^{2\nu\pi i  }+1} \right],
\end{split}
\end{align}  so  a fibration procedure akin to the proof of Theorem \ref{thm:3kGPL} leaves us \begin{align}
\mathsf S_{3,r}\left( a_k ;\mathfrak r_{m/N}\right)\in\mathfrak G_{r+2;r+2}^{(e^{2\pi i m/N})}\left[ \big\{e^{2\pi i/3},e^{-2\pi i/3},-1\big\}; \big\{0,1,e^{2\pi i/3},e^{-2\pi i/3},-1\big\}\right].
\end{align}In view of the fact that \cite[(1.3$''$)]{Zhou2023SunCMZV} \begin{align}\mathfrak
Z_{k}(N')=\Span_{\mathbb Q}\left\{G(z_1,\dots,z_k;z)\left|\begin{smallmatrix}z_1^{N'},\dots,z_{k}^{N'}\in\{0,1\}\\z^{N'}=1;z_1\neq z\end{smallmatrix}\right.\right\}\tag{\ref{eq:Zk(N)_defn}$'$}\label{eq:Zk(N)_defn'}
\end{align}for  $ N'=\lcm(6,N)\in\mathbb Z_{\geq3}$, we have   \eqref{eq:3kZ(N)'}.
\end{proof}

\begin{table}[p]\caption{Selected CMZV characterizations of  $ \mathsf S_{3,r}\left(a_k;\kappa^\pm\right)$  at level $10$, where $ \kappa^\pm\colonequals\pm\frac{1}{\sqrt{5}}\left(\frac{1\pm\sqrt{5}}{2}\right)^{5}=\frac{25\pm11\sqrt{5}}{10}$, $ \varsigma\colonequals e^{2\pi i/5}$, $ \lambda\colonequals\log2$, $ \pd\colonequals\log \phi$, and $ \mathscr L\colonequals\log5$\label{tab:Zk(10)3k}}

\begin{scriptsize}\begin{align*}\begin{array}{@{}c@{}l|@{}c@{}l}\hline\hline
&\mathsf S_{3,0}\left(\frac1k;\mathfrak{r}_{1/10}=\kappa^+\right)=\int_0^1\Li_1\left( \kappa^+t ^2(1-t)\right)\frac{\D t}{1-t}&&\mathsf S_{3,0}\left(\frac1k;\mathfrak{r}_{3/10}=\kappa^-\right)=\int_0^1\Li_1\left( \kappa^-t ^2(1-t)\right)\frac{\D t}{1-t}\vphantom{\frac{\int}{\int}}\\{}={}&\frac{49 \pi ^2}{150}-\frac{(2 \pd+\mathscr{L} )^2}{8}&{}={}&\frac{\pi ^2}{150}-\frac{(2 \pd-\mathscr{L} )^2}{8} \\[5pt]&\mathsf S_{3,1}\left(\frac1k;\kappa^+\right)=\int_0^1\Li_2\left( \kappa^+t ^2(1-t)\right)\frac{\D t}{1-t}&&\mathsf S_{3,1}\left(\frac1k;\kappa^-\right)=\int_0^1\Li_2\left( \kappa^-t ^2(1-t)\right)\frac{\D t}{1-t}\\{}={}&12 \RE\Li_{1,1,1}\left(\varsigma ,1,-\frac{1}{\varsigma ^2}\right)+\frac{22\RE\Li_3\left(-\frac{1}{\varsigma ^2}\right)}{5} -\frac{234 \zeta (3)}{125}&{}={}&-12 \RE\Li_{1,1,1}\left(\varsigma ,1,-\frac{1}{\varsigma ^2}\right)+\frac{19\RE\Li_3\left(-\frac{1}{\varsigma ^2}\right)}{5}-\frac{1008 \zeta (3)}{125}  \\&{}+3 (2 \pd +\mathscr{L}) \RE\Li_{1,1}\left(\varsigma ,-\frac{1}{\varsigma ^2}\right)-\frac{\pi  \I\big[15 \Li_2\big(\varsigma ^2\big)+11\Li_2(\varsigma )\big]}{10}  &&{} -6 \RE\Li_{1,1,1}\left(-\varsigma ,1,-\frac{1}{\varsigma ^2}\right)+3 (2 \pd -\mathscr{L}) \RE\Li_{1,1}\left(\varsigma ,-\frac{1}{\varsigma ^2}\right)  \\&{}-\frac{12 \pd  \mathscr{L}^2+60 \pd ^2 \mathscr{L}+160 \pd ^3-\mathscr{L}^3}{48} +\frac{\pi ^2 (652 \pd -49 \mathscr{L})}{300} &&{}+\frac{\pi  \I\big[23 \Li_2\big(\varsigma ^2\big)+15\Li_2(\varsigma )\big]}{10}+\frac{12 \pd  \mathscr{L}^2-60 \pd ^2 \mathscr{L}+112 \pd ^3+\mathscr{L}^3}{48}\\&&&{} +\frac{\pi ^2 (8 \pd -\mathscr{L})}{300} \\[3pt]\hline \vphantom{\frac{\int}{\int}}&\mathsf S_{3,1}\left(\mathsf H_{2k}-\mathsf H_{3k};\kappa^+\right)=\int_0^1\Li_1\left( \kappa^+t ^2(1-t)\right)\frac{\log t\D t}{1-t}&&\mathsf S_{3,1}\left(\mathsf H_{2k}-\mathsf H_{3k};\kappa^-\right)=\int_0^1\Li_1\left( \kappa^-t ^2(1-t)\right)\frac{\log t\D t}{1-t}\\{}={}& 10 \RE\Li_{1,1,1}\left(\varsigma ,1,-\frac{1}{\varsigma ^2}\right)+\frac{11\RE\Li_3\left(-\frac{1}{\varsigma ^2}\right)}{3} -\frac{39 \zeta (3)}{25}&{}={}&-10 \RE\Li_{1,1,1}\left(\varsigma ,1,-\frac{1}{\varsigma ^2}\right)+\frac{19\RE\Li_3\left(-\frac{1}{\varsigma ^2}\right)}{6} -\frac{168 \zeta (3)}{25}\\&{}+\frac{3(2 \pd +\mathscr{L}) \RE\Li_{1,1}\left(\varsigma ,-\frac{1}{\varsigma ^2}\right)}{2} -\frac{\pi\I\big[39 \Li_2\big(\varsigma ^2\big)+23\Li_2(\varsigma )\big]}{20}&&{}-5 \RE\Li_{1,1,1}\left(-\varsigma ,1,-\frac{1}{\varsigma ^2}\right)+\frac{3(2 \pd -\mathscr{L}) \RE\Li_{1,1}\left(\varsigma ,-\frac{1}{\varsigma ^2}\right)}{2}    \\&{} -\frac{\pd  \big(44 \pd ^2+3 \mathscr{L}^2-12 \pd  \mathscr{L}\big)}{48} +\frac{\pi ^2 (245 \pd +6 \mathscr{L})}{300} &&{}+\frac{\pi  \I\big[39\Li_2\big(\varsigma ^2\big)+23\Li_2(\varsigma )\big]}{20} +\frac{\pd  \big(100 \pd ^2+3 \mathscr{L}^2-36 \pd  \mathscr{L}\big)}{48}\\&&&{} -\frac{\pi ^2 (25 \pd -19 \mathscr{L})}{300} \\[3pt]\hline\vphantom{\frac{\int}{\int}}&\mathsf S_{3,1}\left(\mathsf H_{k-1}-\mathsf H_{3k};\kappa^+\right)=\int_0^1\Li_1\left( \kappa^+t ^2(1-t)\right)\frac{\log(1- t)\D t}{1-t}&&\mathsf S_{3,1}\left(\mathsf H_{k-1}-\mathsf H_{3k};\kappa^-\right)=\int_0^1\Li_1\left( \kappa^-t ^2(1-t)\right)\frac{\log (1-t)\D t}{1-t}\\{}={}& 4 \RE\Li_{1,1,1}\left(\varsigma ,1,-\frac{1}{\varsigma ^2}\right)+\frac{22\RE\Li_3\left(-\frac{1}{\varsigma ^2}\right)}{15}-\frac{78 \zeta (3)}{125}&{}={}&-4 \RE\Li_{1,1,1}\left(\varsigma ,1,-\frac{1}{\varsigma ^2}\right)+\frac{19\RE\Li_3\left(-\frac{1}{\varsigma ^2}\right)}{15} -\frac{336 \zeta (3)}{125}\\&{}-\frac{3\pi\I\big[2 \Li_2\big(\varsigma ^2\big)+\Li_2(\varsigma )\big]}{5}  +\frac{36 \pd ^2 \mathscr{L}+32 \pd ^3-\mathscr{L}^3}{48}   &&{}-2 \RE\Li_{1,1,1}\left(-\varsigma ,1,-\frac{1}{\varsigma ^2}\right)+\frac{2\pi\I\big[2 \Li_2\big(\varsigma ^2\big)+ \Li_2(\varsigma )\big]}{5}  \\&{}-\frac{\pi ^2 (65 \pd-3\mathscr L )}{150}&&{}-\frac{\mathscr{L}^3+12 \pd ^2 \mathscr{L}-48 \pd ^3}{48} -\frac{\pi ^2 (28 \pd-19 \mathscr{L} )}{300} \\[3pt]\hline\hline
\end{array}\end{align*}\end{scriptsize}\end{table}\begin{table}[ht]\caption{Selected CMZV characterizations of  $ \mathsf S_{3,r}\left(a_k;\mu^\pm\right)$  at level $12$, where $ \mu^\pm\colonequals2\big(1\pm \sqrt{3}\big)$,  $ \omega\colonequals e^{2\pi i/3}$,  $ \varrho\colonequals e^{\pi i/3}$, $ \lambda\colonequals\log2$, $ \widetilde\varLambda\colonequals\log\big(2+\sqrt{3}\big)$, and $ G\colonequals\I\Li_2(i)$\label{tab:Zk(12)3k}}

\begin{scriptsize}\begin{align*}\begin{array}{@{}c@{}l|@{}c@{}l}\hline\hline
&\mathsf S_{3,0}\left(\frac1k;\mathfrak{r}_{1/12}=\mu^+\right)=\int_0^1\Li_1\left( \mu^+t ^2(1-t)\right)\frac{\D t}{1-t}&&\mathsf S_{3,0}\left(\frac1k;\mathfrak{r}_{5/12}=\mu^-\right)=\int_0^1\Li_1\left( \mu^-t ^2(1-t)\right)\frac{\D t}{1-t}\vphantom{\frac{\int}{\int}}\\{}={}&\frac{3 \pi ^2}{8}-\frac{\widetilde\varLambda^2}{2}&{}={}&\frac{\pi ^2}{24}-\frac{\widetilde\varLambda^2}{2}\\[5pt]&\mathsf S_{3,1}\left(\frac1k;\mu^+\right)=\int_0^1\Li_2\left( \mu^+t ^2(1-t)\right)\frac{\D t}{1-t}&&\mathsf S_{3,1}\left(\frac1k;\mu^-\right)=\int_0^1\Li_2\left( \mu^-t ^2(1-t)\right)\frac{\D t}{1-t}\\{}={}& 12 \RE\Li_{1,1,1}\left(\varrho ,1,\frac{i}{\varrho }\right)+\frac{19 \zeta (3)}{16}&{}={}&12 \RE\Li_{1,1,1}\left(\varrho ,1,-\frac{i}{\varrho }\right)+\frac{19 \zeta (3)}{16}\\&{}+6 \widetilde{\varLambda } \RE\Li_{1,1}\left(\varrho ,\frac{i}{\varrho }\right)+\frac{21\pi  \I\Li_2(\omega )}{16} -\frac{19 \pi  G}{6} &&{}+6 \widetilde{\varLambda } \RE\Li_{1,1}\left(\varrho ,\frac{i}{\varrho }\right)-\frac{9\pi  \I\Li_2(\omega )}{16}+\frac{17 \pi  G}{6}   \\[5pt]&{}-\frac{\widetilde{\varLambda }^2 \big(\widetilde{\varLambda }+9 \lambda \big)}{12} +\frac{\pi ^2 \big(25 \widetilde{\varLambda }+27 \lambda \big)}{48} &&{}+\frac{\widetilde{\varLambda }^2 \big(\widetilde{\varLambda }-9 \lambda\big)}{12} -\frac{\pi ^2 \big(19 \widetilde{\varLambda }-3 \lambda\big)}{48} \\[5pt]\hline \vphantom{\frac{\int}{\int}}&\mathsf S_{3,1}\left(\mathsf H_{2k}-\mathsf H_{3k};\mu^+\right)=\int_0^1\Li_1\left( \mu^+t ^2(1-t)\right)\frac{\log t\D t}{1-t}&&\mathsf S_{3,1}\left(\mathsf H_{2k}-\mathsf H_{3k};\mu^-\right)=\int_0^1\Li_1\left( \mu^-t ^2(1-t)\right)\frac{\log t\D t}{1-t}\\{}={}&10 \RE\Li_{1,1,1}\left(\varrho ,1,\frac{i}{\varrho }\right)+\frac{95 \zeta (3)}{96}&{}={}&10 \RE\Li_{1,1,1}\left(\varrho ,1,-\frac{i}{\varrho }\right)+\frac{95 \zeta (3)}{96}\\&{}+3 \widetilde{\varLambda } \RE\Li_{1,1}\left(\varrho ,\frac{i}{\varrho }\right)+\frac{5\pi  \I\Li_2(\omega )}{32} -\frac{89 \pi  G}{36}&&{}+3 \widetilde{\varLambda } \RE\Li_{1,1}\left(\varrho ,\frac{i}{\varrho }\right)-\frac{25\pi  \I\Li_2(\omega )}{32}+\frac{83 \pi  G}{36} \\&{}+\frac{\widetilde{\varLambda }^2 \big(\widetilde{\varLambda }-3 \lambda \big)}{24} +\frac{\pi ^2 \big(109 \widetilde{\varLambda }+27 \lambda \big)}{288} &&{}-\frac{\widetilde{\varLambda }^2 \big(\widetilde{\varLambda }+3 \lambda \big)}{24} -\frac{\pi ^2 \big(91 \widetilde{\varLambda }-3 \lambda \big)}{288} \\[3pt]\hline \vphantom{\frac{\int}{\int}}&\mathsf S_{3,1}\left(\mathsf H_{k-1}-\mathsf H_{3k};\mu^+\right)=\int_0^1\Li_1\left( \mu^+t ^2(1-t)\right)\frac{\log(1- t)\D t}{1-t}&&\mathsf S_{3,1}\left(\mathsf H_{k-1}-\mathsf H_{3k};\mu^-\right)=\int_0^1\Li_1\left( \mu^-t ^2(1-t)\right)\frac{\log (1-t)\D t}{1-t}\\{}={}&4 \RE\Li_{1,1,1}\left(\varrho ,1,\frac{i}{\varrho }\right)+\frac{19 \zeta (3)}{48}-\frac{\pi  \I\Li_2(\omega )}{2} -\frac{8 \pi  G}{9}&{}={}&4 \RE\Li_{1,1,1}\left(\varrho ,1,-\frac{i}{\varrho }\right)+\frac{19 \zeta (3)}{48}-\frac{\pi  \I\Li_2(\omega )}{2} +\frac{8 \pi  G}{9}\\&{}+\frac{\lambda  \widetilde{\varLambda }^2}{4}-\frac{\pi ^2 \big(10 \widetilde{\varLambda }+27 \lambda \big)}{144} &&{}+\frac{\lambda  \widetilde{\varLambda }^2}{4}-\frac{\pi ^2 \big(14 \widetilde{\varLambda }+3 \lambda \big)}{144} \\[3pt]\hline\hline
\end{array}\end{align*}\end{scriptsize}\end{table}

\begin{remark}Some special values of $\mathsf  S_{3,r}\big(a_k;\mathfrak r_{1/4}=\frac{1}{2}\big)$ have already appeared in \cite[Remark 1.4]{Zhou2023SunCMZV}. We illustrate the $ N\in\{5$, $6$, $7$, $8$, $10$, $12\}$ situations of the corollary above by Tables \ref{tab:Zk(5)3k}--\ref{tab:Zk(12)3k}.\footnote{The notations we pick for these tables are intended to be mnemonic.  For example, to represent roots of unity, the Greek letters $ \omega$, $\varsigma$, $\varrho $, $\eta$, and  $\theta$ are chosen for their visual similarities to the Arabic numerals $3$, $5$, $6$, $7$, and $8$.   The  letters $ \vartheta$, $\kappa $, and $\mu$ are selected because they occupy the 8th, 10th, and 12th positions in the Greek alphabet.} The current lack of basis reduction for $ \mathfrak Z_3(9)$ in Au's  \texttt{MultipleZetaValues}  package prevents us from going beyond the following examples at level $9$:\begin{align}\mathsf S_{3,0}\left( \frac{1}{k} ;\mathfrak r_{1/9}\right)
={}&\frac{8 \pi ^2}{27}-2 \log ^2\left(2 \cos\frac{\pi }{9}\right),\\\mathsf S_{3,0}\left( \frac{1}{k} ;\mathfrak r_{2/9}\right)={}&\frac{2 \pi ^2}{27}-2 \log ^2\left(2 \cos \frac{2 \pi }{9}\right),\\\mathsf S_{3,0}\left( \frac{1}{k} ;\mathfrak r_{4/9}\right)={}&\frac{2 \pi ^2}{27}-2 \log ^2\left(2 \cos \frac{4 \pi }{9}\right).
\end{align}Notably, the tabulated entries for $ \mathsf S_{3,0}\big(\frac1k;z\big)$  (as well as the last three displayed formulae) conform to the  description in Theorem \ref{thm:conj1.2}.

All these tabulated entries are manifestly members of the respective CMZV spaces, thanks to Goncharov's filtration property     \cite[\S1.2]{Goncharov1998}:\begin{align}\mathfrak Z_j(N)\mathfrak
Z_{ k}(N)\subseteq \mathfrak
Z_{j+ k}(N),\quad \text{for all } j,k\in \mathbb Z_{\geq0},\label{eq:Zk(N)_filt}
\end{align}
 together with the observations that {\allowdisplaybreaks\begin{align}
  \log2={}&-\Li_1(-1)\in\mathfrak Z_1(2),\\\log 3={}& -2\RE\Li_1\big(e^{2\pi i/3}\big)\in\mathfrak Z_1(3),\\\log\frac{1+\sqrt{5}}{2}={}&\RE \left[\Li_1\big(e^{{2 \pi  i}/{5}}\big)- \Li_1\big(e^{{4 \pi  i}/{5}}\big)\right]\in \mathfrak Z_1(5),\\\log5={}&-2\RE \left[\Li_1\big(e^{{2 \pi  i}/{5}}\big)+ \Li_1\big(e^{{4 \pi  i}/{5}}\big)\right]\in \mathfrak Z_1(5),\\\log\big(1+\sqrt{2}\big)={}&\RE\left[\Li_1\big( e^{\pi i/4}\big)-\Li_1\big( e^{3\pi i/4}\big)\right]\in \mathfrak Z_1(8),\\ \log\big(2+\sqrt{3}\big)={}&2\RE\Li_1\big(e^{\pi i/6}\big)\in \mathfrak Z_1(12),
\end{align}}and  $ \pi i\in\mathfrak Z_1(N)$ for $ N\in\mathbb Z_{\geq3}$ \cite[Lemma 4.1]{Au2022a}.
\eor\end{remark}

 \begin{table}[p]\caption{Selected CMZV characterizations of infinite series at levels $ 4$, $5$, $6$, $7$, $8$, $9$, $10$, and $12$, where $\phi\colonequals \frac{1+\sqrt{5}}{2}$, $ \omega\colonequals e^{2\pi i/3}$, $ \varsigma\colonequals e^{2\pi i/5}$, $ \varrho\colonequals e^{2\pi i/6}$, $\eta\colonequals e^{2\pi i/7} $,  $\theta\colonequals e^{2\pi i/8} $, $   \qoppa\colonequals e^{2\pi i/9} $, $ \lambda\colonequals\log2$, $\varLambda \colonequals\log3$, $\pd\colonequals \log \phi$, $ \pL\colonequals \log 7$, $ \widetilde{\lambda}\colonequals\log\big(1+\sqrt2\big)$, $ \mathscr L\colonequals \log5$,  $\widetilde\varLambda \colonequals\log\big(2+\sqrt3\big)$,  $ s_\nu\colonequals \sin(\nu\pi)$,  $ c_\nu\colonequals \cos(\nu\pi)$, $ \lambdabar_{\nu}\colonequals \log(2s_\nu)$, and $ G\colonequals \I\Li_2(i)$\label{tab:3k+1_3k+2}}
\begin{tiny}\begin{align*}\begin{array}{c|c|l|l}\hline\hline N&m&\displaystyle\sum_{k=0}^\infty\frac{\mathsf H_k-\mathsf H_{3k+1}}{(3k+1)\binom{3k}k}\mathfrak r_{m/N}^k=\int_{0}^1\frac{\mathfrak \log (1-t)\D t}{1-\mathfrak r_{m/N}t^{2}(1-t)}&\displaystyle\sum_{k=1}^\infty\frac{\mathsf H_k-\mathsf H_{3k-1}}{(2k-1)\binom{3k}k}\mathfrak r_{m/N}^k=\frac{2}{3}\int_{0}^1\frac{\mathfrak r_{m/N}(1-t)\log (1-t)\D t}{1-\mathfrak r_{m/N}t^{2}(1-t)}\\\hline 4&1&\frac{\lambda ^2-\frac{5\pi ^2}{24}}{5}-\frac{2(2 G)}{5}\vphantom{\frac\int\int}&\frac{2\left( \lambda ^2-\frac{5\pi ^2}{24} \right)}{15}+\frac{2 G}{15}\\\hline 5&1&\frac{\big(3\sqrt{5}-1\big)\left( \pd ^2-\frac{29 \pi ^2}{150} \right)}{44}-\frac{\sqrt{2} \big(5 \sqrt{5}+2\big)\left\{\I\left[\Li_2(\varsigma )+2\Li_2\big(\varsigma ^2\big)\right]+\frac{4 \pi  \pd }{5}\right\}}{22 \sqrt{5+\sqrt{5}}} &\frac{\big(2 \sqrt{5}+3\big) \left( \pd ^2-\frac{29 \pi ^2}{150} \right)}{33}+\frac{\sqrt{2} \big(5 \sqrt{5}-9\big)\left\{\I\left[\Li_2(\varsigma )+2\Li_2\big(\varsigma ^2\big)\right]+\frac{4 \pi  \pd}{5} \right\}}{66 \sqrt{\sqrt{5}+5}}  \vphantom{\frac{\frac\int\int}\int}\\[3pt]&2&-\frac{\big(3\sqrt{5}+1\big)\left( \pd ^2+\frac{49 \pi ^2}{150} \right)}{44}+\frac{\sqrt{2} \big(5 \sqrt{5}-2\big)\left\{-\I\left[2\Li_2(\varsigma )-\Li_2\big(\varsigma ^2\big)\right]+\frac{2 \pi  \pd }{5}\right\}}{22 \sqrt{5-\sqrt{5}}}&-\frac{\big(2 \sqrt{5}-3\big)\left( \pd ^2+\frac{49 \pi ^2}{150} \right)}{33}-\frac{\sqrt{2} \big(5 \sqrt{5}+9\big)\left\{-\I\left[2\Li_2(\varsigma )-\Li_2\big(\varsigma ^2\big)\right]+\frac{2 \pi  \pd }{5} \right\}}{66 \sqrt{5-\sqrt{5}}}\\[5pt]\hline6&1&\frac{3\left[6\RE\Li_{1,1}(\omega ,\varrho )-3 \lambda  \varLambda +\varLambda ^2-\frac{\pi ^2}{6}\right]}{28} -\frac{5\sqrt{3}\left[\frac{5\I\Li_2(\omega )}{2}+\pi\lambda\right]}{28}&\frac{2\left[6\RE\Li_{1,1}(\omega ,\varrho )-3 \lambda  \varLambda +\varLambda ^2-\frac{\pi ^2}{6}\right]}{7}-\frac{2\left[\frac{5\I\Li_2(\omega )}{2}+\pi\lambda\right]}{21 \sqrt{3}}\vphantom{\frac{\frac11}1}\\[3pt]\hline 7&1&\frac{2c^{2}_{1/7}\big[  6\RE\Li_{1,1}(\eta^2,\eta)-5 \lambdabar_{1/7}^2+10 \lambdabar_{2/7}^2+\lambdabar_{1/7} \big(3 \pL-8 \lambdabar_{2/7}\big)-3 \lambdabar_{2/7} \pL-\frac{43 \pi ^2}{588}\big]}{(1+2c_{2/7})(5+4c_{2/7})}&\frac{4c^{2}_{1/7}\big[  6\RE\Li_{1,1}(\eta^2,\eta)-5 \lambdabar_{1/7}^2+10 \lambdabar_{2/7}^2+\lambdabar_{1/7} \big(3 \pL-8 \lambdabar_{2/7}\big)-3 \lambdabar_{2/7} \pL-\frac{43 \pi ^2}{588}\big]}{3(5+4c_{2/7})/(1+2c_{2/7})}\\&&{}-\frac{c_{1/7}( 2+c_{2/7} )\left\{\I\left[\Li_2(\eta)+\Li_2(\eta^{2})-\Li_2(\eta^{3})\right]-\frac{4\pi  (4 \lambdabar_{1/7}+2\lambdabar_{2/7}-\pL)}{7} \right\}}{s_{1/7}(1+2c_{2/7})(5+4c_{2/7})}&{}-\frac{(2c_{2/7}+c_{4/7})\left\{\I\left[\Li_2(\eta)+\Li_2(\eta^{2})-\Li_2(\eta^{3})\right]-\frac{4\pi  (4 \lambdabar_{1/7}+2\lambdabar_{2/7}-\pL)}{7}\right\}}{3s_{2/7}(5+4c_{2/7})/(1+2c_{2/7})}\\&2&\frac{2c^{2}_{2/7}\big[6\RE\Li_{1,1}(\eta^2,\eta)-3 \lambdabar_{2/7}^2+\big(-2 \lambdabar_{1/7}-4 \lambdabar_{2/7}+\pL\big)^2-\frac{79 \pi ^2}{588}  \big]}{(1+2c_{4/7})(5+4c_{4/7})}&\frac{4c^{2}_{2/7}\big[  6\RE\Li_{1,1}(\eta^2,\eta)-3 \lambdabar_{2/7}^2+\big(-2 \lambdabar_{1/7}-4 \lambdabar_{2/7}+\pL\big)^2-\frac{79 \pi ^2}{588}\big]}{3(5+4c_{4/7})/(1+2c_{4/7})}\\&&{}-\frac{c_{2/7}( 2+c_{4/7} )\left\{\I\left[\Li_2(\eta)+\Li_2(\eta^{2})-\Li_2(\eta^{3})\right]+\frac{2\pi  (\lambdabar_{1/7}-\lambdabar_{2/7})}{7} \right\}}{s_{2/7}(1+2c_{4/7})(5+4c_{4/7})}&{}-\frac{(2c_{4/7}+c_{8/7})\left\{\I\left[\Li_2(\eta)+\Li_2(\eta^{2})-\Li_2(\eta^{3})\right]+\frac{2\pi  (\lambdabar_{1/7}-\lambdabar_{2/7})}{7}\right\}}{3s_{4/7}(5+4c_{4/7})/(1+2c_{4/7})}\\&3&\frac{2c^{2}_{3/7}\big[  6\RE\Li_{1,1}(\eta^2,\eta)+7 \lambdabar_{1/7}^2-2 \lambdabar_{1/7} \left(\lambdabar_{2/7}+\pL\right)+\frac{ (\pL-2 \lambdabar_{2/7})^2}{4}+\frac{281 \pi ^2}{588}\big]}{(1+2c_{6/7})(5+4c_{6/7})}&\frac{4c^{2}_{3/7}\big[  6\RE\Li_{1,1}(\eta^2,\eta)+7 \lambdabar_{1/7}^2-2 \lambdabar_{1/7} \left(\lambdabar_{2/7}+\pL\right)+\frac{ (\pL-2 \lambdabar_{2/7})^2}{4}+\frac{281 \pi ^2}{588}\big]}{3(5+4c_{6/7})/(1+2c_{6/7})}\\&&{}-\frac{c_{3/7}( 2+c_{6/7} )\left\{-\I\left[\Li_2(\eta)+\Li_2(\eta^{2})-\Li_2(\eta^{3})\right]+\frac{2\pi  (\pL-2 \lambdabar_{1/7}-4 \lambdabar_{2/7})}{7} \right\}}{s_{3/7}(1+2c_{6/7})(5+4c_{6/7})}&-\frac{(2c_{6/7}+c_{12/7})\left\{-\I\left[\Li_2(\eta)+\Li_2(\eta^{2})-\Li_2(\eta^{3})\right]+\frac{2\pi  (\pL-2 \lambdabar_{1/7}-4 \lambdabar_{2/7})}{7}\right\}}{3s_{6/7}(5+4c_{6/7})/(1+2c_{6/7})}\\\hline8&1&\frac{\big(5 \sqrt{2}-4\big)\left[6\RE\Li_{1,1}(i,\theta )-\frac{20 \widetilde{\lambda }^2-4 \lambda  \widetilde{\lambda }+5 \lambda ^2}{16} +\frac{5 \pi ^2}{192}\right]}{34} +\frac{\big(3 \sqrt{2}-16\big)\left( \frac{3 G}{2}+\frac{5 \pi  \widetilde{\lambda }}{4} \right)}{34} &\frac{\big(7 \sqrt{2}+8\big)\left[6\RE\Li_{1,1}(i,\theta )-\frac{20 \widetilde{\lambda }^2-4 \lambda  \widetilde{\lambda }+5 \lambda ^2}{16} +\frac{5 \pi ^2}{192}\right]}{51} -\frac{2\big(3 \sqrt{2}+1\big)\left( \frac{3 G}{2}+\frac{5 \pi  \widetilde{\lambda }}{4} \right)}{51}  \\[3pt]&3&-\frac{\big(5 \sqrt{2}+4\big)\left[-6\RE\Li_{1,1}(i,\theta )+\frac{4 \widetilde{\lambda }^2-4 \lambda  \widetilde{\lambda }+ 13\lambda ^2}{16} +\frac{23 \pi ^2}{192}\right]}{34}+\frac{\big(3 \sqrt{2}+16\big)\left( - \frac{3 G}{2}+\frac{\pi  \widetilde{\lambda }}{4} \right)}{34} &-\frac{\big(7 \sqrt{2}-8\big)\left[-6\RE\Li_{1,1}(i,\theta )+\frac{4 \widetilde{\lambda }^2-4 \lambda  \widetilde{\lambda }+ 13\lambda ^2}{16} +\frac{23 \pi ^2}{192}\right]}{51}-\frac{2\big(3 \sqrt{2}-1\big)\left(- \frac{3 G}{2}+\frac{\pi  \widetilde{\lambda }}{4} \right)}{51} \\[3pt]\hline9&1&\frac{2c^{2}_{1/9}\big[ 6 \RE\Li_{1,1}\big(\qoppa ^2,\qoppa \big)+\lambdabar_{1/9} \big(3 \varLambda -8 \lambdabar_{2/9}\big)-3 \lambdabar_{2/9} \varLambda +\lambdabar_{1/9}^2+4 \lambdabar_{2/9}^2+\frac{13 \pi ^2}{108} \big]}{(1+2c_{2/9})(5+4c_{2/9})}&\frac{4c^{2}_{1/9}\big[ 6 \RE\Li_{1,1}\big(\qoppa ^2,\qoppa \big)+\lambdabar_{1/9} \big(3 \varLambda -8 \lambdabar_{2/9}\big)-3 \lambdabar_{2/9} \varLambda +\lambdabar_{1/9}^2+4 \lambdabar_{2/9}^2+\frac{13 \pi ^2}{108} \big]}{3(5+4c_{2/9})/(1+2c_{2/9})}\\&&{}-\frac{c_{1/9}( 2+c_{2/9} )\left\{\I\left[\Li_2(\qoppa )+\Li_2\big(\qoppa ^2\big)-\Li_2(\omega )\right]+\frac{2 \pi(\varLambda-2  \lambdabar_{1/9})}{3}\right\}}{s_{1/9}(1+2c_{2/9})(5+4c_{2/9})}&{}-\frac{(2c_{2/9}+c_{4/9})\left\{\I\left[\Li_2(\qoppa )+\Li_2\big(\qoppa ^2\big)-\Li_2(\omega )\right]+\frac{2 \pi(\varLambda-2  \lambdabar_{1/9})}{3}\right\}}{3s_{2/9}(5+4c_{2/9})/(1+2c_{2/9})}\\&2&\frac{2c^{2}_{2/9}\big[ -6 \RE\Li_{1,1}\big(\omega,\qoppa ^{2}\big)-5 \lambdabar_{2/9} \varLambda -4 \lambdabar_{1/9} \big(\varLambda -4 \lambdabar_{2/9}\big)+4 \lambdabar_{1/9}^2+13 \lambdabar_{2/9}^2+\varLambda ^2-\frac{17 \pi ^2}{108} \big]}{(1+2c_{4/9})(5+4c_{4/9})}&\frac{4c^{2}_{2/9}\big[ -6 \RE\Li_{1,1}\big(\omega,\qoppa ^{2}\big)-5 \lambdabar_{2/9} \varLambda -4 \lambdabar_{1/9} \big(\varLambda -4 \lambdabar_{2/9}\big)+4 \lambdabar_{1/9}^2+13 \lambdabar_{2/9}^2+\varLambda ^2-\frac{17 \pi ^2}{108} \big]}{3(5+4c_{4/9})/(1+2c_{4/9})}\\{}&&{}-\frac{c_{2/9}( 2+c_{4/9} )\left\{\I\left[-\Li_2(\qoppa )+2\Li_2\left(\qoppa ^2\right)+\frac{4}{3}\Li_2(\omega )\right]+\frac{\pi  (\varLambda -2 \lambdabar_{2/9})}{3} \right\}}{s_{2/9}(1+2c_{4/9})(5+4c_{4/9})}&{}-\frac{( 2c_{4/9}+c_{8/9} )\left\{\I\left[-\Li_2(\qoppa )+2\Li_2\left(\qoppa ^2\right)+\frac{4}{3}\Li_2(\omega )\right]+\frac{\pi  (\varLambda -2 \lambdabar_{2/9})}{3} \right\}}{3s_{4/9}(5+4c_{4/9})/(1+2c_{4/9})}\\&4&\frac{2c^{2}_{4/9}\big[ 6 \RE\Li_{1,1}\left(\omega,\qoppa\right)-\lambdabar_{2/9} \varLambda +\lambdabar_{1/9} \big(4 \lambdabar_{2/9}-5 \varLambda \big)+7 \lambdabar_{1/9}^2+\lambdabar_{2/9}^2+\frac{\varLambda ^2}{4}+\frac{19 \pi ^2}{36} \big]}{(1+2c_{8/9})(5+4c_{8/9})}&\frac{4c^{2}_{4/9}\big[ 6 \RE\Li_{1,1}\left(\omega,\qoppa\right)-\lambdabar_{2/9} \varLambda +\lambdabar_{1/9} \big(4 \lambdabar_{2/9}-5 \varLambda \big)+7 \lambdabar_{1/9}^2+\lambdabar_{2/9}^2+\frac{\varLambda ^2}{4}+\frac{19 \pi ^2}{36} \big]}{3(5+4c_{8/9})/(1+2c_{8/9})}\\{}&&{}-\frac{c_{4/9}( 2+c_{8/9} )\left\{\I\left[-2\Li_2(\qoppa )+\Li_2\left(\qoppa ^2\right)-\frac{2}{3}\Li_2(\omega )\right]-\frac{2\pi  (\lambdabar_{1/9}+\lambdabar_{2/9})}{3}   \right\}}{s_{4/9}(1+2c_{8/9})(5+4c_{8/9})}&-\frac{( 2c_{8/9}+c_{16/9}  )\left\{\I\left[-2\Li_2(\qoppa )+\Li_2\left(\qoppa ^2\right)-\frac{2}{3}\Li_2(\omega )\right]-\frac{2\pi  (\lambdabar_{1/9}+\lambdabar_{2/9})}{3}   \right\}}{3s_{8/9}(5+4c_{8/9})/(1+2c_{8/9})}\\\hline 10&1&\frac{\big(35-11 \sqrt{5}\big)\left[6\RE\Li_{1,1}\left(\varsigma ,-\frac{1}{\varsigma ^2}\right)+\frac{\mathscr{L}^2-2 \pd  \mathscr{L}-20 \pd ^2}{4} +\frac{31 \pi ^2}{150}\right]}{124}   \vphantom{\frac{\frac{\frac11}\int}1}&\frac{ \big(7 \sqrt{5}+20\big)\left[6\RE\Li_{1,1}\left(\varsigma ,-\frac{1}{\varsigma ^2}\right)+\frac{\mathscr{L}^2-2 \pd  \mathscr{L}-20 \pd ^2}{4}+\frac{31 \pi ^2}{150}\right]}{93}\\[2pt]&&{}+\frac{\sqrt{5+2 \sqrt{5}} \big(29 \sqrt{5}-81\big)\left\{\frac{1}{2}\I\left[\Li_2(\varsigma )+3\Li_2\big(\varsigma ^2\big)\right]+\frac{14 \pi  \pd }{5}\right\}}{124}&{}-\frac{ \sqrt{5+2 \sqrt{5}}\big(19 \sqrt{5}+10\big)\left\{\frac{1}{2}\I\left[\Li_2(\varsigma )+3\Li_2\big(\varsigma ^2\big)\right]+\frac{14 \pi  \pd }{5}\right\}}{465} \\[2pt]&3&\frac{ \big(35+11 \sqrt{5}\big)\left[-6\RE\Li_{1,1}\left(\varsigma ,-\frac{1}{\varsigma ^2}\right)+\frac{\mathscr{L}^2+2 \pd  \mathscr{L}+4 \pd ^2}{4}-\frac{28 \pi ^2}{75} \right]}{124} &-\frac{ \big(7 \sqrt{5}-20\big)\left[-6\RE\Li_{1,1}\left(\varsigma ,-\frac{1}{\varsigma ^2}\right)+\frac{\mathscr{L}^2+2 \pd  \mathscr{L}+4 \pd ^2}{4}-\frac{28 \pi ^2}{75} \right]}{93} \\&&{}-\frac{ \sqrt{5-2 \sqrt{5}}\big(29 \sqrt{5}+81\big)\left\{\frac{1}{2}\I\left[3\Li_2(\varsigma )-\Li_2\big(\varsigma ^2\big)\right]-\frac{2 \pi  \pd}{5} \right\}}{124} &{}+\frac{ \sqrt{5-2 \sqrt{5}}\big(19 \sqrt{5}-10\big)\left\{\frac{1}{2}\I\left[3\Li_2(\varsigma )-\Li_2\big(\varsigma ^2\big)\right]-\frac{2 \pi  \pd}{5} \right\}}{465} \\[3pt]\hline12&1&\frac{\big(3 \sqrt{3}-1\big) \big[6\RE\Li_{1,1}\big(\varrho ,\frac{i}{\varrho }\big)-\frac{\widetilde{\varLambda } (6 \lambda -\widetilde{\varLambda })}{4}+\frac{17 \pi ^2}{48}\big]}{52} &\frac{\big(5 \sqrt{3}+7\big)\big[6\RE\Li_{1,1}\big(\varrho ,\frac{i}{\varrho }\big)-\frac{\widetilde{\varLambda } (6 \lambda -\widetilde{\varLambda })}{4}+\frac{17 \pi ^2}{48}\big]}{39}  \\&&{} -\frac{\big(11 \sqrt{3}+5\big) \big[-\frac{G}{3}+\frac{15\I\Li_2(\omega )}{8}+\frac{3\pi  (\lambda+\widetilde{\varLambda })}{4} \big]}{52} &{}-\frac{\big(\sqrt{3}+17\big)\big[-\frac{G}{3}+\frac{15\I\Li_2(\omega )}{8}+\frac{3\pi  (\lambda+\widetilde{\varLambda })}{4} \big] }{39}  \\[3pt]&5&-\frac{\big(3 \sqrt{3}+1\big) \big[-6\RE\Li_{1,1}\big(\varrho ,\frac{i}{\varrho }\big)+\frac{\widetilde{\varLambda } (6 \lambda +\widetilde{\varLambda })}{4}+\frac{ \pi ^2}{16}\big]}{52} &-\frac{\big(5 \sqrt{3}-7\big)\big[-6\RE\Li_{1,1}\big(\varrho ,\frac{i}{\varrho }\big)+\frac{\widetilde{\varLambda } (6 \lambda +\widetilde{\varLambda })}{4}+\frac{ \pi ^2}{16}\big]}{39}{}\\&&{}+\frac{\big(11 \sqrt{3}-5\big)  \big[-\frac{G}{3}-\frac{15\I\Li_2(\omega )}{8}-\frac{\pi  (\lambda-\widetilde{\varLambda })}{4} \big]}{52} &{}+\frac{\big(\sqrt{3}-17\big) \big[-\frac{G}{3}-\frac{15\I\Li_2(\omega )}{8}-\frac{\pi  (\lambda-\widetilde{\varLambda })}{4} \big]}{39}   \\[3pt]\hline\hline
\end{array}\end{align*}\end{tiny}
\end{table}

\begin{remark}By default,  Au's \texttt{MultipleZetaValues}  package uses  spanning sets for  CMZV spaces that are different from our practices  in Tables \ref{tab:Zk(5)3k}--\ref{tab:Zk(12)3k}. Sometimes, Au's spanning set  may lead to bulky expressions, like the right-hand side of the equation below (cf.\  Table \ref{tab:Zk(10)3k}):{\allowdisplaybreaks\begin{align}
\begin{split}
\RE\Li_{1,1,1} \left(\varsigma ,1,-\frac{1}{\varsigma ^2}\right)={}&\frac{\Li_3\big(\!-\!\frac{1}{4}\big)}{48}+\frac{\Li_3\big(\frac{1}{5}\big)}{16}-\frac{\Li_3\left(\frac{3-\sqrt{5}}{4} \right)}{24} -\frac{5 \Li_3\big(\sqrt{5}-2\big)}{72}+\frac{53\Li_3\left(\frac{\sqrt{5}-1}{2} \right)}{400} \\&{}-\frac{\Li_3\left(\frac{\sqrt{5}-1}{4} \right)}{8} -\frac{\Li_3\left(\frac{1}{\sqrt{5}}\right)}{2}-\frac{\Li_3\big(2 \sqrt{5}-4\big)}{24} -\frac{119 \zeta (3)}{750}\\&{}+\frac{3\pi  \I\Li_2(\varsigma )}{20} +\frac{3\pi  \I\Li_2\big(\varsigma^{2}\big)}{10} +\frac{5 \lambda ^2 \pd }{48}-\frac{\lambda  \pd ^2}{6}+\frac{1091 \pd ^3}{3600}\\{}&+\frac{\pd  \mathscr{L}^2}{32}-\frac{\pd ^2 \mathscr{L}}{8}-\frac{191 \pi ^2 \pd }{4500}-\frac{17 \pi ^2 \mathscr{L}}{2400}-\frac{\pi ^2 \lambda }{144},
\end{split}
\end{align}}where  $ \varsigma\colonequals e^{2\pi i/5}$, $ \lambda\colonequals\log2$, $ \pd\colonequals\log \frac{1+\sqrt{5}}{2}$, and $ \mathscr L\colonequals\log5$.

One can establish the equivalence between  our explicit CMZV\ formulae  in Tables \ref{tab:Zk(5)3k}--\ref{tab:Zk(12)3k} and the outputs from Au's \texttt{MZIntegrate}  function, by the \texttt{MZExpand} command in the  \texttt{MultipleZetaValues}  package. For example, to verify  the entry for   $\mathsf S_{3,1}\left(\mathsf H_{k-1}-\mathsf H_{3k};\frac{2^3}{3}\right)$ in Table \ref{tab:Zk(6)3k}, one types \begin{quote}\texttt{MZIntegrate[PolyLog[1, (2\^{}3/3)*t\^{}2*(1 - t)]*(Log[1 - t]/(1 - t)), \{t, 0, 1\}] -
  MZExpand[4*ReColoredMZV[6, \{1, 1, 1\}, \{2, 0, 1\}] + Zeta[3]/18 -
    Pi*ImColoredMZV[3, \{2\}, \{1\}] + (Log[3]\^{}2*(3*Log[2] - Log[3]))/6 -    \linebreak  (Pi\^{}2*(3*Log[2] - 2*Log[3]))/18]}\end{quote}in \texttt{Mathematica} and checks that the output is  $0$.
 \eor\end{remark}

 \begin{remark}In Table \ref{tab:3k+1_3k+2}, we represent some special cases for \eqref{eq:Hk-H3k(3k+1)} and \eqref{eq:Hk-H3k(2k-1)} via $ \overline{\mathbb Q}$-linear combinations of CMZVs in $ \mathfrak Z_2(N)$, where $ N\in\{4,5,6,7,8,9,10,12\}$. Again, the \texttt{MZExpand} command is instrumental during the construction of these tabulated entries. \eor\end{remark}\begin{table}[t]\caption{Selected CMZV characterizations of  $\mathsf S_{4,r}\big(\mathsf H_k;z\big)$ at levels $8$ and $12$, where $ \omega\colonequals e^{2\pi i/3}$,  $ \varrho\colonequals e^{\pi i/3}$,   $ \theta\colonequals e^{\pi i/4}$,  $\lambda\colonequals\log2 $, $\varLambda\colonequals\log3 $, $\widetilde \lambda\colonequals\log\big(1+\sqrt{2}\big)$,  $ \widetilde\varLambda\colonequals\log\big(2+\sqrt{3}\big)$, and $ G\colonequals\I\Li_2(i)$\label{tab:Zk(8,12)4k}}

\begin{scriptsize}\begin{align*}\begin{array}{@{}c@{}l}\hline\hline \vphantom{\frac{\frac\int\int}{\int}}&{}\mathsf S_{4,1}\big(\mathsf H_k,2^4\big)=2\int_0^1\left[\Li_2\left(\big[2^{2}t(1-t)\big]^{2}\right)+\frac{\log^2\left( 1-\left[2^{2}t(1-t)\right]^{2}\right)}{2}\right]\frac{\D t}{1-t}\\{}={}&-384 \RE\left[\Li_{1,1,1}(i,1,\theta )+\Li_{1,1,1}(i,1,-\theta )\right]-196 \zeta (3)-256 \widetilde{\lambda } \RE\Li_{1,1}(i,\theta ) +48 \pi  G\\&{}+2 \big(12 \lambda ^2 \widetilde{\lambda }-4 \lambda  \widetilde{\lambda }^2+16 \widetilde{\lambda }^3-7 \lambda ^3\big)-\frac{\pi ^2 \big(4 \widetilde{\lambda }-63 \lambda\big)}{6}\in\mathfrak Z_3(8)\\[3pt]\hline
\vphantom{\frac{\frac1\int}{\int}}&{}\mathsf S_{4,1}\big(\mathsf H_k,2^2\big)=2\int_0^1\left[\Li_2\left([2t(1-t)]^{2}\right)+\frac{\log^2\left( 1-[2t(1-t)]^{2}\right)}{2}\right]\frac{\D t}{1-t}\\{}={}&48 \RE\left[\Li_{1,1,1}\left(\varrho ,1,\frac{i}{\varrho }\right)+\Li_{1,1,1}\left(\varrho ,1,-\frac{i}{\varrho }\right)\right]-8 \RE\left[\Li_{1,1,1}\left(\omega ,1,\frac{i}{\varrho }\right)+\Li_{1,1,1}\left(\omega ,1,-\frac{i}{\varrho }\right)\right]+\frac{383 \zeta (3)}{9}\\&{}+32 \widetilde{\varLambda } \RE\Li_{1,1}\left(\varrho ,\frac{i}{\varrho }\right)-4 \widetilde{\varLambda } \RE\Li_{1,1}\left(\omega ,\frac{i}{\varrho }\right)-11 \pi  \I\Li_2(\omega )+\frac{4 \pi  G}{3}-\frac{\widetilde{\varLambda }^2 \big(4 \lambda -\widetilde{\varLambda }\big)}{2} +\frac{\pi ^2 \big(-109 \widetilde{\varLambda }+36 \lambda +10 \varLambda \big)}{72} \in\mathfrak Z_3(12) \\[3pt]\hline\hline
\end{array}\end{align*}\end{scriptsize}\end{table}

Thus far, we have concluded our deduction of Corollary \ref{cor:3kCMZV} from Theorem \ref{thm:3kGPL}, which has been accompanied by  some  tabulated special values.
 Next, we will perform a similar service on Theorem \ref{thm:4kGPL}. \begin{proof}[Proof of Corollary \ref{cor:4kCMZV}]The statements follow from routine applications of Au's test to Theorem \ref{thm:4kGPL}(a), upon specializing $x$ to $\frac12$ and $ e^{\pi i/3 }$, respectively.
\end{proof}\begin{remark}For $r=1$, we illustrate  Corollary \ref{cor:4kCMZV} with Table \ref{tab:Zk(8,12)4k}. Apart from these tabulated examples, we also have \begin{align}
\begin{split}&
\sum_{k=1}^{\infty}\frac{\mathsf H_k }{(4k+1)\binom{4k}{2k}}2^{2k}=\int_0^1\frac{\Li_1\left([2t(1-t)]^{2}\right)}{1-[2t(1-t)]^{2}}\D t\\={}&\frac{ \RE\left[4\Li_{1,1}\left(\varrho ,\frac{i}{\varrho }\right)-\Li_{1,1}\left(\omega ,\frac{i}{\varrho }\right)\right]}{\sqrt{3}}+\frac{4 G}{3}-\frac{\lambda  \widetilde{\varLambda }}{\sqrt{3}}+\frac{\widetilde{\varLambda }^2}{8 \sqrt{3}}-\frac{\varLambda  \widetilde{\varLambda }}{4 \sqrt{3}}-\frac{\pi  \lambda }{2}+\frac{29 \pi ^2}{96 \sqrt{3}}
\end{split}\intertext{and}\begin{split}&
\sum_{k=1}^{\infty}\frac{\mathsf H_k }{(4k+3)\binom{4k}{2k}}2^{2k}=\int_0^1\frac{\Li_1\left([2t(1-t)]^{2}\right)}{1-[2t(1-t)]^{2}}[16 t^2 (1-t)-1]\D t\\={}&-\frac{ 5\RE\left[4\Li_{1,1}\left(\varrho ,\frac{i}{\varrho }\right)-\Li_{1,1}\left(\omega ,\frac{i}{\varrho }\right)\right]}{\sqrt{3}}+4 G+\frac{5 \lambda  \widetilde{\varLambda }}{\sqrt{3}}-\frac{5 \widetilde{\varLambda }^2}{8 \sqrt{3}}+\frac{5 \varLambda  \widetilde{\varLambda }}{4 \sqrt{3}}-\frac{3 \pi  \lambda }{2}-\frac{145 \pi ^2}{96 \sqrt{3}}
\end{split}
\end{align}as special cases of Theorem \ref{thm:4kGPL}(b).
\eor\end{remark}

\end{document}